\newsavebox\myboxA
\newsavebox\myboxB
\newlength\mylenA
\newcommand*\xoverline[2][0.75]{%
    \sbox{\myboxA}{$\m@th#2$}%
    \setbox\myboxB\null% Phantom box
    \ht\myboxB=\ht\myboxA%
    \dp\myboxB=\dp\myboxA%
    \wd\myboxB=#1\wd\myboxA% Scale phantom
    \sbox\myboxB{$\m@th\overline{\copy\myboxB}$}%  Overlined phantom
    \setlength\mylenA{\the\wd\myboxA}%   calc width diff
    \addtolength\mylenA{-\the\wd\myboxB}%
    \ifdim\wd\myboxB<\wd\myboxA%
       \rlap{\hskip 0.5\mylenA\usebox\myboxB}{\usebox\myboxA}%
    \else
        \hskip -0.5\mylenA\rlap{\usebox\myboxA}{\hskip 0.5\mylenA\usebox\myboxB}%
    \fi}
\newcommand*\xunderline[2][0.75]{%
    \sbox{\myboxA}{$\m@th#2$}%
    \setbox\myboxB\null% Phantom box
    \ht\myboxB=\ht\myboxA%
    \dp\myboxB=\dp\myboxA%
    \wd\myboxB=#1\wd\myboxA% Scale phantom
    \sbox\myboxB{$\m@th\underline{\copy\myboxB}$}%  Overlined phantom
    \setlength\mylenA{\the\wd\myboxA}%   calc width diff
    \addtolength\mylenA{-\the\wd\myboxB}%
    \ifdim\wd\myboxB<\wd\myboxA%
       \rlap{\hskip 0.5\mylenA\usebox\myboxB}{\usebox\myboxA}%
    \else
        \hskip -0.5\mylenA\rlap{\usebox\myboxA}{\hskip 0.5\mylenA\usebox\myboxB}%
    \fi}
\newcommand{\overbar}[1]{\mkern 1.5mu\overline{\mkern-1.5mu#1\mkern-1.5mu}\mkern 1.5mu}
\def\Z{\mathbb{Z}}
\def\into{\hookrightarrow}
\def\onto{\twoheadrightarrow}
\numberwithin{equation}{subsection}
\theoremstyle{plain} %% This is the default, anyway
\newtheorem{thm}[equation]{Theorem}
\newcounter{intro}
\newtheorem{introthm}[intro]{Theorem}
\newtheorem{introprop}[intro]{Proposition}
\newtheorem*{introthm*}{Theorem}
\newtheorem{cor}[equation]{Corollary}
\newtheorem{cor*}{Corollary}
\newtheorem{lem}[equation]{Lemma}
\newtheorem{lemma}[equation]{Lemma}
\newtheorem{prop}[equation]{Proposition}
\theoremstyle{definition}
\newtheorem{defn}[equation]{Definition}
\newtheorem{ex}[equation]{Example}
\newcommand{\numberseries}{\bfseries}   %Fontseries used for numbering
\newlength{\thmtopspace}                %Space above theorem
\newlength{\thmbotspace}                %Space below theorem
\newlength{\thmheadspace}               %Space after theorem label
\newlength{\thmindent}                  %For indenting
\renewcommand{\subparagraph}{\vspace*{\thmbotspace}}
\newtheoremstyle{fixed bf head,slanted body}
                {\thmtopspace}{\thmbotspace}{\slshape}
                {\thmindent}{\bfseries}{.}{\thmheadspace}
                {{\numberseries \thmnumber{#2\;}}\thmname{#1}\thmnote{
                    (#3)}}
\newtheoremstyle{it head,upright body}
                {\thmtopspace}{\thmbotspace}
                {\upshape}{\thmindent}{\upshape}{.}{\thmheadspace}
                {{\numberseries\thmnumber{#2\;}}
                {\itshape\thmnote{\negthickspace#3}}}
\theoremstyle{it head,upright body}
\theoremstyle{remark}
\newtheorem*{rems}{Remark}
\newtheorem*{remss}{Remarks}
\newtheorem*{quest}{Question}
\newtheorem*{ack}{Acknowledgements}
\newcommand{\Tor}[4]{\operatorname{Tor}^{#2}_{#1}(#3,#4)}
\newcommand{\Hom}[3]{\operatorname{Hom}_{#1}(#2,#3)}
\newcommand{\xra}[1]{\xrightarrow{#1}}
\newcommand{\xla}[1]{\xleftarrow{#1}}
\newcommand{\la}{\leftarrow}
\newcommand{\pd}{\operatorname{pd}}
\newcommand{\cone}[1]{\operatorname{cone}(#1)}
\newcommand{\fm}{\mathfrak{m}}
\newcommand{\fn}{\mathfrak{n}}
\newcommand{\cA}{\mathcal{A}}
\renewcommand{\t}[1]{\overbar{#1}}
\newcommand{\slte}[1]{\preccurlyeq #1}
\renewcommand{\Bar}[1]{\operatorname{Bar} #1}
\newcommand{\dhom}{d^{\operatorname{Hom}}}
\renewcommand{\o}[1]{\xoverline{#1}}
\newcommand{\p}[1]{{#1}_+}
\newcommand{\Ai}{$\text{A}_{\infty}$}
\newcommand{\A}[1]{$\text{A}_{#1}$}
\newcommand{\Aia}{$\text{A}_{\infty} \, \, A$}
\renewcommand{\u}[1]{\xunderline{#1}}
\newcommand{\LH}{Lef\'evre-Hasegawa }
\newcommand{\ponser}[2]{\operatorname{P}^{#1}_{#2}(t)}
\newcommand{\syz}[3][1]{\Omega^{#1}_{#2}(#3)}
\newcommand{\csyz}[3][1]{\operatorname{syz}_{#2}^{#1}(#3)}
\newcommand{\msyz}[1]{m^{\operatorname{syz}}_{#1}}
\newcommand{\cls}[1]{\operatorname{cls}(#1)}
\begin{document}
% \title{Infinite free resolutions from higher homotopies}
\title{Higher homotopies and Golod rings}
\author{Jesse Burke}
\address{Mathematics Department\\ 
UCLA\\ 
Los Angeles, CA\\ 90095-1555, USA}
\email{jburke@math.ucla.edu}

\begin{abstract}
We study the
homological algebra of an $R = Q/I$-module $M$ using \Ai-structures on
$Q$-projective resolutions of $R$ and $M$. We use these higher
homotopies to construct an $R$-projective bar resolution
of $M$, $Q$-projective
resolutions for all $R$-syzygies of $M$, and describe the differentials in the Avramov spectral
sequence for $M$. These techniques apply particularly well to Golod
modules over local rings. We characterize $R$-modules that are Golod
over $Q$ as those with minimal
\Ai-structures. This gives a construction of the minimal resolution of
every module over a Golod ring, and it also follows that if the inequality traditionally used to define Golod modules is an equality in the first
$\dim Q$+1 degrees, then the module is Golod, where no bound was
previously known. We also relate \Ai-structures on resolutions to Avramov's
obstructions to the existence of a dg-module structure. Along the way
we give new, shorter, proofs of several classical
results about Golod modules.
\end{abstract}

\maketitle
\newcommand{\env}[1]{{#1}^{\operatorname{ev}}}
\section{Introduction}
Let $R$ be an algebra over a commutative ring $Q$, and $M$ an
$R$-module. If $R$ and $M$ are projective as $Q$-modules, the bar
resolution gives a canonical $R$-projective resolution of $M$. To generalize to the case that $R$ or $M$ is not projective
over $Q$, we have to replace $R$, respectively $M$, by a $Q$-projective resolution
with an algebra, respectively module, structure. Indeed, let us assume that $R = Q/I$ is a cyclic $Q$-algebra,
a case of interest in commutative
algebra. In this case, Iyengar showed in \cite{MR1442152} that if $A
\xra{\simeq} R$ is a $Q$-projective resolution with a differential
graded (dg) algebra structure, and $G
\xra{\simeq} M$ is $Q$-projective resolution with a dg $A$-module
structure, then the bar resolution on
$A$ and $G$ gives an $R$-projective resolution of
$M$. Recall that a differential graded algebra/module, is an
algebra/module-over-that-algebra object in the category
of $Q$-chain complexes.

In this paper we generalize Iyengar's result by constructing an
$R$-projective resolution of $M$ using \Ai, or
``dg up-to-coherent-homotopy'', structures on $Q$-projective
resolutions. The distinction of \Ai-machinery is that every $Q$-projective
resolution of $R$ has an \Ai-algebra structure and every
$Q$-projective resolution of $M$ has an \Aia-module
structure, by \cite[\S 2.4]{Burke14}\footnote{This follows the
  classical spirit of \Ai-machinery, but because $A$ is not augmented and the ground ring is a
  not a field, it does not follow from arguments in the  literature.}. This is particularly useful in the study of local
rings, where we are interested in minimal resolutions. Avramov showed
that the minimal $Q$-free resolution of $R$ need not have a dg-algebra
structure, or if it does, the minimal $Q$-free resolution of $M$ need
not have a dg-module structure over it \cite{MR601460}. While $Q$-free
resolutions with dg structures
always exist, the dg bar resolution on a non-minimal
$Q$-resolution is non-minimal over $R$. Thus using \Ai-structures
removes a non-vanishing obstruction to a dg bar resolution being
minimal. 

When $Q$ is local Noetherian and $M$ is finitely generated, there is a
well-known upper bound on the size of a minimal $R$-free resolution of
$M$, called the Golod bound. The \Ai-bar resolution on minimal $Q$-free resolutions
realizes this bound. When the bound is
achieved $M$ is by definition a Golod module over $Q$. Summarizing the
previous two paragraphs, we have the following.
\begin{introthm}\label{introthm1}
Let $Q$ be a commutative ring, $R$ a cyclic $Q$-algebra, and $M$
an $R$-module. Let $A$ and $G$ be $Q$-projective
resolutions of $R$ and $M$, respectively. There is an $R$-projective bar resolution of
$M$ built from $A, G$ and \Ai-structures on these resolutions. If $Q$ is local Notherian and $M$ is finitely generated and $A, G$ are minimal, the bar resolution is
minimal if and only if $M$ is a
Golod module over $Q$.
\end{introthm}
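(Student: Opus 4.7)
First I would invoke \cite[\S 2.4]{Burke14} to equip $A$ with an \Ai-algebra structure $(m_n)_{n \geq 1}$ augmenting to the multiplication on $R$, and $G$ with a compatible \Ai-module structure $(m_n^G)_{n \geq 1}$ over $A$ augmenting to the $R$-action on $M$. Write $\overline{A} = A_{\geq 1}$. I would then define
\[
B \;=\; R \otimes_Q T^c(s\overline{A}) \otimes_Q G
\]
as an $R$-free module, equipped with the twisted differential $\partial_B$ whose components are the internal pieces $m_1, m_1^G$ on individual factors together with the higher operations $m_n, m_n^G$ ($n \geq 2$) applied to consecutive subwords of the tensor string, with Koszul signs from the suspension. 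The \Ai-axioms for $(A, G)$ translate exactly into the identity $\partial_B^2 = 0$, and when the structures happen to be strictly dg the construction recovers Iyengar's dg bar resolution of \cite{MR1442152}.

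Next I would prove $B \to M$ is a resolution. Filter $B$ by the number of $s\overline{A}$ tensor factors; the higher operations $m_n, m_n^G$ with $n \geq 2$ strictly decrease this word length, so the associated-graded differential retains only $m_1$ on $\overline{A}$ and $m_1^G$ on $G$. Since $\overline{A}$ sits inside a $Q$-projective resolution of $R$ with $A_0 = Q$ and $G$ resolves $M$, the usual bar-spectral-sequence computation collapses the $E^1$-page to $M$ concentrated in homological degree zero, yielding $\rH_0(B) = M$ and $\rH_i(B) = 0$ for $i > 0$; alternatively one can transport acyclicity from the dg case by choosing an \Ai-quasi-isomorphism between $(A,G)$ and a strict dg replacement. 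This acyclicity step is the principal technical point: the higher $m_n$ interact with the filtration in a way that forces one to use the \Ai-axioms in full generality rather than the strict associativity available in \cite{MR1442152}.

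For the minimality statement, assume $Q$ is local Noetherian with maximal ideal $\fm$, $M$ is finitely generated, and $A, G$ are minimal $Q$-free resolutions, so that $\rank_Q \overline{A}_i = \beta^Q_i(R)$ for $i \geq 1$ and $\rank_Q G_j = \beta^Q_j(M)$. A direct enumeration of tensor words in $T^c(s\overline{A})$ gives
\[
\sum_{n \geq 0} (\rank_R B_n)\, t^n \;=\; \frac{\ponser{Q}{M}}{1 - t\bigl(\ponser{Q}{R} - 1\bigr)},
\]
which is precisely the classical Golod upper bound on $\ponser{R}{M}$. Because $B$ is an $R$-free resolution of $M$, one has $\beta^R_n(M) \leq \rank_R B_n$ for every $n$, with equality for all $n$ if and only if $B$ coincides with the minimal $R$-free resolution (any non-minimal free resolution splits off a trivial summand of the form $R \xra{\cong} R$). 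Therefore $B$ is minimal if and only if $\ponser{R}{M}$ equals the Golod bound, i.e.\ if and only if $M$ is a Golod module over $Q$.
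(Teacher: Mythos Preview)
Your proposal is correct and follows essentially the same approach as the paper. The paper constructs the bar resolution exactly as you do (Theorem~\ref{proj_resln_thm}), proves acyclicity either by citing \cite[\S 7]{Burke14} or by the same filtration-by-word-length spectral sequence you sketch, and derives the minimality characterization (Lemma~\ref{golod_bound_lemma} and Theorem~\ref{char_of_golod_mods}) by the identical Poincar\'e-series comparison with the Golod bound that you give.
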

In particular the minimality of the bar resolution does not depend on the \Ai-structures
chosen. 

The Golod bound first appeared in \cite{MR0138667}, and has been well-studied since. In particular it is known that
Golod modules occur with some frequency. To make this precise, we
recall some terminology. The map $Q \to R$ is a Golod
  morphism when the
residue field of $R$ is a Golod module over $Q$. The ring $R$ is a
Golod ring
if there is a Golod morphism $Q \to R$ with $Q$ regular local. Levin
showed that for a local ring $(Q, \fn)$, the map $Q \to Q/\fn^k$ is Golod for all $k \gg 0$
\cite[3.15]{MR0429868}. This was expanded upon by Herzog, Welker and
Yassemi, who showed that if $Q$ is regular and $I$ is any ideal, then $Q/I^k$ is Golod for all $k \gg 0$
\cite{1108.5862}, and more recently by Herzog and
Huneke \cite[Theorem 2.3]{MR3091800} who showed that if $Q$ is a standard graded polynomial algebra over a field of
characteristic zero and $I$ is any graded ideal, then $R = Q/I^k$ is a
Golod ring for all $k \geq 2$ (there is an obvious analogue of
Golodness for standard graded rings and graded modules over them). Finally, Lescot showed that the $\dim
Q$-th syzygy of every module over a Golod ring is a Golod module
\cite{MR1060830}. 

The interpretation of Golodness in terms of \Ai-structures in
Theorem \ref{introthm1} is quite useful.  The first new result is the following.
\begin{cor*}\label{introcor1}
If $Q$ is regular and the Golod bound is an equality in the first $\dim Q + 1$-degrees,
then $M$ is a Golod module.
\end{cor*}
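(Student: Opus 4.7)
The plan is to reduce the statement to Theorem~\ref{introthm1} via a degree count enabled by the regularity of $Q$. Fix minimal $Q$-free resolutions $A$ of $R$ and $G$ of $M$, each equipped with an \Ai-structure; write $c = \dim Q$ and let $\fn$ denote the maximal ideal of $Q$. By Theorem~\ref{introthm1}, the resulting $R$-free bar resolution $B$ of $M$ has ranks matching the Golod upper bound term-by-term and is $R$-minimal precisely when $M$ is Golod over $Q$. The corollary will follow once we show that if the Golod bound is saturated in the first $c+1$ positive degrees, then $B$ is $R$-minimal.

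All differentials of $A$ and $G$ have entries in $\fn$ by minimality, so the failure of $B$ to be $R$-minimal is equivalent to the existence of some higher \Ai-operation $m_l$ (with $l \geq 2$), either on $A$ or as an $A$-module operation on $G$, whose matrix in the chosen bases has an entry that is a unit in $Q$.

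The crucial input is that regularity of $Q$ forces $A$ and $G$ to be concentrated in homological degrees $[0, c]$. For $m_l^A(a_{i_1}, \ldots, a_{i_l}) \in A_{i_1 + \cdots + i_l + l - 2}$ with each $i_s \geq 1$, a unit entry requires the target degree to be at most $c$, so $i_1 + \cdots + i_l + l \leq c + 2$. Tracing this through the bar construction, such a unit entry produces a unit term in the bar differential $d_n \colon B_n \to B_{n-1}$ for some $n \leq c + 2$, realised by the bar summand built from the $l$ input factors together with a generator of $G_0$; an analogous count for $m_l^G$ yields the bound $n \leq c + 1$. Hence any non-minimality of $B$ must already be witnessed by a unit entry of $d_n$ for some $n \leq c + 2$.

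To conclude, suppose the Golod bound is an equality in degrees $1, \ldots, c+1$, so that $\beta_n^R(M)$ equals the rank of $B_n$ there. A unit entry in $d_n$ would permit cancellation of summands in both $B_n$ and $B_{n-1}$, strictly reducing $\beta_{n-1}^R(M)$ below the bar rank; hence $d_n$ has no unit entries for $n \leq c + 2$. By the degree bound above this rules out unit entries in every higher \Ai-operation, so $B$ is $R$-minimal, and Theorem~\ref{introthm1} identifies $M$ as Golod. The main technical obstacle I anticipate is verifying the non-cancellation claim: confirming that a unit in a given $m_l$ contributes a genuine unit entry to $d_n$ rather than being masked by cross-terms from other \Ai-operations. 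This should follow because distinct \Ai-operations act on distinct tensor-factor slots of the bar construction and so contribute to disjoint matrix entries of $d_n$.
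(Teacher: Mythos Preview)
Your argument is correct and matches the paper's proof (Corollary~\ref{new_bound_for_golod_mods}) essentially step for step: equality of ranks through degree $c+1$ forces $d_n$ to be minimal for $n\le c+2$, and the degree count shows every component $m_l,\,m_l^G$ already occurs as a block of some such $d_n$, hence all are minimal. Your anticipated obstacle is not one: the bar differential is a block matrix whose blocks are precisely the maps $1^{\otimes a}\otimes m_l\otimes 1^{\otimes b}$ (distinct source/target summands for distinct operations and positions), so $d_n\otimes k=0$ is equivalent to each contributing $m_l\otimes k$ and $m_l^G\otimes k$ vanishing---this is exactly the observation the paper invokes from the proof of Theorem~\ref{char_of_golod_mods}.
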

There was no previous bound of this type known and the proof follows
surprisingly easily from the structure of the bar resolution. Indeed, an
\Ai-structure on $A$, respectively \Aia-module structure on $G$, is
given by a sequence of maps 
\[\widetilde m_n: A_{\geq 1}^{\otimes n} \to A_{\geq 1} \quad \text{respectively} \quad \widetilde
m_n^G: A_{\geq 1}^{\otimes n-1} \otimes G \to G\]
for all $n \geq 1$, where $\widetilde m_n$
has degree $2n - 1$ and $\widetilde m_n^G$ has degree $2n - 2$.  Since we
assume $Q$ is regular, $A$ and $G$ have length at most $\dim Q$. By degree considerations there are only finitely many
non-zero maps in any
\Ai-structure on $A$ or $G$, and by construction these all appear as
direct summands in the first $\dim Q +
1$-degrees of the bar resolution of $M$. Thus if the bar resolution is
minimal in these degrees, it must be minimal in all degrees. Corollary
\ref{introcor1} follows almost immediately.

Using the other direction of Theorem A, if $M$ is a Golod module, the bar resolution is a minimal
$R$-free resolution constructed from the finite data of $A, G$ and
their higher homotopy maps. Coupled with Lescot's result mentioned
above that every module over a Golod ring has a syzygy that is a Golod
module, we have the following.
\begin{cor*}\label{introcor2}
If $R$ is a Golod ring, there is a finite construction of the minimal
free resolution of every finitely generated $R$-module.
\end{cor*}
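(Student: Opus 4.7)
The plan is to combine Lescot's theorem with Theorem \ref{introthm1}. Fix a Golod morphism $Q \to R$ with $Q$ regular local of dimension $d$, and let $M$ be a finitely generated $R$-module. I would first compute the first $d$ steps of a minimal $R$-free resolution $F_\bullet \to M$ --- this is finite data since $R$ is Noetherian and $M$ is finitely generated --- and set $N = \syz[d]{R}{M}$. By Lescot's theorem quoted above, $N$ is a Golod module, i.e.\ it is Golod as a $Q$-module via the Golod morphism $Q \to R$.

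Next I would pick minimal $Q$-free resolutions $A \xra{\simeq} R$ and $G \xra{\simeq} N$. Because $Q$ is regular of dimension $d$, each is a bounded complex of finitely generated free modules concentrated in degrees $[0,d]$, hence specified by finite data. By \cite[\S 2.4]{Burke14} these admit an \Ai-algebra, respectively \Aia-module, structure. The higher multiplications $\widetilde m_n$ and $\widetilde m_n^G$ have degrees $2n-1$ and $2n-2$, and since the underlying complexes live in degrees $[0,d]$, only finitely many of them can be nonzero --- exactly the degree count used in the proof of Corollary \ref{introcor1}. Thus the full \Ai-datum consists of finitely many $Q$-linear maps between finitely generated free modules.

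By Theorem \ref{introthm1} the \Ai-bar resolution built from $(A, G)$ is an $R$-free resolution of $N$, and it is \emph{minimal} because $N$ is Golod and $A, G$ are minimal. Splicing this resolution onto the truncation $F_{d-1} \to \cdots \to F_0 \to M$ from the first step via the inclusion $N \hookrightarrow F_{d-1}$ yields a minimal $R$-free resolution of $M$: the splice map lands in $\fm_R F_{d-1}$ because $N \subseteq \fm_R F_{d-1}$ by minimality of $F_\bullet$. All inputs to the construction --- the finite truncation $F_\bullet$, the bounded complexes $A$ and $G$, and the finitely many nonzero higher-multiplication maps --- are finite, as required.

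The main obstacle is to exhibit an explicit \emph{procedure} producing the higher multiplications from the differentials of $A$ and $G$, not just to know abstractly that they exist. This is supplied by the standard inductive solution of the Stasheff identities used to build \Ai-structures on $Q$-projective resolutions: at each stage one solves a lifting problem against the contractible mapping cone of $A \to R$ (or $G \to N$), and because $A$ and $G$ are bounded, only finitely many such lifting steps are nontrivial before the degree of $\widetilde m_n$ or $\widetilde m_n^G$ exceeds $d$ and all further maps are forced to vanish.
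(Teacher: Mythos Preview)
Your proposal is correct and follows essentially the same approach as the paper: pass to a high enough syzygy $N$ (using Lescot's result), build minimal $Q$-free resolutions $A$ and $G$ with their finitely many nonzero \Ai-structure maps, and invoke Theorem~\ref{introthm1} to conclude the bar resolution of $N$ is minimal. You are somewhat more explicit than the paper about the splicing step and about why only finitely many higher multiplications are nonzero, but the structure of the argument is the same.
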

This adds a large new class of rings for which an explicit
construction of minimal resolutions exist. And let us emphasize this is a construction. Finding the
non-zero maps $\widetilde m_n$ and $\widetilde m_n^G$ in \Ai-structures
on $A$ and $G$ can be implemented on a
computer, using e.g.\ \cite{M2} (although it is currently only feasible for
small examples). We also give a new proof
of Lescot's result in \ref{syzs_over_golod_ring_are_golod}, using techniques developed earlier in
the paper, so Corollary \ref{introcor2} is self-contained. 

In addition to the the bar resolution, we introduce two other constructions that use \Ai-structures to
study $R$-modules. To describe the first, we let $\syz R M$ be the
first syzygy of $M$ over $R$.
\begin{introprop}\label{introthm2}
Let $Q$ be a commutative ring, $R$ a cyclic $Q$-algebra, and $M$
an $R$-module. Let $A$ and $G$ be $Q$-projective
resolutions of $R$ and $M$ with \Ai algebra and \Aia-module
structures, respectively. There is a complex $\csyz R G$, built from
$A, G$ and \Ai-structures on these complexes, with a canonical
\Aia-module structure and a quasi-isomorphism
\[ \csyz R G \xra{\simeq} \syz R M.\]
Thus $\csyz R G$ is a $Q$-projective
resolution of the
  first $R$-syzygy of $M$.
\end{introprop}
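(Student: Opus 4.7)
The plan is to construct $\csyz R G$ as a natural $Q$-projective complex produced from $A$, $G$, and their \Ai-structures in a manner parallel to the bar resolution of Theorem~\ref{introthm1}, and then to establish the quasi-isomorphism $\csyz R G \xra{\simeq} \syz R M$ via a long-exact-sequence argument.

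\textbf{Construction.} The bar resolution of Theorem~\ref{introthm1} has the form $B = R \otimes_Q T(A_{\geq 1}[1]) \otimes_Q G$, with a twisted differential built from the \Ai-products $m_n$ on $A$, the \Aia-module operations $m_n^G$ on $G$, and the augmentation $A \onto R$. The natural candidate for $\csyz R G$ is a $Q$-projective complex that replaces the $R$-prefactor of $B$ by an additional $A_{\geq 1}$-factor, so that the bar differential still closes on it:
\[
  \csyz R G := A_{\geq 1} \otimes_Q T(A_{\geq 1}[1]) \otimes_Q G,
\]
with a differential induced from that of $B$ by reinterpreting the contraction of the $R$-factor through $A \onto R$ as a sequence of \Ai-contractions on the leftmost $A_{\geq 1}$-factors. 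Since $A_{\geq 1}$ and $G$ are $Q$-projective, this is a complex of $Q$-projective modules. The canonical \Aia-module structure is inherited from $G$ by letting $A$ act on the rightmost $G$-factor, with the \Ai-relations following tautologically from those of the \Aia-module structure on $G$.

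\textbf{Quasi-isomorphism.} To show $\csyz R G \xra{\simeq} \syz R M$, I would fit $\csyz R G$ into a short exact sequence of $Q$-complexes relating it to the bar resolution $B$ and an auxiliary term such as $R \otimes_Q G$ or $G$, obtained from the bar-degree filtration. The associated long exact sequence in homology, combined with Theorem~\ref{introthm1}'s identification of $H_\bullet(B)$ with $M$ concentrated in degree $0$, would force $H_0(\csyz R G) \cong \ker(R \otimes_Q G_0 \onto M) = \syz R M$ modulo projective summands, and it would force the higher homologies of $\csyz R G$ to vanish.

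\textbf{Main obstacle.} The crucial subtlety is the case where $R$ is not flat over $Q$, in which case nontrivial terms $\Tor{i}{Q}{R}{M}$ with $i \geq 1$ appear in the long exact sequence and must be absorbed. The higher \Ai-products on $A$ furnish exactly the chain-level corrections to the differential of $\csyz R G$ needed to absorb these Tor terms, echoing the role they play in the Avramov spectral sequence mentioned in the introduction. Verifying this absorption, whether via a spectral-sequence argument on the bar-degree filtration or by constructing an explicit chain homotopy, is the technical heart of the proof.
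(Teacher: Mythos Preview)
Your construction is fundamentally different from the paper's, and in a direction that misses the point. The paper's $\csyz R G$ is not a bar-type object involving the full tensor coalgebra $T(A_{\geq 1}[1])$; it is simply the shifted mapping cone $\cone{\phi}[-2]$ of the single morphism
\[
\phi = s\, m_2^G\big|_{\p A[1] \otimes G_0} : \p A[1] \otimes G_0 \longrightarrow \p G[1].
\]
As a graded $Q$-module this is just $(\p A \otimes G_0) \oplus \p G$, suitably shifted. In particular, when $A$ and $G$ are bounded, so is $\csyz R G$; this finiteness is essential for the later applications (iterating the construction and showing that after $\pd_Q M + 1$ steps the higher module maps $m_n^{\operatorname{syz}}$ involve only the $m_n$ of $A$). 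Your proposed complex $A_{\geq 1} \otimes T(A_{\geq 1}[1]) \otimes G$ would be unbounded even when $A$ and $G$ are bounded, and the differential you describe (``reinterpreting the contraction of the $R$-factor \ldots\ as a sequence of \Ai-contractions'') is not clearly well-defined.

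You have also misidentified the technical heart. In the paper's approach there is no ``absorption of Tor terms'' by higher \Ai-products, and no difficulty arising from $R$ failing to be $Q$-flat. The quasi-isomorphism $\cone{\phi}[-2] \xra{\simeq} \syz R M$ follows from the elementary long exact homology sequence of a mapping cone: extend $\phi$ to $\phi': A[1] \otimes G_0 \to G[1]$ by sending $1_A \otimes g \mapsto g$; then $\cone{\phi}$ and $\cone{\phi'}$ are homotopy equivalent, $H_*(A[1] \otimes G_0) \cong R[1] \otimes G_0$ is concentrated in degree $1$, and $H(\phi')$ is the surjection $R \otimes G_0 \onto M$ (shifted), so the kernel $\syz R M$ drops out immediately. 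Only $m_2^G$ is used in this step. The higher products $m_n, m_n^G$ for $n \geq 3$ enter only when one endows $\csyz R G$ with its \Aia-module structure, and there the paper gives the structure maps explicitly as $2 \times 2$ matrices built from $m_n \otimes 1$, $m_{n+1}^G$, and $m_n^G$, verified by recognizing $\phi$ as the first component of a morphism of \Aia-modules and invoking the cone of such a morphism. Your description ``letting $A$ act on the rightmost $G$-factor, with the \Ai-relations following tautologically'' does not capture this.
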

This is a special case of a well-known procedure for calculating a
$Q$-projective resolution of an $R$-syzygy of $M$ using
a mapping cone. The novelty here is that we do not have to choose a
lift of the multiplication map $R \otimes M \to M$ to $Q$-free
resolutions: it is contained in the
\Ai-structure. And since the new $Q$-resolution has an \Aia-module
structure, one can iterate and construct canonical $Q$-projective resolutions of all $R$-syzygies of
$M$. If $Q$ is local and $A, G$ are minimal, these new resolutions are
not necessarily
minimal, but are for Golod modules. Thus we have a closed
description of the
minimal $Q$-free resolution of every $R$-syzygy of a Golod module, in
terms of \Ai-structures on $A$ and $G$. 

The second construction uses \Ai-structures to describe the
differentials of the Avramov spectral sequence. This spectral sequence, introduced in \cite{MR601460}
using an Eilenberg-Moore type construction, transfers information from
$Q$ to $R$, the opposite direction of the standard change of rings spectral
sequence. Specifically, we have the following.
\begin{introthm}\label{introthm3}
 Let $Q$ be local with residue field $k$ and $A$ and $G$ minimal $Q$-free
 resolutions of $R$ and $M$, respectively. \Ai-structures on $A$
 and $G$ describe the differentials in the
  Avramov spectral sequence for $M$:
\[ E^2_{p,q} = \left ( \Tor p {\Tor * Q R k} {\Tor * Q M k} k \right)_q \Rightarrow \Tor {p+q} R M k.\] 
\end{introthm}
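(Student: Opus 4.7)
The plan is to realize the Avramov spectral sequence as the one associated to a natural bar-length filtration on the $R$-projective $A_\infty$-bar resolution of $M$ from Theorem A. Write $B = B(A, G)$ for that resolution; as a graded $R$-module it is
\[
B \;=\; \bigoplus_{n \geq 0} R \otimes_Q A_{\geq 1}^{\otimes n} \otimes_Q G,
\]
and its differential decomposes as the sum of the internal differentials of $A$ and $G$, the standard bar pieces coming from the multiplication $\widetilde m_2$ and the action $\widetilde m_2^G$, and higher $A_\infty$-contributions $\widetilde m_j, \widetilde m_j^G$ for $j \geq 3$. Each $\widetilde m_j$ or $\widetilde m_j^G$ strictly lowers the bar length by $j - 1$, so the subcomplexes $F_p = \bigoplus_{n \geq p} R \otimes_Q A_{\geq 1}^{\otimes n} \otimes_Q G$ form a decreasing, differential-preserving filtration of $B$.

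Next I would apply $k \otimes_R -$; by Theorem A the resulting complex has homology $\Tor_*^R(M, k)$, and $F_\bullet$ descends to a filtration of $k \otimes_R B$. Examining the associated spectral sequence, on the $E^0$-page only the internal differentials of $A$ and $G$ remain, and because $A$ and $G$ are $Q$-projective, passing to homology yields
\[
E^1_{p,q} \;=\; \bigl(\, H_{\geq 1}(A \otimes_Q k)^{\otimes p} \otimes_k H_*(G \otimes_Q k)\,\bigr)_q,
\]
the $p$-fold tensor of the positive-degree part of $\operatorname{Tor}^Q_*(R, k)$ with $\operatorname{Tor}^Q_*(M, k)$. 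The $A_\infty$-relations force $\operatorname{Tor}^Q_*(R, k)$ to be an associative graded algebra and $\operatorname{Tor}^Q_*(M, k)$ a module over it, and $d^1$, induced by $\widetilde m_2$ and $\widetilde m_2^G$, is exactly the reduced bar differential of this module; taking homology gives the claimed $E^2$-page. For $r \geq 2$, the differential $d^r$ is then induced by those components of $\widetilde m_{r+1}$ and $\widetilde m_{r+1}^G$ that drop bar-length by exactly $r$, which is the content of the theorem.

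It remains to identify this spectral sequence with Avramov's. My plan here is a direct comparison of filtered complexes: Avramov's construction starts from a dg-algebra resolution $A' \to R$ and a dg-module resolution $G' \to M$ and produces a two-sided bar complex with an analogous bar-length filtration. Viewing $A'$ as an $A_\infty$-algebra with vanishing higher operations and similarly for $G'$, the homotopy-transfer framework cited from \cite{Burke14} supplies $A_\infty$-quasi-isomorphisms $A \to A'$ and $G \to G'$ compatible with these structures. These induce a map of the two bar complexes respecting the filtration which is an isomorphism on $E^1$, and the comparison theorem for filtered complexes then shows the two spectral sequences agree from $E^2$ onward.

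The main obstacle will be producing the filtered comparison map rigorously and verifying that the component decomposition of the bar differential really does assemble the $\widetilde m_j$ and $\widetilde m_j^G$ into the claimed $d^r$; both are bookkeeping with the $A_\infty$-axioms, but require care with signs and with the ambiguity in the choice of $A_\infty$-structure, noting that Theorem A guarantees different choices produce quasi-isomorphic bar resolutions and hence, via the standard comparison, isomorphic spectral sequences from the $E^2$-page onward.
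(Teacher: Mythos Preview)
Your overall strategy---filter the bar resolution by bar length and read off the spectral sequence---matches the paper's approach. There are, however, three points to flag.

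First, your filtration is oriented the wrong way. Every component $\widetilde m_j$ or $\widetilde m_j^G$ of the bar differential \emph{lowers} (or preserves) bar length, so the subspace $F_p = \bigoplus_{n \geq p} R \otimes A_{\geq 1}^{\otimes n} \otimes G$ is \emph{not} closed under $d$. The correct filtration is the increasing one $F_p = \bigoplus_{n \leq p}(\cdots)$, which the paper uses. This is easily repaired and does not affect the rest of the argument.

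Second, your description of $d^r$ for $r \geq 2$ as being induced purely by $\widetilde m_{r+1}$ and $\widetilde m_{r+1}^G$ is an oversimplification of how differentials in a filtered spectral sequence work. A class on the $r$th page is represented by an element $x_p$ together with a tail $x_{p-1}, \ldots, x_{p-r+1}$ in lower filtration satisfying successive cycle conditions, and $d^r$ is computed by applying the full differential to this chain. The paper's explicit formula shows that $d^r$ involves all $\overline m_i$ for $1 \leq i \leq r+1$, applied to the various pieces of the tail. Only when the lower $\overline m_i$ already vanish does $d^r$ reduce to $\overline m_{r+1}$ alone.

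Third, the comparison with Avramov's original construction that you outline at the end is not carried out in the paper and is not needed for the stated result. The paper simply constructs the spectral sequence directly from the filtered bar complex, identifies the $E^2$-page and abutment as claimed (via the K\"unneth map and the identification of $d^1$ with the $\pitchfork$-product bar differential), and takes this as the Avramov spectral sequence, following Iyengar's dg treatment. Your proposed comparison via $A_\infty$-quasi-isomorphisms between $A, G$ and dg resolutions $A', G'$ would work in principle, but it is additional machinery beyond what the theorem asserts: the content is the explicit description of the differentials in \emph{this} spectral sequence, not an identification with an independently constructed one.
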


While the precise description of the differentials is technical, as a corollary we can describe the relation of \Ai-structures to
the obstructions constructed in \cite{MR601460}. To state this, assume that
$A$ is a dg-algebra. Avramov
constructed a group which vanishes if $G$ has a dg $A$-module
structure. This is equivalent to the existence of an \Aia-module structure $(\widetilde m_n^G)$ with $\widetilde m_n^G = 0$ for
all $n \geq 3$. Thus if the
obstruction is nonzero, then $\widetilde m_n^G$ is nonzero for some $n \geq
3$ in any 
\Aia-module structure on $G$. But in fact, using Theorem \ref{introthm3} we
show more is true.
\begin{cor*}
If Avramov's obstruction is non-zero, then in any \Aia-module
structure $(\widetilde m_n^G)$ on $G$, $\widetilde m_n^G$ contains a unit for some $n \geq 3$.
\end{cor*}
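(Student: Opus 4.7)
The plan is to combine Theorem \ref{introthm3} with an interpretation of Avramov's obstruction as the first non-vanishing higher differential in his spectral sequence. First I would unwind Theorem \ref{introthm3} in the minimal setting: since $\widetilde m_1 = d$ on both $A$ and $G$ has entries in $\fm$, reducing modulo $\fm$ kills the differential contribution in each $A_\infty$-operation, so only the unit entries of each $\widetilde m_n^G$ survive to induce the higher differentials $d_r$ on the Avramov spectral sequence. In particular, if $\widetilde m_n^G$ contains no unit, then $\widetilde m_n^G \otimes_Q k = 0$ and its contribution to every $d_r$ vanishes.

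Next I would identify the obstruction class of \cite{MR601460} with the first non-zero such higher differential. Avramov's construction is essentially a Postnikov-tower obstruction theory for lifting the $H^*(A)$-module structure on $\Tor * Q M k$ to a genuine dg-$A$-module structure on $G$; after killing earlier obstructions, the obstruction to lifting through stage $n$ is represented by the class of $\widetilde m_n^G \otimes_Q k$ in the relevant Ext group, which sits as the appropriate $E^r$ page of the Avramov spectral sequence. Reconciling this Postnikov-style description with the Eilenberg-Moore description underlying Theorem \ref{introthm3} is the main technical obstacle; it should follow by tracking both constructions through the bar complex on $A$ and $G$, since Avramov's original spectral sequence is itself built from such a bar complex and the $A_\infty$-operations feed into the differentials exactly at the Postnikov stages.

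With this identification in place, the corollary follows by contrapositive. Suppose $(\widetilde m_n^G)$ is an \Aia-module structure on $G$ in which no $\widetilde m_n^G$ with $n \geq 3$ contains a unit. Then every entry of every such $\widetilde m_n^G$ lies in $\fm$, so $\widetilde m_n^G \otimes_Q k = 0$ for all $n \geq 3$. By Theorem \ref{introthm3}, every higher differential $d_r$ with $r \geq 3$ in the Avramov spectral sequence then vanishes, and by the identification above Avramov's obstruction is zero, contradicting the hypothesis.
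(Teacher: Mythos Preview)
Your contrapositive strategy and your use of the spectral sequence are on target and match the paper's approach. The gap is in your characterization of Avramov's obstruction. You describe it as a Postnikov-tower obstruction whose identification with the spectral sequence is ``the main technical obstacle.'' In fact, as recalled in the paper, Avramov's obstruction is \emph{defined} as
\[
o_q(M) = \ker\!\left( \frac{\Tor {q} Q M k}{(\Tor + Q R k \cdot \Tor {} Q M k)_q} \longrightarrow \Tor {q} R M k \right),
\]
which is exactly the kernel of the edge map $E^2_{0,q} \to \Tor q R M k$. There is nothing to reconcile: by elementary spectral sequence mechanics, $o_q(M)\neq 0$ if and only if some differential $d^r: E^r_{r,q-r+1}\to E^r_{0,q}$ is nonzero for $r\geq 2$ (not $r\geq 3$; note $d^2$ is already a ``higher'' differential here, induced by $\o m_3^G$). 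Your proposed detour through a stage-by-stage Postnikov lifting theory is both unnecessary and not quite correct: $o(M)$ is a single graded invariant, not a sequence of stage-$n$ obstructions represented by $\widetilde m_n^G\otimes k$.

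Two smaller points. First, you should explicitly invoke the standing hypothesis that $A$ is a dg-algebra: without $\o m_n=0$ for $n\geq 3$, the differentials $d^r$ into the $p=0$ column involve the $\o m_n$ as well as the $\o m_n^G$, and vanishing of the latter alone would not force $d^r=0$. Second, the relevant range is $r\geq 2$, corresponding to $\o m_{r+1}^G$ for $r+1\geq 3$. With these corrections and the correct definition of $o_q(M)$, your argument becomes exactly the paper's: if every $\widetilde m_n^G$ with $n\geq 3$ has entries in $\fn$, then $\o m_n^G=0$ for $n\geq 3$, the differentials $d^r$ ($r\geq 2$) landing in $E^r_{0,*}$ vanish, and hence $o(M)=0$.
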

Thus these obstructions only detect the existence of a dg-structure modulo the maximal ideal
of $R$. This is motivated by a strategy sketched in \cite{MR601460} to prove the Buchsbaum-Eisenbud
rank conjecture \cite{MR0453723}, where the key step is finding an
obstruction theory that detects precisely when $G$ has a dg-module structure. It would be enough to do this in the case $A$ is a Koszul complex.

The three tools described above (the lettered results) work well in conjunction. We
use them here to give new, short, proofs of the following four classical results on Golod
modules. 
\begin{cor*}
Let $(Q, \fn, k)$ be a regular local ring, $R =  Q/I$ with $I
\subseteq \fn^2$, and $M$ a finitely generated $R$-module.
\begin{enumerate}
\item If $M$ is a Golod module, then its first syzygy $\syz R
  M$ is also Golod.
\item If $M$ is a Golod module, then $R$ is a Golod ring.
\item If $R$ is Golod, then $\syz [d+1] R M$ is a
  Golod module, where $d = \dim Q$.
\item $M$ is Golod if and only if the change of rings maps
\[  \Tor * Q M k \to
\Tor * R M k \quad \mu: \Tor * Q {\syz R M} k \to \Tor * R {\syz R
  M}k\]
are injective.
\end{enumerate}
\end{cor*}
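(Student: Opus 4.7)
For part (1), the plan is to apply Proposition B to produce $\csyz R G$, a $Q$-projective resolution of $\syz R M$ with a canonical \Aia-module structure. I would then apply Theorem A to $\syz R M$: the $R$-bar resolution built from $A$ and $\csyz R G$ should be identified (essentially by construction) with the appropriate shift/truncation of the $R$-bar resolution of $M$ built from $A$ and $G$. Since $M$ is Golod and $A, G$ are minimal, the latter is minimal by Theorem A, hence so is the former, and Theorem A applied again gives that $\syz R M$ is Golod. A subsidiary step is checking that $\csyz R G$ is itself $Q$-minimal under the Golod hypothesis; this should follow because the Golod condition forces the only potentially unit entries in the mapping-cone construction of Proposition B to cancel.

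For (2), the strategy is to transfer Golodness from $M$ to the residue field $k = R/\fm_R$ (which equals $Q/\fn$ since $I \subseteq \fn^2$). The first observation is that $M$ Golod forces every \Ai-operation $\widetilde m_n$ on $A$ to land in $\fn A_{\geq 1}$, since each such $\widetilde m_n$ appears as a direct summand of a differential in the minimal bar resolution of $M$. Next, let $K$ be the Koszul complex on a minimal generating set of $\fn$, the minimal $Q$-free resolution of $k$, and use the minimal \Ai-algebra structure on $A$ to construct an \Aia-module structure on $K$ whose operations $\widetilde m_n^K$ are themselves minimal. A final application of Theorem A to $k$ then produces a minimal $R$-bar resolution of $k$, so $k$ is Golod over $Q$ and $R$ is a Golod ring. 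The main obstacle is the transfer step: producing a minimal \Aia-module structure on $K$ from the minimal \Ai-structure on $A$, as the naive lift $A \to K$ of the augmentation can carry unit entries; handling this should require a careful obstruction/lifting argument that exploits the specific form of $K$.

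Part (3) is immediate from (1) combined with Lescot's theorem, proved from the paper's machinery in \ref{syzs_over_golod_ring_are_golod}: Lescot gives that $\syz [d] R M$ is Golod for any $R$-module $M$ when $R$ is Golod and $d = \dim Q$, and applying (1) to this Golod module yields that $\syz [d+1] R M = \syz R {\syz [d] R M}$ is Golod.

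For (4), I would use Theorem C. In one direction, if $M$ is Golod, then (via Theorem A) every \Ai-module operation $\widetilde m_n^G$ with $n \geq 2$ lands in $\fn G$, and by the \Ai-description in Theorem C this forces every differential in the Avramov spectral sequence for $M$ to vanish modulo $\fm_R$; the sequence collapses at $E^2$ and its edge maps---precisely the change-of-rings maps $\Tor * Q M k \to \Tor * R M k$ of the statement (and similarly for $\syz R M$ via part (1))---become injective. For the converse, injectivity of both change-of-rings maps must force every differential in the spectral sequence to vanish, so that $E^2 = E^\infty$ forces minimality of every $\widetilde m_n^G$, and $M$ is Golod by Theorem A. The hard part is the converse: carefully matching each family of differentials to one of the two injectivity hypotheses and verifying that together they pin down every differential.
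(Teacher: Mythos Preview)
Your overall strategy is sound, but on each part the paper takes a shorter path by exploiting one key fact you are not using: condition (4) of Theorem \ref{char_of_golod_mods}, that $M$ is Golod if and only if \emph{some} (not necessarily minimal) $Q$-free resolution carries a minimal \Aia-module structure.

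For (1), you propose to identify the bar resolution built from $A$ and $\csyz R G$ with a truncation of the bar resolution built from $A$ and $G$. This identification is not immediate (the underlying modules do not obviously match), and the paper avoids it: since $M$ Golod forces $m$ and $m^G$ minimal, the explicit formula \eqref{eq:csyz_maps} shows $m^{\operatorname{syz}}$ is minimal, and then \ref{char_of_golod_mods}(4) finishes. You never need $\csyz R G$ itself to be minimal.

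For (2), the obstacle you flag --- constructing a minimal \Aia-module structure on the Koszul complex $K$ directly from a minimal structure on $A$ --- is real, and the paper sidesteps it. Instead (see the proof of Theorem \ref{golod_ring_thm}) one uses the spectral sequence: $M$ Golod gives $\o m = 0$; since the change-of-rings map $\Tor * Q k k \to \Tor * R k k$ is always injective when $I \subseteq \fn^2$, the bootstrapping argument of \ref{inj_change_of_rings_for_M_and_syz} forces $\o m^K = 0$ for \emph{any} \Aia-structure on $K$. No construction is needed.

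For (3) your reduction is fine, but the paper's direct argument is cleaner: Corollary \ref{high_ai_mod_given_by_ring} shows that after $\pd_Q M + 1$ syzygy steps, the higher maps $m_n^H$ of the iterated syzygy complex are built only from the $m_n$ of $A$; when $R$ is Golod these are minimal, so \ref{char_of_golod_mods}(4) applies.

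For (4), your forward direction is right, but your converse is missing the essential structure. The paper's argument (\ref{inj_change_of_rings_for_M_and_syz}) is an alternating bootstrap: injectivity for $M$ gives $\o m_2^G = 0$ (so $\csyz R G$ is minimal); injectivity for $\syz R M$, applied to this now-minimal resolution, gives $\o m_2^{\operatorname{syz}} = 0$, which by \eqref{eq:csyz_maps} unpacks to $\o m_2 = 0$ and $\o m_3^G = 0$; this makes $E^1 = E^2$ in both spectral sequences and one repeats. Both injectivity hypotheses are used at every stage, and the syzygy complex is what converts module-level information into information about $\o m_n$.
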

The first is due to Levin \cite{MR814185}, the second, independently, to
Lescot \cite{MR1060830} and Levin \cite{MR814185}, the third
Lescot \cite{MR1060830}, and the fourth Levin \cite{MR0429868}. Let us quickly sketch
our proofs. It follows from Theorem \ref{introthm1} that if $M$ is
Golod, then any \Ai-structures $\widetilde m, \widetilde m^G$ are minimal. The
\Ai-structure maps on the $Q$-free resolution $\csyz R G$ of $\syz R
M$ given in Proposition \ref{introthm2} are built from $\widetilde m$
and $\widetilde m^G$,
thus are also minimal, and so $\syz R M$ is Golod, again by
Theorem \ref{introthm1}. Using the computations of Theorem
\ref{introthm3}, we show in
Theorem \ref{golod_ring_thm} that $R$ is Golod if and only if
$\widetilde m$ is
minimal (i.e.\ this implies that the
\Aia-module structure on the resolution of $k$ is minimal). Coupled
with Theorem \ref{introthm1}, this makes it clear that if $M$ is a Golod module, then $R$ is a Golod ring. To prove the
third result, we show that for $n
> \dim Q$, the $n$th iterate of the syzygy resolution of Proposition \ref{introthm2}
has an \Aia-module structure with higher maps constructed
entirely from $\widetilde m$ (i.e.\ we have iterated
the maps of $\widetilde m^G$ into the differential). So if $R$ is Golod,
then $\widetilde m$ is minimal, and this implies the higher
homotopies for a
resolution of
$\syz [n] R M$ are also minimal when $n > \dim Q$, so the syzygy
module is Golod. The proof of the fourth
result uses the fact that the change of
rings maps are edge maps in the Avramov spectral sequence.

To end the introduction, let us discuss the context and future
directions of this work. Dg-techniques have been used been widely in homological commutative
algebra, see 
\cite{MR1648664} and its references, but \Ai-machinery
has been used sparingly, if at all. One probable reason is that
unless $Q \to R$ is the identity map, the resolution $A$
is not augmented over $Q$. And until recently, most parts of the \Ai-machinery
required an augmentation. Positselski showed in \cite{MR2830562} how to
compensate for a lack of augmentation by including a curvature
term on the bar construction. It is not made explicit below, but this is the
key to using \Ai-structures as we do. There are
further details on this setup in \cite[\S 2]{Burke14}.

The results of the
paper generalize in a straightforward way to the case $Q \to R$ is module
finite\footnote{This assumption is necessary to discuss the
difference in the size of minimal $Q$ and $R$ free resolutions of an
$R$-module $M$ in the way we do here, in particular for a Golod bound
to exist.} and the basic setup of the bar resolution
generalizes to the case $R$ is an arbitrary \Ai-algebra over $Q$. We assume that $R$ is cyclic to simplify the definition of
\Ai-algebra on a $Q$-projective resolution of $R$; in general the curvature term is non-zero in higher
degrees, and this complicates notation, especially in definitions. There is a very interesting generalization in a
 different direction. Here we are implicitly using the
twisting cochain $\Bar A \to A$ (see e.g.\ \cite{MR0436178} and
\cite{MR0365571} for classical references on dg-bar constructions and
twisting cochains, and
\cite{Burke14} for generalizations of the definitions and basic
results to \Ai-objects and curvature on the
coalgebra side) called the universal twisting cochain. When $R$ is Golod, our results
show that the universal twisting cochain is the smallest \emph{acyclic}
twisting cochain, i.e.\ one that preserves all the homological data
of $R$. The main goal of \cite{Burke14} is to give tools to find
acyclic twisting cochains $C \to A$, with $C$ smaller than $\Bar A$. For
instance, when $R$ is a complete intersection and $A$ is the Koszul
complex on defining equations of $R$, there is a
twisting cochain $C \to A$ with $C$ the divided powers on
$A_1[1]$, and this explains the polynomial behavior of infinite
resolutions over complete intersections. We view this paper as the Golod chapter in a
series, with further details on complete intersections to follow
soon. The next natural class of rings
to consider are codimension 3
Gorenstein rings (every codimension 2 ring is complete
intersection or Golod). We give a running example of a specific codimension 3
Gorenstein ring that we hope may hint at the possibilities of the machinery.

\begin{ack}
Thank you to Lucho Avramov, David Eisenbud, Srikanth Iyengar,
Frank Moore, and Mark Walker for helpful
conversations and questions, and Jim Stasheff for comments on the manuscript.
\end{ack}

\section{Conventions}
By $Q$ we denote a commutative ring and we always work over this ring. In particular, all unmarked tensor and hom
functors are defined over $Q$. For graded
  modules $M,N$, let $\Hom {} M N$ be the graded module with degree
  $i$ component 
  $\prod_{j \in \Z} \Hom {} {M_j} {N_{j + i}}$. We use
  homological indexing for complexes, so differentials lower
  degree. If $M$ and
  $N$ are complexes, $\Hom {} M N$ is a complex with differential
  $\dhom(f) = f d_{M} - (-1)^{|f|} d_{N} f$. A map of chain complexes
  is a cycle in $\Hom {} M N$ and a morphism of complexes is a
  degree zero
  map of chain complexes. If $M$ is a complex, $M[1]$ is the complex with $M[1]_n = M_{n
    -1}$ and $d_{M[1]} = -d_M$. We set
\[ s: M \to M[1]\]
to be the degree 1 map with $s(m) = m$.
When evaluating tensor products of homogeneous
  maps we use the sign convention $(f \otimes g)(x \otimes y)
  = (-1)^{|g||x|}f(x) \otimes g(y)$. If $M, N$ are complexes,
  $M \otimes N$ is a complex with differential $d_M
  \otimes 1 + 1 \otimes d_N$. All elements of graded objects are
  assumed to be homogeneous.

 If $f: M \to N$ is a morphism of complexes,
  $\cone f$ is the complex with underlying module $M[1] \oplus N$ and
  differential
\[ d^{\cone f} = \left ( \begin{matrix} d^{M[1]}
            & 0 \\ f s^{-1} & d^N\end{matrix} \right ): M[1] \oplus N \to M[1] \oplus N. \]
If $M$ is a complex and $x$ a cycle in $M$, $\cls x$ is
the class of $x$ in $H_*(M)$.

Throughout $R$ is a cyclic $Q$-algebra, $M$
an $R$-module, and
\[A \xra{\simeq} R \quad  \enspace G \xra{\simeq} M\]
$Q$-projective resolutions. We assume that $A_0 = Q$ and set $\p
A = A_{\geq 1}$. We do not assume that $R$ or $M$ has finite
projective dimension over $Q$.

\section{A-infinity structures on resolutions}
\numberwithin{equation}{section}
In this section we recall the definition of \Ai-algebra and
module, give a short proof that such structures exist as claimed in the
introduction, and use them to construct an $R$-projective bar resolution of $M$. For an
introduction to \Ai-algebras, see \cite{MR1854636}. We
use some definitions and very basic results from \cite{Burke14}, but
emphasize that none of the heavy machinery of that paper is used here.

We denote the differential of the
complex $A$ as $d^A$. In the following definition we view $d_1^A$ as a
degree --2 map of chain complexes $\p A[1] \to A_0$ and make the identifications $A_0 \otimes \p A[1] = \p A[1] =\p A[1] \otimes A_0$.
\begin{defn}\label{defn_ai_alg}
  An \emph{\Ai-algebra structure on $A$} is a
  sequence of degree --1 maps $m = (m_n) = (m_n)_{n \geq 1}$, with $m_n:
  \p A[1]^{\otimes n} \to \p A[1]$, satisfying:
  $$m_1 = d^{A[1]}|_{\p A[1]},$$
  $$m_1 m_2 + m_2( m_1 \otimes 1 + 1 \otimes m_1) = d_1^A \otimes
    1 - 1 \otimes d_1^A,$$
$$\text{ and } \quad \sum_{i= 1}^{n} \sum_{j = 0}^{n - i} m_{n - i + 1} (
    1^{\otimes j} \otimes m_{i} \otimes 1^{\otimes n - i - j} ) = 0
    \text{ for all } n \geq 3.$$
If $Q$ is graded, $I$ is homogeneous and $A$ is a graded projective
resolution, $m$ is a \emph{graded \Ai-algebra structure} if each
$m_n$ preserves internal degrees.
\end{defn}

\begin{remss}We note the following:
  \begin{enumerate}
  \item Using $\p A[1]$ instead of $\p A$ removes signs from the
    definition.
\item Using $\p A$
    instead of $A$ ensures that our \Ai-algebras are \emph{strictly
      unital}. Indeed, let $1_A$ be an element of $A_0$ that maps to
    $1 \in R$. If $(m_n)$ is an \Ai-structure on $A$,
    then one can uniquely extend each $m_n$ to a map $m_n': A[1]^{\otimes n}
    \to A[1]$ such that $m_1' = d^{A[1]}$, $m_2'( a \otimes 1_A )
    = (-1)^{|a|+1} a$ and
    $m_2'( 1_A \otimes a) = -a$ for all $a \in A$, and $m_n'$ vanishes on any term
    containing $1_A$, for $n \geq 3$.

\item The map $d_1^A \otimes 1 - 1 \otimes d_1^A$ is a \emph{curvature term},
    introduced by Positselski,
    accounting for the fact that $A$ is not augmented. See \cite[\S
    7]{MR2830562} and \cite[2.2.8]{Burke14} for further details.
\item To match up with notation from the introduction, set $\widetilde m_n = s m_n
  (s^{-1})^{\otimes n}$.
  \end{enumerate}
\end{remss}

\begin{ex}\label{dga_is_aia}
Let $i: \p A \into A$ and $p:
A \to \p A$ be the canonical maps. If $A$ has a
dg-algebra structure with multiplication $\mu$, then
\[ m_2 = sp\mu (is^{-1} \otimes is^{-1}): \p A[1] \otimes \p A[1] \to
\p A[1]\]
and $m_n = 0$ for $n \geq 3$, is an \Ai-algebra structure on
$A$. 
\end{ex}

\begin{defn}\label{defn_ai_mod}
Let $(m_n)$ be an \Ai-algebra structure on $A$. An \emph{\Aia-module
    structure on $G$} is a
  sequence of degree --1 maps $m^G = (m_n^G) = (m_n^G)_{n \geq 1}$, with $m_n^G: \p
  A[1]^{\otimes n - 1} \otimes G \to G$, satisfying:
  \begin{displaymath}
 m_1^G = d^G,
\end{displaymath}
\begin{displaymath}
m_1^G m_2^G + m_2^G( m_1 \otimes 1 + 1 \otimes m_1^G) = d_1^A
      \otimes 1,
    \end{displaymath}
\begin{displaymath}
\sum_{i= 1}^{n} \sum_{j = 0}^{n - i} m_{n - i + 1}^G (
      1^{\otimes j} \otimes m_{i} \otimes 1^{\otimes n - i - j} ) = 0
     \text{ for all } n \geq 3,
  \end{displaymath}
where the $m_i$ in the last line is $m_i^G$ when $j = n -
      i$. If $Q, R, M$ are graded and $A, G$ are graded resolutions,
      $m^G$ is a \emph{graded
  \Aia-module structure} if each $m_n^G$ preserves internal degrees.
\end{defn}

\begin{ex}
If $A$ is a dg-algebra and $G$ is a dg $A$-module with structure map $\mu^G: A
 \otimes G \to G,$ then
\[ m^G_2 = \mu^G (is^{-1} \otimes 1): \p A[1] \otimes G \to G\]
and $m^G_n = 0$ for $n \geq 3$
is an \Aia-module structure on $G$.
\end{ex}

\begin{ex}\label{hyp_ex}
Let $R = Q/(f)$ with $f$ a non-zero divisor and set $A
= 0 \to Q \xra{f} Q \to 0$. Then $A$ is
a $Q$-projective resolution of $R$ and $m_n = 0$ for all $n$ is an
\Ai-algebra structure on $A$. If $m^G$ is an
\Aia-module structure on
$G$, identify $m_n^G$ with a degree $-1$ map $G[2n-2] \to G$, e.g.\ a
degree $2n-3$ map $G \to G$. Then $m_2^G$ is a homotopy
for multiplication by $f$ and for $n \geq 3$,
\[ \sum_{i = 1}^n m^G_{n-i+1} m^G_i = 0.\]
Thus an \Aia-module structure on $G$ is the
same as a system of higher homotopies in the sense of \cite{MR0241411}
and \cite{Ei80}, where $\sigma_i = m^G_{i+1}$.
\end{ex}

It follows from \cite[Theorem 2.4.5]{Burke14} that there always exists an
\Ai-algebra structure on $A$ and an \Aia-module structure on $G$. We give a shorter proof 
below for the special case we are working with in this paper. This proof also gives an
algorithm for constructing \Ai-structures by computer, and shows that if
$R, M, A$ and $G$ are graded, then graded \Ai-structures on $A$ and
$G$ exist.
\begin{prop}\label{ai_exist}
, $R$ a cyclic $Q$-algebra, $M$ an $R$-module, and 
\[A \xra{\simeq} R \quad G \xra{\simeq} M\]
$Q$-projective resolutions. There exists an \Ai-algebra structure on $A$ and an \Aia-module
 structure on $G$.
\end{prop}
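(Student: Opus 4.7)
The plan is to build the structure maps by induction on $n$. In degree one, $m_1$ and $m_1^G$ are prescribed by the definitions, so there is nothing to construct. For $n\ge 2$, the $n$th relation can be rewritten as a cohomological equation
\[\dhom(m_n)=E_n\]
in the Hom complex $\rHom {} {\p A[1]^{\otimes n}} {\p A[1]}$, where $E_n$ is a specific expression involving $m_1,\ldots,m_{n-1}$ (and, when $n=2$, the curvature term $d_1^A\otimes 1-1\otimes d_1^A$). The module version is analogous, with $E_n^G$ living in $\rHom {} {\p A[1]^{\otimes n-1}\otimes G} G$.

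At each inductive step two things must be verified. First, $E_n$ is a cycle: this is standard \Ai-bookkeeping, coming from the Leibniz rule applied to $\dhom(E_n)$ together with the relations already established for $m_1,\ldots,m_{n-1}$, so that the resulting compositions cancel in pairs. Second, $E_n$ is a boundary, so admits a preimage $m_n$. This is where acyclicity enters: $\p A[1]^{\otimes n}$ is a bounded-below complex of $Q$-projectives, and for $n=2$ the multiplication $R\otimes R\to R$ can be lifted through the quasi-isomorphism $A\to R$ to produce $m_2$, while for $n\geq 3$ one lifts $E_n$ degree-by-degree using projectivity of the domain together with the vanishing of $H_{\geq 1}(A)$. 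The same inductive scheme, now using the acyclicity of $G\to M$, builds $(m_n^G)$ from $(m_n)$, with the $n=2$ step corresponding to a lift of the $R$-module structure $R\otimes M\to M$ through $\p A[1]\otimes G\to G$.

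The main obstacle is the boundary step for $n\geq 3$. Because the sums defining $E_n$ grow rapidly with $n$, the verification that this specific cycle is null-homotopic requires careful tracking of signs and insertion positions before the acyclicity argument can be applied. The payoff is that the lifts can be chosen algorithmically: given $m_1,\ldots,m_{n-1}$, computing $E_n$ and solving $\dhom(m_n)=E_n$ is a finite linear-algebra problem, which is what underlies the computational side of the paper. When $R,M$ are graded and $A,G$ are graded resolutions, every choice above is $Q$-linear and preserves internal degrees (the curvature term $d_1^A\otimes 1-1\otimes d_1^A$ is homogeneous), so the lifts may be taken homogeneous, giving graded \Ai-structures.
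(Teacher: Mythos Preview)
Your inductive scheme is exactly the paper's: set $m_1$, then for $n\ge 2$ rewrite the $n$th relation as $\dhom(m_n)=-r(m|_{n-1})$, invoke the obstruction-theoretic bookkeeping (the paper cites Lef\`evre--Hasegawa and \cite[2.3.3]{Burke14}) to see this expression is a cycle, and then argue it is a boundary; likewise for $m^G$. One sharpening is worth making: for $n\ge 3$ the target of the Hom complex is $\p A[1]$, which has $H_2\cong I\neq 0$, so ``vanishing of $H_{\ge 1}(A)$'' is not literally what is used---the paper replaces your phrase by the explicit degree computation $H_{-1}\Hom{}{\p A[1]^{\otimes n}}{\p A[1]}\cong H_{n-2}\Hom{}{\p A^{\otimes n}}{I}=0$, the point being that the latter complex is concentrated in non-positive degrees.
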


\begin{proof}
  We construct the sequence $(m_n)$ inductively. Set $m_1 = d^{A[1]}|_{\p A[1]}$. To construct $m_2$,
 consider the degree --2 map
\[ \alpha = d_1^A \otimes 1 - 1 \otimes d_1^A: \p A[1] \otimes \p A[1] \to \p
A[1],\]
where $d_1^A$ is viewed as a degree --2 map $\p A[1] \to A_0$ and we have identified $A_0 \otimes \p A[1] \cong \p A[1] \cong \p
A[1] \otimes A_0$. Since $d_1^A$ is a map of chain complexes, $d_1^A \otimes 1 - 1 \otimes
d_1^A$ is also. The map $\alpha$ has the form:
\[ \xymatrix{ 0 & \ar[l] A_1 \otimes A_1 \ar[dd]_{d_1^A \otimes 1 - 1
    \otimes d_1^A} & \ar[l] \substack{A_1 \otimes A_2\\A_2 \otimes A_1}
%\ar@{-->}[ddr]^{m_2} 
  \ar[dd]_{\tiny
    \begin{pmatrix}
      d_1^A \otimes 1 \\
-1 \otimes d_1^A
    \end{pmatrix}}
& \ar[dd] \ar[l]
\substack{A_1^{\otimes 3} \\ A_1 \otimes A_3\\A_2 \otimes
  A_2 \\ A_3 \otimes A_1} \ar[dd]_{\tiny
  \begin{pmatrix}
    0 \\ d_1^A \otimes 1 \\ 0 \\ -1 \otimes d_1^A
  \end{pmatrix}}
% \ar[l]
& \ar[l] \ldots \\
& & & & \\
0 & A_1 \ar[l] & A_2 \ar[l] & A_3 \ar[l] & \ar[l] \ldots}\]
The first
non-zero component induces the zero map in homology and thus by the classical lifting result
\cite[V.1.1]{MR0077480} is nullhomotopic. Take any homotopy as $m_2$. Given $m_1, \ldots, m_{n-1}$, for
$n \geq 3$, we use the obstruction
lemma of \LH \cite[Appendix B]{LH}, modified for strictly unital
algebras in \cite[\S 2.3]{Burke14}, to construct $m_n$. By \cite[2.3.3]{Burke14},
\[r(m|_{n-1}) := \sum_{i= 2}^{n-1} \sum_{j = 0}^{n - i} m_{n - i + 1} (
    1^{\otimes j} \otimes m_{i} \otimes 1^{\otimes n - i - j} )  \]
is a degree --1 cycle in the complex $\Hom {} {\p A[1]^{\otimes n}} {\p
  A[1]}$, where
$\p A[1]^{\otimes n}$ has the usual tensor differential, and a map $m_n$ satisfies the
\Ai-relations if and only if 
\[\dhom(m_n) + r(m|_{n-1}) =
0.\] 
But for $n \geq 3$,
\[ H_{-1} \Hom {} {\p A[1]^{\otimes n}} {\p A[1]} \cong H_{n-2} \Hom {} {\p
  A^{\otimes n}} I  = 0\]
since $\Hom {} {\p
  A^{\otimes n}} I$ is concentrated in non-positive degrees.
Thus $r(m|_{n-1})$ is a boundary. Pick any
map in the preimage for $m_n$.

To put an \Aia-module structures on $G$, set $m_1^G = d^G$ and let $m_2^G$
be a homotopy for the null-homotopic map $d_1^A \otimes 1: \p
A[1] \otimes G \to G$. Then one proceeds analogously as for $A$, using the analogue of the obstruction
theory for modules given in \cite[\S 3.3]{Burke14}.
\end{proof}

By \cite[Theorem 2.4.5]{Burke14} the \Ai-structures of Proposition
\ref{ai_exist} are unique up to homotopy. We will need the following
weaker results, whose proofs are clear.
\begin{cor}
Using the notation of \ref{ai_exist}, the following are equivalent.
\begin{enumerate}
\item there exists an \Ai-algebra structure $(m_n)$ on $A$ with $m_3 =
  0$;
\item there exists an \Ai-algebra structure $(m_n)$ on $A$ with $m_n =
  0$ for all $n \geq 3$;
\item $A$ has a dg-algebra structure.
\end{enumerate}
\end{cor}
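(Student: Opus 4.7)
The easy direction (3) $\Rightarrow$ (2) is done explicitly in Example \ref{dga_is_aia} (take $m_n = 0$ for $n \geq 3$), and (2) $\Rightarrow$ (1) is tautological. The nontrivial implication is (1) $\Rightarrow$ (3), which I sketch below.

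Assume $(m_n)$ is an \Ai-structure on $A$ with $m_3 = 0$. In the \Ai-relation of Definition \ref{defn_ai_alg} for $n = 3$, every term containing $m_3$ as an outer or inner factor drops out, leaving the strict associativity
\[
m_2 \circ (m_2 \otimes 1) + m_2 \circ (1 \otimes m_2) = 0 \quad \text{on } \p A[1]^{\otimes 3}.
\]
Passing to the degree-zero avatar $\widetilde m_2 = s \circ m_2 \circ (s^{-1})^{\otimes 2}: \p A \otimes \p A \to \p A$ (Remark 4 after Definition \ref{defn_ai_alg}), this becomes ordinary associativity of $\widetilde m_2$. Next, extend $\widetilde m_2$ to a multiplication $\mu: A \otimes A \to A$ by choosing a preimage $1_A \in A_0$ of $1 \in R$ and declaring it a strict unit, as in Remark 2 after Definition \ref{defn_ai_alg}, with $\mu|_{A_0 \otimes A}$ given by the $Q$-module structure on $A$. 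Associativity of $\mu$ on all of $A$ is automatic from associativity of $\widetilde m_2$ together with the unit axioms.

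The remaining step is the graded Leibniz rule for $d^A$ with respect to $\mu$. The $n = 2$ \Ai-relation
\[
m_1 m_2 + m_2(m_1 \otimes 1 + 1 \otimes m_1) = d_1^A \otimes 1 - 1 \otimes d_1^A
\]
is, once desuspended, exactly the Leibniz identity on $\p A \otimes \p A$: the curvature term on the right compensates for the component $d_1^A: A_1 \to A_0$ of $d^A$ that is discarded by the truncation $A \twoheadrightarrow \p A$, and reinstating that component via the extended $\mu$ turns the relation into the usual Leibniz rule on all of $A \otimes A$. On tensor factors containing $1_A$ the Leibniz rule is automatic from $d^A(1_A) = 0$. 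This exhibits $(A, d^A, \mu, 1_A)$ as the required dg-algebra and closes the loop.

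The one point that requires care is this final translation: tracking the Koszul signs introduced by the suspension $s$ and by the tensor differential on $\p A[1]^{\otimes 2}$, and matching the curvature term $d_1^A \otimes 1 - 1 \otimes d_1^A$ against the portion of the Leibniz rule that lands in $A_0$ after extending $\mu$ by unitality. None of this is deep, but it is the reason the equivalence is stated as a corollary rather than as an immediate remark.
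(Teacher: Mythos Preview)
Your argument is correct. The paper itself gives no explicit proof, simply remarking that the corollary is ``clear'' from the inductive construction in Proposition~\ref{ai_exist}; the intended reading is presumably (1) $\Rightarrow$ (2): since $m_3=0$ satisfies $\dhom(m_3)+r(m|_2)=0$, the obstruction $r(m|_2)=m_2(m_2\otimes 1+1\otimes m_2)$ vanishes, and then each subsequent obstruction $r(m|_{n-1})$ for $n\ge 4$ involves at least one $m_j$ with $3\le j\le n-1$, so one may take $m_n=0$ inductively.

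Your route (1) $\Rightarrow$ (3) is logically equivalent and rests on the same core observation---that $m_3=0$ forces strict associativity of $m_2$---but packages the conclusion as a dg-algebra structure rather than as a truncated \Ai-structure. The trade-off is that you must then verify the Leibniz rule and unitality by hand (the sign-and-curvature bookkeeping you flag at the end), whereas the obstruction-theoretic route stays entirely inside the \Ai-framework and avoids desuspending. Either way the content is the vanishing of $r(m|_2)$, so the two arguments are essentially the same proof viewed from two sides of Example~\ref{dga_is_aia}.
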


\begin{cor}\label{obs_cor}
Assume that $A$ has a dg-algebra structure. The following are equivalent.
\begin{enumerate}
\item there exists an \Aia-module structure $(m_n^G)$ on $G$ with $m_3^G =
  0$;
\item there exists an \Aia-module structure $(m_n^G)$ on $G$ with $m_n^G =
  0$ for all $n \geq 3$;
\item $G$ has a dg $A$-module structure.
\end{enumerate}
\end{cor}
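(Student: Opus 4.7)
The plan is to prove the cyclic implications $(3) \Rightarrow (2) \Rightarrow (1) \Rightarrow (3)$, the middle being immediate and the other two being direct unwindings of the \Aia-relations.

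For $(3) \Rightarrow (2)$: given a dg $A$-module structure $\mu^G: A \otimes G \to G$, set $m_2^G = \mu^G(is^{-1} \otimes 1)$ and $m_n^G = 0$ for $n \geq 3$, paralleling Example \ref{dga_is_aia}. The $n = 2$ \Aia-relation is the Leibniz rule restricted to $\p A \otimes G$, with the curvature term $d_1^A \otimes 1$ accounting for how $d^A(A_1) \subseteq A_0$ acts on $G$ through scalar multiplication. The $n = 3$ relation collapses, after deleting the vanishing $m_n$ ($n \geq 3$, by the dga hypothesis) and $m_n^G$ ($n \geq 3$, by construction), to strict associativity $m_2^G(m_2 \otimes 1) + m_2^G(1 \otimes m_2^G) = 0$; the relations for $n \geq 4$ hold vacuously as every term in them contains some $m_n$ or $m_n^G$ with $n \geq 3$.

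The main step is $(1) \Rightarrow (3)$. Starting from $(m_n^G)$ with $m_3^G = 0$, I would define $\mu^G: A \otimes G \to G$ by $\mu^G(q \otimes g) = qg$ for $q \in A_0 = Q$ and $\mu^G(a \otimes g) = m_2^G(sa \otimes g)$ for $a \in \p A$, extended $Q$-linearly. Unitality is built in. The chain-map identity $d^G \mu^G = \mu^G(d^A \otimes 1 + 1 \otimes d^G)$ is trivial on $A_0 \otimes G$ and on $\p A \otimes G$ unwinds directly to the $n = 2$ \Aia-relation, the curvature term $d_1^A \otimes 1$ corresponding precisely to the contribution $\mu^G(d_1^A(a) \otimes g) = d_1^A(a) \cdot g$ when $a \in A_1$. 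For associativity $\mu^G(\mu \otimes 1) = \mu^G(1 \otimes \mu^G)$ on $\p A \otimes \p A \otimes G$, the identifications $m_2 = sp\mu(is^{-1})^{\otimes 2}$ (from Example \ref{dga_is_aia}) and $m_2^G = \mu^G(is^{-1} \otimes 1)$ show the difference desuspends to $m_2^G(m_2 \otimes 1) + m_2^G(1 \otimes m_2^G)$, which is the $n = 3$ \Aia-relation stripped of terms containing $m_n$ for $n \geq 3$ (absent by dga) and $m_3^G$ (absent by assumption). Associativity on tensors involving an $A_0$-factor reduces to unitality.

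The main obstacle is sign bookkeeping when moving between the suspended maps $m_n$, $m_n^G$ on $\p A[1]^{\otimes n}$ and $\p A[1]^{\otimes n-1} \otimes G$ and the unsuspended multiplications $\mu$, $\mu^G$. This is a routine application of the Koszul convention $(f \otimes g)(x \otimes y) = (-1)^{|g||x|} f(x) \otimes g(y)$ from the Conventions section together with the desuspension identity of Example \ref{dga_is_aia}; once the signs match, the three extracted relations are literally the dg $A$-module axioms for $\mu^G$.
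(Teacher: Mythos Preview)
Your argument is correct. The paper offers no proof beyond ``whose proofs are clear,'' but its placement as a corollary to Proposition~\ref{ai_exist} suggests the intended route is obstruction-theoretic: given $(m_n^G)$ with $m_3^G=0$, keep $m_1^G,m_2^G$, set $m_3^G=0$, and observe that the obstruction $r(m^G|_{n-1})$ for each $n\ge 4$ is a sum of composites each containing some $m_i$ ($i\ge 3$, vanishing by the dg-algebra hypothesis) or some $m_i^G$ ($i\ge 3$, vanishing by induction), so one may take $m_n^G=0$ for all $n\ge 3$; this is $(1)\Rightarrow(2)$, and $(2)\Leftrightarrow(3)$ is the example preceding \ref{hyp_ex}. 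You instead prove $(1)\Rightarrow(3)$ directly, reading off the Leibniz rule and associativity from the $n=2$ and $n=3$ \Aia-relations once $m_3=m_3^G=0$. The two routes are equally short; yours avoids invoking the inductive machinery of Proposition~\ref{ai_exist}, while the paper's makes the role of that proposition transparent. Either way the content is the single observation that the $n=3$ relation with $m_3=m_3^G=0$ collapses to $m_2^G(m_2\otimes 1)+m_2^G(1\otimes m_2^G)=0$, and your acknowledgment that the sign bookkeeping is routine Koszul-convention tracking is accurate.
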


\begin{quest}
It would be interesting to compute \Ai-structures in special
cases, e.g.\ in Example \ref{ex_of_hhs} below, or for the resolutions
Srinivasan shows in \cite{MR1152904} have no dg-algebra structure. More ambitiously, if $A$ or $G$ has a combinatorial structure, e.g.\ if $A$ is a
cellular resolution \cite{MR1647559}, can one give a corresponding
combinatorial description of the \Ai-structures?

In another direction, we ask what
restrictions do \Ai-structures place on the Boij-Soederberg
decompositions \cite{MR2505303} of $A$ and $G$, and vice versa?
\end{quest}

\begin{ex}
If $A$ and $G$ have length $2$, then by degree considerations, $m_n = 0
= m_n^G$ for
all $n \geq 2$, i.e\ $A$ is a dg-algebra and $G$ is a dg $A$-module.
If $A$ has length 3, then by \cite[1.3]{MR0453723}, $A$ is a dg-algebra.
\end{ex}

This next example shows that $G$ can have non-zero $m_3^G$ 
when $A, G$ both have length 3. I do not know if it is possible
to put a dg $A$-module structure on this complex.
\begin{ex}\label{codim_3_ex}
Let $k$ be a field and $Q = k[[x,y,z]]$. Let $I = (x^2, -yz, xy+z^2,
-xz, y^2)$ be the ideal generated by the submaximal
Pfaffians of the alternating matrix
\[ \phi =
\left ( \begin{array}{rrrrr}
0 & y & 0 & 0 & z \\
-y & 0 & x & z & 0\\
0 & -x & 0 & y & 0\\
0 & -z & -y & 0 & x\\
-z & 0 & 0 & -x & 0
\end{array} \right ),\]
and set $R = Q/I$. By \cite{MR0453723}, $R$ is a codimension 3 Gorenstein ring and
the minimal $Q$-free resolution of $R$ is
\[ A: \quad 0 \la Q \xla{ d_1} Q^5 \xla{\phi} Q^5 \xla{d_1^{\operatorname{Tr}}} Q \la 0\]
with $d_1 =
\left ( \begin{array}{rrrrr}
 x^2 & -yz & xy + z^2 & -xz & y^2
\end{array} \right )$. Let $(a_1, \ldots, a_5)$, $(b_1,
\ldots, b_5)$ and $(c_1)$ be bases for $A_1, A_2$ and $A_3$, respectively. By
\cite[4.1]{MR0453723}, $A$ is a graded-commutative dg-algebra with
multiplication table
\[a_1 a_2 = x b_3 + yb_5 \quad a_1 a_3 = -xb_2 - z b_5 \quad a_1 a_4 = x b_5
\quad a_1 a_5 = -yb_2 + zb_3 - xb_4\]
\[ a_2 a_3 = xb_1 - zb_4 \quad a_2 a_4 = zb_3 \quad a_2 a_5 = y b_1
\quad a_3 a_4 = -zb_2 + y b_5\]
\[a_3 a_5 = -zb_1 - yb_4 \quad a_4 a_5 = xb_1 + yb_3 \quad a_i b_j =
\delta_{ij} c_1 \text{ for } 1 \leq i, j \leq 5.\]
(See also \cite[2.1.3]{MR1648664} for details on this construction.) We
consider the dg-algebra $A$ as an \Ai-algebra, as in Example \ref{dga_is_aia}.

Let $K \xra{\simeq} k$ be the Koszul complex over $Q$. Since $K$ is the $Q$-free
resolution of the $R$-module $k$, it has an \Aia-module structure. To
construct one explicitly, we consider the following map of complexes.
\[ \xymatrix{
A: & 0 & \ar[l] Q \ar[d]_= & Q^5 \ar[d]_{\alpha_1} \ar[l] & Q^5
\ar[d]_{\alpha_2} \ar[l] & Q \ar[l] \ar[d]_{\alpha_3} & \ar[l] 0\\
K: & 0 & Q \ar[l] & \ar[l]^{d_1^K} Q^3 & Q^3 \ar[l]^{d_2^K}
& Q \ar[l]^{d_3^K} & \ar[l] 0
}\]
with
\[ \alpha_1 =
\begin{pmatrix*}[r]
  x & 0 & 0 & 0 & 0\\
0 & 0 & x & 0 & y\\
0 & -y & z & -x &0
\end{pmatrix*}
\quad
\alpha_2 =
\begin{pmatrix*}[r]
 0 & -x & 0 & 0 & 0\\
0 & 0 & 0 & 0 & -x\\
y & 0 & 0 & 0 &0
\end{pmatrix*}
\quad
\alpha_3 =
\begin{pmatrix*}[r]
  xy
\end{pmatrix*},
\]
where
\[ d_1^K =
\begin{pmatrix*}[r]
 x & y & z 
\end{pmatrix*}
\quad
d_2^K = \begin{pmatrix*}[r]
 -y & -z & 0\\
x& 0 & -z\\
0 & x & y 
\end{pmatrix*}
\quad
d_3^K =
\begin{pmatrix*}[r]
 z \\ -y \\ x 
\end{pmatrix*}.
\]

Set $m_1^K = d^K$ and
\[m_2^K = \mu^K ( -\alpha i s^{-1} \otimes 1): \p A[1] \otimes K \to K,\]
where $\mu^K: K \otimes K \to
K$ is multiplication. Using that $\alpha$ is a map of complexes and
that $m_1 = -(d^A_{\geq 2} - d_1^A)$, one checks that $m_1^K m_2^K + m_2^K( m_1
\otimes 1 + 1 \otimes m_1^K) = d_1^A \otimes 1$.

We now calculate $m_2^K( m_2
\otimes 1 + 1 \otimes m_2^K)$. For degree reasons, the only possible non-zero component is $A_1 \otimes A_1 \otimes K_0 \to
K_2$. On this component, we have
\[m_2^K( m_2
\otimes 1 + 1 \otimes m_2^K)  = -\alpha_2\mu^A + \mu^K(
\alpha_1 \otimes \alpha_1).\]
The matrix of this map with respect to the basis $a_i \otimes a_j$ of $A_1 \otimes
A_1$, ordered so that the first two elements are $a_3 \otimes a_4$ and $a_4 \otimes a_3$, is
\[\arraycolsep=2.5pt
\left ( \begin{array}{rrrrr}
-xy & xy & 0 & 0 \ldots\\
xy & -xy & 0 & 0 \ldots\\
-x^2 & x^2& 0 & 0 \ldots\\
\end{array} \right ).
\]
Thus, defining $m_3^K: \p A[1]^{\otimes 2} \otimes K \to K$ on the
component $A_1 \otimes A_1 \otimes K_0 \to K_3$ by
\[m_3^K = \left ( \begin{array}{rrrrr}
x & -x & 0 & 0 \ldots
\end{array} \right ),\]
and zero elsewhere, gives
\[m_3^K( m_1 \otimes 1^{\otimes 2} + 1 \otimes m_1 \otimes 1 + 1^{
\otimes 2} \otimes m_1^K) + m_1^K m_3^K =  m_1^K m_3^K = -m_2^K( m_2
\otimes 1 + 1 \otimes m_2^K),\] and so $m^K$
is an \Aia-module structure on $K$.
\end{ex}

\begin{ex}\label{ex_of_hhs}
Let $Q = k[[x,y,z,w]]$, $R = Q/(x^2, xy, yz, zw,
w^2)$ and $A$ be the minimal $Q$-free resolution of $R$. Then $\pd_Q
R = 4$, and by \cite[2.3.1]{MR1648664} there is no dg-algebra
structure on $A$. Thus every \Ai-algebra
structure on $A$ has nonzero $m_3$.
\end{ex}
The proof of the above uses the obstuctions
of \cite{MR601460}; we relate these to \Ai-structures in Section \ref{obstruction_sec}.

\begin{defn}
  Let $\Bar A$ be the module $\bigoplus_{n \geq 0} \p
  A[1]^{\otimes n}$ and write $[x_1 | \ldots |
x_n]$ for the element $s(x_1) \otimes \ldots \otimes s(x_n) \in \p
A[1]^{\otimes n} \subseteq \Bar A$. The \emph{bar resolution} on $A$ and $G$ has underlying module
  \[ R \otimes \Bar A \otimes G = \bigoplus_{n \geq 0} R \otimes \p
  A[1]^{\otimes n} \otimes G\]
and differential
\[
      d( { [x_1| \ldots | x_n] \otimes g} ) =  R \otimes \left (\sum_{i
        = 1}^n \sum_{j = 0}^{i-1}
      {[\u x_1|\ldots |\u x_j|m_i( [x_{j+1}| \ldots | x_{j +
        i}])|\ldots|x_n]
      \otimes g } \right .\]
\[     \left. + \sum_{i = 0}^n { [\u x_1
      | \ldots |\u x_{n-i+1}] \otimes m_{i}^G( [x_{n-i+2}| \ldots |x_n]
      \otimes g) } \right )
\]
where $\u x =
  (-1)^{|x|+1}x.$
Let $\epsilon_G: G \to M$ be the
augmentation of the resolution. Define a map
\[ \epsilon_{\Bar {}}: R \otimes \Bar A \otimes G \to M\]
by $\epsilon_{\Bar {}}( { [\, \,  ] \otimes g } ) = \epsilon_G(g)$ and
$\epsilon_{\Bar {}}( { [x_1 | \ldots | x_n] \otimes g} ) = 0$ for $n \geq
1$.
\end{defn}

\begin{rems}
The module $\Bar A = \bigoplus_{n \geq 0} \p
  A[1]^{\otimes n}$ has a graded coalgebra structure (the tensor
  coalgebra) and a coderivation induced by the
  \Ai-algebra structure on $A$. Together with the $Q$-linear map $\Bar A
  \onto \p{A_1} \xra{d_1} Q = A_0$, they form a curved dg-coalgebra called the
  \emph{bar construction} or bar coalgebra of $A$. 
\end{rems}

\begin{thm}\label{proj_resln_thm}
 Let $Q$ be a commutative ring, $R$ a cyclic $Q$-algebra, $M$ an $R$-module, and 
\[A \xra{\simeq} R \quad G \xra{\simeq} M\]
$Q$-projective resolutions with \Ai-algebra and \Aia-module
structures, respectively. The bar resolution is a complex of
projective $R$-modules and the map
\[ \epsilon_{\Bar {}}: R \otimes \Bar A \otimes G \to M\]
makes it an $R$-projective resolution of $M$.
\end{thm}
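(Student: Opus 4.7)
The plan has three parts: (i) each summand is $R$-projective, (ii) $d^2=0$, and (iii) the augmentation $\epsilon_{\Bar{}}$ is a quasi-isomorphism.

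Part (i) is immediate. Each $R\otimes\p A[1]^{\otimes n}\otimes G$ is the base change along $Q\to R$ of a tensor product of $Q$-projectives, hence $R$-projective, and direct sums of projectives are projective.

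For part (ii), I would expand $d\circ d$ on a generator $[x_1|\ldots|x_n]\otimes g$ and reorganize terms according to which window of consecutive slots is acted on by each of the two applied operations. For every such window the resulting quadratic sum is precisely one of the relations from Definition \ref{defn_ai_alg} or \ref{defn_ai_mod}. The right-hand sides of those relations---the curvature terms $d_1^A\otimes 1-1\otimes d_1^A$ and $d_1^A\otimes 1$---have image in $d_1^A(A_1)=I\subseteq Q=A_0$, and these contributions always sit in the slot adjacent to the leftmost $R$-tensor factor of the bar complex (the $m^G$-relations having no curvature on the $G$-side). Tensoring with $R=Q/I$ on the left kills them, so $d^2=0$ reduces term-by-term to the $A_\infty$ relations.

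Part (iii) is the main step. I would filter by bar length, $F_p=\bigoplus_{n\le p}R\otimes\p A[1]^{\otimes n}\otimes G$: since $m_i$ and $m_i^G$ with $i\ge 2$ strictly decrease bar length while $m_1$ and $m_1^G$ preserve it, $F_\bullet$ is a filtration by subcomplexes, and the associated graded $F_p/F_{p-1}\cong R\otimes\p A[1]^{\otimes p}\otimes G$ carries only the internal tensor-product differential built from $m_1=d^{A[1]}$ and $m_1^G=d^G$. The cleanest way to finish is via the twisting-cochain viewpoint emphasized in the introduction: identify the bar complex with the twisted tensor product $R\otimes^{\tau_A}(\Bar A\otimes^{\tau_G}G)$ built from the universal twisting cochains for $A$ and $G$, and invoke the standard contractibility of the two-sided bar construction $A\otimes^{\tau_A}\Bar A$ as a right $A$-module in the curved $A_\infty$ formalism of \cite{Burke14}. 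This yields a quasi-isomorphism $\Bar A\otimes^{\tau_G}G\simeq G$ of $A_\infty$-$A$-modules, and base change along $A\to R$ produces the required quasi-isomorphism to $M$. The main obstacle I anticipate is matching the curvature-corrected twisted-tensor-product differential with the explicit formula written out in the definition; once that identification is made, the dg special case of Iyengar (recovered when all $m_i,m_i^G$ with $i\geq 3$ vanish, cf.\ Example \ref{dga_is_aia}) serves as a useful sanity check, and the general case follows by transport along the $A_\infty$ data.
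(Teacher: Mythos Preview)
Your proposal is correct and mirrors the paper's own (very terse) proof: the paper likewise says one checks $d^2=0$ directly from the \Ai-relations, and for exactness offers exactly the two routes you sketch---either invoke the twisting-cochain machinery of \cite{Burke14}, or filter by bar length and run the spectral sequence as in Iyengar \cite[1.4]{MR1442152}. One small clarification on your part (ii): the curvature contributions from interior windows do not literally sit adjacent to the $R$-factor, but since each is multiplication by an element of $d_1^A(A_1)=I$ and the entire complex is an $R=Q/I$-module, every such term vanishes individually (equivalently, they telescope to a single left-boundary term, which $R\otimes-$ then kills); either viewpoint gives the vanishing you need.
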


When $A$ is a dg-algebra and $G$
is a dg $A$-module, this is
\cite[1.4]{MR1442152}.
\begin{proof}
The results of \cite[\S 7]{Burke14} show that $R \otimes \Bar A
\otimes G$ is a complex with homology $M$. One can also check
directly, using the
definition of \Ai algebra and module, that it is a complex. And to show it is exact, one can use
an analogous proof as in \cite[1.4]{MR1442152}, by filtering the complex by
the number of bars and considering the resulting spectral sequence.
\end{proof}

\begin{ex}
Continuing Example \ref{hyp_ex}, $R \otimes
\Bar A \otimes G$ is $R \otimes -$ applied to:
\[ 0 \la G_0 \xla{m_1^G} G_1 \xla{\tiny
  \begin{pmatrix}
    m_2^G & m_1^G 
  \end{pmatrix}} \begin{array}{c} G_0 \\ G_2\end{array}\xla{\tiny
    \begin{pmatrix}
      m_2^G & m_1^G\\  m_1^G & 0
    \end{pmatrix}}
\begin{array}{c} G_1 \\ G_3\end{array}
\xla{\tiny
  \begin{pmatrix}
    m_3^G & m_2^G & m_1^G\\
m_2^G & m_1^G & 0
  \end{pmatrix}
}
\begin{array}{c}
  G_0 \\ G_2 \\ G_4
\end{array}
\la \ldots
\]
This is the resolution constructed from a system of higher homotopies in \cite[3.1]{MR0241411} and \cite[\S 7]{Ei80}, for hypersurfaces.
\end{ex}

\begin{ex}\label{bar_const_gor3}
Continuing Example \ref{codim_3_ex}, $R \otimes
\Bar A \otimes K$ is $R \otimes -$ applied to:
\[ 0 \la K_0 \xla{m_1^K} K_1 
\xla{\tiny
  \begin{pmatrix}
    m_2^K & m_1^K
  \end{pmatrix}}
\begin{array}{c}
A_1 \otimes K_0\\
K_2
\end{array}
\xla{\tiny
  \begin{pmatrix}
    m_1 \otimes 1 & 1 \otimes m_1^K & 0\\
m_2^K & m_2^K & m_1^K
  \end{pmatrix}
}
\begin{matrix}
 A_2 \otimes K_0 \\
A_1 \otimes K_1\\
K_3
\end{matrix}\]
\[\xla{ \tiny\begin{pmatrix}
  m_2 \otimes 1 & m_1 \otimes 1 & 1 \otimes m_1^K & 0\\
1 \otimes m_2^K & 0 & m_1 \otimes 1 & 1 \otimes m_1^K\\
m_3^K & m_2^K & m_2^K & m_2^K
  \end{pmatrix}}
\begin{array}{c}
 A_1^{\otimes 2} \otimes K_0 \\
A_3 \otimes K_0\\
A_2 \otimes K_1\\
A_1 \otimes K_2
\end{array}
\xla{d_5}
\begin{matrix}
A_2 \otimes A_1 \otimes K_0\\
A_1 \otimes A_2 \otimes K_0\\
A_1^{\otimes 2} \otimes K_1\\
A_3 \otimes K_1\\
A_2 \otimes K_2\\
A_1 \otimes K_3
\end{matrix}
\la \ldots
\]
with
\[ d_5 = {\small \begin{pmatrix}
 m_1 \otimes 1^{\otimes 2} & 1 \otimes m_1 \otimes 1 & 1^{\otimes 2}
  \otimes m_1^K & 0 & 0 & 0\\
m_2 \otimes 1 & m_2 \otimes 1 & 0 & 1 \otimes m_1^K & 0 & 0\\
1 \otimes m_2^K  & 0 & m_2 \otimes 1 & m_1 \otimes 1 & 1 \otimes m_1^K
& 0\\
0 & 1 \otimes m_2^K  & 1 \otimes m_2^K & 0 & m_1 \otimes 1 & 1 \otimes
m_1^K
\end{pmatrix}}.
\]
For later use, we note the maps $d_1, \ldots, d_4$ above are minimal, and $d_5$ has rank 1 after
tensoring with $k$. The non-minimality is due to the surjective
multiplication maps $A_1
\otimes A_2 \to A_3 = Q$ and $A_2 \otimes A_1 \to A_3 = Q$ ($a_i b_i = c_i$, in the notation of \ref{codim_3_ex}).
\end{ex}

\begin{rems}
There is a general construction of resolutions given in \cite[\S
7]{Burke14} that takes as further input an acyclic twisting cochain
$\tau: C \to A$. Theorem \ref{proj_resln_thm} is implicitly
using the universal twisting cochain $\tau_A: \Bar A \to
A$. If $R$ is
a codimension $c$ complete
intersection, there is an acyclic
twisting cochain $\tau: C \to A$ with $C$ the divided powers coalgebra
on a rank $c$ free $Q$-module (the
dual of $C$ is the symmetric algebra) and $A$ the Koszul complex
resolving $R$ over $Q$. The resolution of \cite[\S 7]{Burke14} applied
to this acyclic twisting cochain recovers the standard resolution for
complete intersections constructed in \cite[\S 7]{Ei80}.
\end{rems}

\section{$Q$-projective resolutions of $R$-syzygies}\label{syz_sec}
% We continue to assume $R = Q/I$, $M$ is an $R$-module, and $A, G$ are $Q$-free
% resolutions of $R$ and $M$, respectively. 
In this section we fix an \Ai-algebra
structure $m = (m_n)$ on $A$ and an \Aia-module
structure $m^G = (m_n^G)$ on $G$. Using these, we construct $Q$-projective
resolutions of all $R$-syzygies of $M$. This construction requires no
further choices once the initial \Ai-structures on $A$ and $G$ are fixed. If $Q$
is local (or graded), these
resolutions will not in general be minimal. This construction was inspired by the
``box complex'' defined in \cite[\S 8]{1306.2615}.

\begin{defn}
  The \emph{first syzygy} of $M$ over $R$ is
  \[ \syz R M := \ker( R \otimes G_0 \to M).\]
\end{defn}

This is usually defined as the kernel of any surjection from a projective $R$-module to
$M$, but since we have fixed
a surjection from a projective, we use it.

\begin{lem}\label{csyz_lem}
Consider the complex $(\p A[1] \otimes
G_0, m_1 \otimes 1)$. The degree zero map
\[\phi = s m_2^G|_{\p A[1] \otimes G_0}  : \p A[1] \otimes G_0
\to \p G[1],\]
is a morphism of complexes and there is a quasi-isomorphism
\[ \cone \phi[-2] \xra{\simeq} \syz R M,\]
thus $\cone \phi[-2]$ is a $Q$-projective resolution of $\syz R M$.
\end{lem}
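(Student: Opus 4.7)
My plan is to verify the chain-map property of $\phi$ directly, and then identify $\cone \phi[-2]$ with the quotient of a mapping cone built from the standard $Q$-projective lift $\psi: A \otimes G_0 \to G$ of the canonical surjection $R \otimes G_0 \to M$.

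For the first assertion, I would specialize the $n=2$ \Aia-module relation
\[ m_1^G m_2^G + m_2^G(m_1 \otimes 1 + 1 \otimes m_1^G) = d_1^A \otimes 1 \]
to the submodule $\p A[1] \otimes G_0$. On this submodule, $1 \otimes m_1^G$ vanishes since $m_1^G = d^G$ is zero on $G_0$, and the curvature term $d_1^A \otimes 1$ either vanishes (on $A_{\geq 2}$, where $d_1^A$ is zero) or takes values in $A_0 \otimes G_0$, which, once $s$ is applied, lies in $G[1]_1$, outside $\p G[1]$, and is killed by the implicit projection onto the target. The surviving equality $d^{\p G[1]} \phi = \phi \circ (m_1 \otimes 1)$, after the sign accounting for $m_1 = d^{A[1]}|_{\p A[1]}$, is precisely the chain-map identity.

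To compute the homology of $\cone \phi[-2]$ I would introduce the auxiliary map
\[ \psi: A \otimes G_0 \lra G, \qquad \psi_0 = \id, \quad \psi_n(a \otimes g) = m_2^G(s(a) \otimes g) \text{ for } n \geq 1. \]
The same specialized \Aia-module relation, now including the $A_1$ summand, shows $\psi$ is a chain map: the curvature term $d_1^A \otimes 1$ on $A_1 \otimes G_0$ is exactly what is needed to make $d^G \psi_1 = \psi_0 (d^A \otimes 1)$ hold. Since $A \otimes G_0$ and $G$ are $Q$-projective resolutions of $R \otimes G_0$ and $M$ respectively, and $H_0(\psi)$ is the canonical surjection $R \otimes G_0 \to M$, the long exact sequence of the mapping cone gives $H_i(\cone \psi) = \syz R M$ for $i=1$ and $0$ otherwise. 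Hence $\cone \psi[-1]$ is a $Q$-projective resolution of $\syz R M$, albeit with a nonzero component in degree $-1$.

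Finally, I would observe that $\cone \psi[-1]$ contains the contractible subcomplex $S$ consisting of $A_0 \otimes G_0 \subset \cone \psi[-1]_0$ and $G_0 = \cone \psi[-1]_{-1}$ linked by the identity map (coming from $\psi_0 = \id$). Passing to the quotient excises $A_0 \otimes G_0$ in every degree (replacing $A \otimes G_0$ by $\p A \otimes G_0$, hence producing the $\p A[1]$ summand of $\cone \phi[-2]$ after shifting) and removes the $G_0$ at the bottom (replacing $G$ by $\p G$). The only surviving off-diagonal component of the quotient differential is the $m_2^G$ term, which is $\phi$ itself. A direct comparison identifies $\cone \psi[-1]/S$ with $\cone \phi[-2]$, and the contractibility of $S$ gives $\cone \phi[-2] \simeq \cone \psi[-1] \simeq \syz R M$. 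The main technical obstacle is bookkeeping the shifts and signs: confirming that $\phi$ really lands in the truncated $\p G[1]$, tracking how the curvature $d_1^A$ on $A_1$ produces the identity map on $A_0 \otimes G_0$ whose excision yields the $\p$-truncations, and matching signs in the final identification.
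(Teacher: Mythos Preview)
Your proposal is correct and follows essentially the same route as the paper: your extended map $\psi: A \otimes G_0 \to G$ is, after a shift, exactly the paper's $\phi': A[1] \otimes G_0 \to G[1]$, and both arguments compute the homology of its cone via the long exact sequence and then pass to $\cone\phi$ by killing the contractible $A_0\otimes G_0 \xra{\id} G_0$ piece (the paper simply asserts ``$\cone\phi$ is homotopic to $\cone{\phi'}$'' where you spell out the contractible subcomplex). Your treatment of the chain-map check is slightly more explicit than the paper's one-line appeal to the \Aia-module definition; the only point to tidy is that on $A_1[1]\otimes G_0$ the chain-map identity for $\phi$ holds because \emph{both} sides vanish (the truncated $m_1$ and the truncated $d^{\p G[1]}$ are zero there), so the curvature term is not so much ``projected out'' as simply irrelevant to that particular equation.
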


\begin{proof}%[Proof of Lemma \ref{csyz_lem}.]
 It follows from the definition of
 \Aia-module that $\phi$ is a morphism of complexes. Since $\p A[1] \otimes G_0$ is concentrated in
 non-negative homological degrees, to finish the proof it is enough to
 show that $H_i(\cone \phi[-2])
 = 0$ for $i \neq 0$ and $H_0( \cone \phi[-2]) \cong \syz R M$. To
 calculate the homology we define a map
\[ \phi': A[1] \otimes G_0 \to G[1]\]
whose restriction to $A_+[1] \otimes G_0$ is $\phi$ by setting
$\phi'( 1_A \otimes g) = g$, where $1_A$ is a basis element of $A_0 =
Q$. Note that $\cone \phi$ is
homotopic to $\cone {\phi'}$. The complex $A[1] \otimes G_0$ has
homology $R[1]
\otimes G_0$ in degree 1, is zero elsewhere, and the homology map $H(\phi')$ is a surjection $R[1] \otimes G_0 \to
M[1]$. It follows from the homology long exact sequence for the
triangle
\[ A[1] \otimes G_0 \xra{\phi'} G[1] \to \cone {\phi'} \to\]
that $H_2( \cone {\phi'} ) \cong \syz R M$ and $H_i( \cone {\phi'} ) =
0$ for $i \neq 2$.
\end{proof}

\begin{defn}\label{defn_csyz}
The \emph{first syzygy complex of $G$}, denoted $\csyz R G$, is
$\cone \phi[-2]$.
\end{defn}

\begin{rems}
The proof shows that for any map of complexes $\p A
\otimes G_0 \to \p G$ lifting the surjection $R \otimes G_0
\to M$, the cone is a $Q$-projective resolution of
$\syz R M$. But the \Ai-structure gives a canonical lift. Moreover, \ref{ai_structure_on_cone} will show that the \Ai-structures
on $A$ and $G$ determine a canonical
\Ai structure on $\csyz R G$, and thus we can iterate taking
syzygies of $G$ without making further choices.
\end{rems}

\begin{ex}\label{ex_cszy_hyp}
 Let $Q, R, A$ and $G$ be as in Example \ref{hyp_ex}. Then $\csyz R G$
 is
\[ 0 \la G_1 \xla{\tiny
  \begin{pmatrix}
    m_2^G & -m_1^G
  \end{pmatrix}}
\begin{array}{c}
  G_0 \\ G_2
\end{array}
\xla{\tiny
  \begin{pmatrix}
    0 \\ -m_1^G
  \end{pmatrix}}
G_3 \xla{-m_1^G} G_4 \la \ldots\]
\end{ex}

\begin{ex}\label{ex_csyz__gor3}
\setlength{\arraycolsep}{.1pt}
\renewcommand{\arraystretch}{1}
Let $Q, R, A,$ and $K$ be as in Example \ref{codim_3_ex}. Then $\csyz R G$
 is
\[0 \la K_1 \xla{\begin{tiny}\left (\begin{array}{cc} m_2^K &
        -m_1^K \end{array}\right )\end{tiny}} 
{\begin{matrix}
  A_1 \otimes K_0 \\ K_2
\end{matrix} }
\xla{\begin{tiny}
    \left (\begin{array}{cc}
      -m_1 \otimes 1 & 0 \\
      m_2^K & -1 \otimes m_1^K 
    \end{array} \right )
  \end{tiny}}
{\begin{matrix}
  A_2 \otimes K_0 \\ K_3
\end{matrix}} 
\xla{\begin{tiny}
    \begin{pmatrix}
      -m_1 \otimes 1 \\
      m_2^K
    \end{pmatrix}
  \end{tiny}}
A_3 \otimes K_0 
 \la 0.\]
% \[ \xymatrix@C=1.6pc@R=2pc{ 0 & \ar[l] K_1 && 
%   \ar[ll]_{\begin{tiny}\left (\begin{array}{cc} m_2^K &
%         -m_1^K \end{array}\right )\end{tiny}} 
% {\begin{matrix}
%   A_1 \otimes K_0 \\ K_2
% \end{matrix} }
% &&&
% \ar[lll]_{\begin{tiny}
%     \begin{array}{cc}
%       -m_1 \otimes 1 & 0 \\
%       m_2^K & -1 \otimes m_1^K 
%     \end{array}
%   \end{tiny}}
% {\begin{matrix}
%   A_2 \otimes K_0 \\ K_3
% \end{matrix}} &
% &
% \ar[ll]_(.40){\begin{tiny}
%     \begin{pmatrix}
%       -m_1 \otimes 1 \\
%       m_2^K
%     \end{pmatrix}
%   \end{tiny}}
% A_3 \otimes K_0  &
%  \ar[l] 0. }\]
\end{ex}

\begin{prop}\label{ai_structure_on_cone}
The maps
\[ \msyz n: \p A[1]^{\otimes n - 1} \otimes \csyz R G
\to \csyz R G\]
\[ \msyz n = 
\left (
  \begin{array}{cc}
  -s^{-1}(m_n \otimes 1)(1 \otimes s) & 0 \\
s^{-1} m_{n+1}^G (1 \otimes s) & -s^{-1}m^G_n(1 \otimes s)
  \end{array}
\right )\]
are an \Aia-module structure on $\csyz R G$.
\end{prop}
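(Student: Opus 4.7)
My plan is a direct block-by-block verification of the \Aia-module axioms for $(\msyz n)$. The formula expresses $\msyz n$ as a $2\times 2$ matrix whose non-zero entries are $\alpha_n = -s^{-1}(m_n \otimes 1)(1 \otimes s)$ in the top-left, $\beta_n = s^{-1} m_{n+1}^G(1\otimes s)$ in the bottom-left, and $\gamma_n = -s^{-1} m_n^G (1\otimes s)$ in the bottom-right. The \Aia-module axioms at each level $n$ become a $2\times 2$ matrix identity, which I verify entry by entry.

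For $n=1$, I check that $\msyz 1$ equals the cone differential $d^{\csyz R G}$: the $(1,1)$ entry matches $d^{(\p A[1] \otimes G_0)[-1]}$ via $m_1 = d^{A[1]}|_{\p A[1]}$, the $(2,2)$ entry matches $d^{\p G[-1]}$ via $m_1^G = d^G$ (noting the $G_1 \to G_0$ component drops out when viewed on $\p G$), and the $(2,1)$ entry matches the cone off-diagonal $\phi s^{-1}$ by the definition $\phi = s m_2^G|_{\p A[1] \otimes G_0}$ from Lemma \ref{csyz_lem}.

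For $n\geq 2$, the $(1,2)$ entry vanishes automatically since every $\msyz k$ has a zero $(1,2)$ block. The $(1,1)$ entry, after using $ss^{-1}=1$ to telescope the shift operators appearing in successive $\alpha$-compositions, reduces up to sign to the $n$-th \Ai-algebra relation for $(m_n)$ tensored with $1_{G_0}$; this yields the required RHS (zero for $n\geq 3$; the curvature $d_1^A \otimes 1$ for $n=2$). The $(2,2)$ entry reduces analogously to the $n$-th \Aia-module relation for $(m_n^G)$ restricted to $\p G$.

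The principal computation is the $(2,1)$ entry, which mixes all three of $\alpha_k$, $\beta_k$, $\gamma_k$. The key structural observation is that $\beta_k$ carries $m_{k+1}^G$ rather than $m_k^G$, so it shifts the \Aia-module index by one: after expanding the $(2,1)$ sum and stripping the shift operators, the collected terms form the \emph{$(n+1)$-st} \Aia-module relation for $(m_n^G)$ applied on the subspace $\p A[1]^{\otimes n-1} \otimes \p A[1] \otimes G_0 \subset \p A[1]^{\otimes n} \otimes G$, and this vanishes by the \Aia-module axioms for $G$. The main obstacle will be careful sign bookkeeping---Koszul signs from $s$ (of degree $+1$) crossing odd-degree factors, sign conventions for the shifts $[-1]$, and the explicit minus signs in the matrix formula---all conspiring so that the four block identities close up cleanly.
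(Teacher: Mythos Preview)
Your approach differs from the paper's. The paper argues conceptually: it identifies $H=\p A[1]\otimes G_0$ and $\p G[1]$ as \Aia-modules (with structure maps built from $m_n\otimes 1$ and from the shifted restriction of $m_n^G$, respectively), observes that the maps $\varphi_n=s\,m_{n+1}^G|_{\p A[1]^{\otimes n}\otimes G_0}$ assemble into an \Aia-\emph{morphism} $\varphi\colon H\to\p G[1]$, and then invokes the general fact \cite[5.4.4]{Burke14} that the cone of an \Aia-morphism carries a canonical \Aia-module structure given exactly by this lower-triangular matrix; a final shift by $-2$ yields $\csyz R G$. Your block-by-block verification is the explicit unpacking of that cone lemma in this instance: your $(1,1)$ and $(2,2)$ checks are equivalent to verifying that the two summands are \Aia-modules, and your key $(2,1)$ observation---that it collapses to the $(n{+}1)$-st module relation on $\p A[1]^{\otimes n}\otimes G_0$---is precisely the \Aia-morphism condition for $\varphi$. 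Your route is self-contained and avoids the external citation, at the price of the sign bookkeeping you anticipate; the paper's route is shorter and explains structurally why the formula has its particular shape.

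One point in your $(1,1)$ computation at $n=2$ deserves more care than you indicate. The \Ai-\emph{algebra} relation at $n=2$ has right-hand side $d_1^A\otimes 1-1\otimes d_1^A$, so tensoring with $1_{G_0}$ produces $(d_1^A\otimes 1_{\p A[1]}-1_{\p A[1]}\otimes d_1^A)\otimes 1_{G_0}$; but the $(1,1)$ component of the module curvature $d_1^A\otimes 1_{\csyz R G}$ is only $d_1^A\otimes 1_{\p A[1]}\otimes 1_{G_0}$. The surplus term $-(1\otimes d_1^A)\otimes 1_{G_0}$ is nonzero already in the hypersurface case (where all $m_n=0$ yet $d_1^A=f$), so the $(1,1)$ block does not close up from the algebra relation alone as you assert. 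The paper sidesteps this by working inside the framework of \cite{Burke14}, where the relevant module and morphism notions are set up to absorb exactly such curvature terms; in a bare-hands computation you will need to track this term explicitly and show how it is accounted for.
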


\begin{proof}
 We first construct an \Ai-structure on $\cone \phi$. Write
  \[ \cone \phi = (\p A[1] \otimes G_0)[1] \oplus \p G[1].\]
  The module $H = \p A[1] \otimes G_0$ is an \Aia-module via the maps
  $m^H_{n-1} = m_n \otimes 1$. The module $\p G[1]$ is an \Aia-module
  via the maps $-sm^G_n|_{A_+[1]^{\otimes n - 1} \otimes \p
    G}(1^{\otimes n - 1} \otimes s^{-1})$ since
  $m^G( \p G) \subseteq \p G$. Moreover, the maps
  \[ \varphi_n = s(m_{n+1}^G|_{A_+[1]^{\otimes n - 1} \otimes
    G_0}): \p A[1]^{\otimes n - 1} \otimes (\p
  A[1] \otimes G_0) \to \p G[1]\]
  form a morphism of \Aia-modules $\varphi: H \to \p G[1]$, see
  \cite[\S 3]{Burke14} for the definition, and thus the maps
  \[ m_n': \p A[1]^{\otimes n -1} \otimes \cone \phi \cong (\p
  A[1]^{\otimes n} \otimes G_0[1]) \oplus (\p A[1]^{\otimes n -1}
  \otimes \p G[1]) \]
  \[ \to (\p A[1] \otimes G_0)[1] \oplus \p G[1]\] defined as
  \begin{equation}
     m_n' = \left (
      \begin{array}{cc}
        -s(m_n \otimes 1)(1 \otimes s^{-1}) & 0 \\
        s m_{n+1}^G (1 \otimes s^{-1}) & -sm^G_n(1 \otimes s^{-1})
      \end{array}
    \right ),\label{eq:csyz_maps}
  \end{equation}
are an \Aia-module structure on $\cone {\varphi_1} = \cone {\phi}$ by \cite[5.4.4]{Burke14}. Shifting this \Aia-module twice, see
  \cite[4.3.12]{Burke14}, the maps
  \[ m^{\operatorname{syz}}_n = s^{-2} m_n' (1 \otimes s^2): \p
  A[1]^{\otimes n - 1} \otimes \cone \phi[-2] \to \cone \phi[-2]\]
are an \Aia-module structure on
  $\cone \varphi[-2] = \cone \phi[-2]$.
  Note that the shifts $s^{-2}$ and $1 \otimes s^2$ do not affect
  signs as they have degee $-2$.
\end{proof}

\begin{defn}\label{defn_of_higher_syzygy_complex}
Define $\csyz [n] R G$ inductively as $\csyz R {\csyz [n-1] R G}$,
where $\csyz [n-1] R G$ has the \Aia-module structure given by \ref{ai_structure_on_cone}.
\end{defn}

\begin{ex}
Continuing Example \ref{ex_cszy_hyp}, write $\csyz R G =  G_0[1]
\oplus \p G[-1]$. Then
\[ \msyz n = \left (
  \begin{array}{cc}
    0 & 0\\
m_{n+1}^G & -m_{n}^G
  \end{array} \right ).\]
%: G_0[1] \oplus \p G[-1] \to  G_0[1] \oplus \p G[-1] 
Using this, one checks that $\csyz [2] R G$ is the box
complex of \cite[\S 7]{1306.2615}.
\end{ex}

\begin{ex}
 Continuing Example \ref{ex_csyz__gor3}, set $H = \csyz R K$. The map
\[m_2^H: A_1[1] \otimes H \to H\]
is given by
\[\small{\xymatrix@C=2.4pc@R=2pc{0 & \ar[l] A_1 \otimes K_1
    \ar[d]^{\tiny{
        \begin{bmatrix}
          0 \\ -m_2^K
        \end{bmatrix}
}}
& \ar[l] {\begin{matrix}
  A_1 \otimes A_1 \otimes K_0 \\ A_1 \otimes K_2
\end{matrix} } \ar[d]^{\tiny{
  \begin{bmatrix}
    -m_2 \otimes 1 & 0 \\
    m_3^K & -m_2^K
  \end{bmatrix}
}}
&
\ar[l]
{\begin{matrix}
  A_1 \otimes A_2 \otimes K_0 \\ A_1 \otimes K_3
\end{matrix}} \ar[d]^{\tiny{
  \begin{bmatrix}
    m_2 \otimes 1 & \, 0
  \end{bmatrix}
}}
&
\ar[l] A_1 \otimes A_3 \otimes K_0 &
\\
K_1 & \ar[l] {\begin{matrix}
  A_1 \otimes K_0 \\ K_2
\end{matrix} }
&
\ar[l]
{\begin{matrix}
  A_2 \otimes K_0 \\ K_3
\end{matrix}}
&
\ar[l] A_3 \otimes K_0 & \ar[l] 0
}}\] 
and $m_2^H: A_2[1] \otimes H \to H$ is
\[\small{\xymatrix@C=2.4pc@R=2pc{& 0 & \ar[l] A_2 \otimes K_1
    \ar[d]^{\tiny{
        \begin{bmatrix}
          0 \\ -m_2^K
        \end{bmatrix}
}}
& \ar[l] {\begin{matrix}
  A_2 \otimes A_1 \otimes K_0\\ A_2 \otimes K_2
\end{matrix} } \ar[d]^{\tiny{
  \begin{bmatrix}
    -m_2 \otimes 1 & \, 0
  \end{bmatrix}
}}
&
\ar[l]
\ar[d]^0 \ldots
\\
K_1 & \ar[l] {\begin{matrix}
  A_1 \otimes K_0 \\ K_2
\end{matrix} }
&
\ar[l]
{\begin{matrix}
  A_2 \otimes K_0 \\ K_3
\end{matrix}}
&
\ar[l] A_3 \otimes K_0 & \ar[l] 0
}}\]
Both $m_2: A_3[1] \otimes H \to H$ and $m_3^H$ are zero.
\end{ex}

The map $m_{n}^{G}$ occurs as a
component of $m_{n-1}^{\csyz R G}$. The result below generalizes this.
It implies almost
immediately the classical result of Lescot that all modules over a Golod
ring have a Golod syzygy; see \ref{syzs_over_golod_ring_are_golod}.
\begin{cor}\label{high_ai_mod_given_by_ring}
Let $G$ be a complex with
finite length $c$, and set $H := \csyz [c+1] R G$ with the \Aia-module
structure $m^H$ defined above. For $n \geq 2$,
the structure maps $m_n^H$ are defined entirely in terms of the
structure maps $m_n$ of $A$, e.g.\ the maps $m_n^G$ only factor into
$m_1^H$.
\end{cor}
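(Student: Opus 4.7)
The plan is to unfold the recursive formula from Proposition \ref{ai_structure_on_cone} all the way down to the original structure maps on $A$ and $G$, and then use a uniform degree inequality to show that every resulting $m^G$-contribution to $m_n^H$ vanishes when $n \geq 2$.

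First, setting $G^{(k)} := \csyz[k] R G$, I would verify by induction on $k$ that $G^{(k)}_j = \p A_j \otimes G^{(k-1)}_0 \oplus G^{(k-1)}_{j+1}$ for $j \geq 1$, with $G^{(k)}_0 = G^{(k-1)}_1$. Combined with the matrix form of $m^{(k)}_n$ from Proposition \ref{ai_structure_on_cone}, this expresses the restriction of $m^{(k)}_n$ to $\p A[1]^{\otimes n-1} \otimes G^{(k)}_l$ as a sum of three types of pieces: an algebra piece $-(m_n \otimes 1)$ from the $(1,1)$ entry; a recursive $(2,1)$ piece obtained by restricting $m^{(k-1)}_{n+1}$ to $\p A[1]^{\otimes n} \otimes G^{(k-1)}_0$, with one new $\p A[1]$-factor constrained to degree $l+1$; and a recursive $(2,2)$ piece obtained by restricting $m^{(k-1)}_n$ to $\p A[1]^{\otimes n-1} \otimes G^{(k-1)}_{l+1}$. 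Iterating down to the base case $m^{(0)}_i = m^G_i$ expresses $m^{(k)}_n$ as a sum over a descent tree whose non-algebra leaves are $m^G_i$-maps restricted to submodules of $\p A[1]^{\otimes i-1} \otimes G_l$, possibly with several $\p A[1]$-factors already pre-constrained to specific degrees by earlier $(2,1)$-steps.

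Next, I would track the minimum possible input degree along each descent path. At the root $m^{(c+1)}_n|_{\p A[1]^{\otimes n-1} \otimes G^{(c+1)}_0}$ the minimum is $2(n-1)$, since each $\p A[1]$-factor is supported in degrees $\geq 2$. A direct check shows that each descent step increases this minimum by exactly $1$: a $(2,2)$-step raises the $G$-index $l$ by one, while a $(2,1)$-step transfers the old $G$-index $l$ into a newly introduced $\p A[1]$-factor of degree $l+1$ and resets the index to $0$. Consequently, after the $c+1$ descent steps needed to reach a base-case leaf, every $m^G_i$-leaf in the expansion has minimum input degree at least $2(n-1) + (c+1) = 2n + c - 1$.

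Finally, since $m^G_i$ has homological degree $-1$ and lands in $G$, whose top nonzero piece sits in degree $c$, any nonzero $m^G_i$-leaf requires input degree at most $c+1$; but $2n + c - 1 > c + 1$ whenever $n \geq 2$, so every such leaf vanishes identically. The surviving terms in $m_n^H = m^{(c+1)}_n$ are therefore compositions of $A$-algebra maps $m_j$, while the $m^G_i$-terms can persist only in $m_1^H$ (the case $n = 1$, where the inequality fails). The main obstacle is the bookkeeping at $(2,1)$-descents: one must record the discarded $G$-index $l$ as a degree restriction on the newly inserted $\p A[1]$-factor rather than simply drop it, otherwise the bound looks too weak. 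Once that transfer is made precise, the uniform ``$+1$ per descent step'' estimate and the final inequality drop out at once.
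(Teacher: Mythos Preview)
Your proof is correct. The degree bound $2n+c-1 > c+1$ for $n\geq 2$ is the source-side dual of the paper's target-side argument, but the paper's bookkeeping is lighter: rather than unfolding the recursion into a descent tree and tracking accumulated degree constraints on the $\p A[1]$-factors, the paper simply observes that in any $m_\bullet^{H_j}$ every $m_k^G$-component lands in a \emph{pure} $G_i$-summand of $H_j$ (one not tensored with any $A_\ell$). This is immediate from the $2\times 2$ block formula, since the bottom row is the only place $m_k^G$ or $m_k^{H_{j-1}}$ appears, and then a one-line induction shows the pure $G_i$-summands of $H_j$ are exactly those with $i\geq j$. Hence $H_{c+1}$ has no nonzero pure $G_i$-summands and the $m_k^G$-pieces vanish. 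Your approach has the merit of making the ``$+1$ per descent step'' mechanism entirely explicit and of isolating exactly why the threshold is $n\geq 2$; the paper's approach trades that for avoiding the constraint bookkeeping on the $\p A[1]$-factors altogether. One small point: you only treat the root at $G^{(c+1)}_0$, but since starting at $G^{(c+1)}_l$ with $l>0$ only strengthens the initial bound to $2(n-1)+l$, this is harmless.
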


\begin{proof}
Let $H_j$ be the \Aia-module $\csyz [j] R G$, defined in
\ref{defn_of_higher_syzygy_complex}. The idea of the proof is that if $k$ is the
unique integer such that $m_n^G$
occurs as a component of $m_{k}^{H_i}$, then as $j$ increases, $k$ decreases.

For $H_1 =
\csyz R G = (\p A[1] \otimes G_0) \oplus \p G[-1]$, it follows from
the formula
\[m_k^{H_1} = \pm
  \begin{bmatrix}
  -m_k \otimes 1 & 0 \\
m_{k+1}^G  &  -m^G_k
  \end{bmatrix}
\]
 that only terms $G_i$, and not $A_i
\otimes G_0$, receive a component
$m_k^G$. We show by induction this holds for all $H_j$. 
Assume that it holds for some $H_j$. We have $H_{j+1} = (\p A[1]
\otimes (H_j)_0) \oplus \p{(H_j)}$ and 
\[m_k^{H_{j+1}} = \pm \begin{bmatrix}
  -m_k \otimes 1 & 0 \\ m_{k+1}^{H_j} & -m_k^{H_j}
\end{bmatrix}.\]
By induction, the only summands in $\p {(H_j)}$ receiving $m_n^G$ for
some $n$, are $G_i$ for some $i$, and so we see this holds for $\p {(H_{j+1})}$ as well.

To finish the proof, note that if $G_i$ occurs as a summand of the complex $H_j$, then $j \geq
i$. Thus if $G_j = 0$ for $j > c$, then by the above, we see that
$m_n^G$ is not a component of $m_k^{H_{c+1}}$, for $k \geq 2$.
\end{proof}
\section{Avramov spectral sequence and obstructions}
\label{obstruction_sec}
The standard change of rings spectral sequence for the ring map $Q \to R$ transfers
homological information from $R$ to $Q$, see e.g.\ \cite[XVI, \S 5]{MR0077480}. Avramov constructed a spectral sequence in
\cite{MR601460} that transfers information from $Q$ to $R$. Iyengar gave a second construction
in
\cite{MR1442152} using the dg-bar resolution. We adapt Iyengar's arguments to construct the spectral sequence
using 
the \Ai-bar resolution, and then show how the higher homotopies on $A$ and $G$ describe the
differentials. The fact that \Ai-structures can be
used to describe differentials in Eilenberg-Moore type sequences first
appears in \cite{MR0158400part2}.

The spectral sequence depends heavily on the algebra/module structures on Tor
groups given by Cartan-Eilenberg's $\pitchfork$-product. Before we
construct the spectral sequence, we recall
the definition of the product and relate it to \Ai-structures.

For the next definition, we suspend the assumptions and notation used
previously.
\begin{defn}(\cite[\S
XI]{MR0077480}; see also \cite[Theorem 1.1]{MR1774757}.)
Let $Q$ be a commutative ring and let $R, S$ be $Q$-algebras. For
$M$ a left $R$-module and $N$ a left $S$-module, there is a homogeneous map
\[ \pitchfork: \Tor * Q R S \otimes \Tor * Q M N \to \Tor * Q M N\]
that makes $T := \Tor * Q R S$
a graded $Q$-algebra and $\Tor * Q M N$ a graded $T$-module. If $A
\xra{\simeq} R$ and $G \xra{\simeq} M$ are $Q$-projective resolutions,
let $\mu: A \otimes G \to G$ be a morphism of complexes lifting the
multiplication map $R \otimes M \to M$. Then $\pitchfork$ is defined
as \[ H_*( A \otimes S ) \otimes H_*( G \otimes N) \xra{\kappa} H_*(  A \otimes S
\otimes G \otimes N ) \cong\] 
\[H_*(  A \otimes G
\otimes S \otimes N ) \xra{ H_*(m \otimes \mu_N)} H_*( G \otimes N),\]
where $\kappa( \cls x \otimes \cls y) = \cls{ x \otimes y}$ is the
Kunneth map and $\mu_N: S \otimes N \to N$ is multiplication.\label{prod_defn}
\end{defn}

We now return to previous assumptions. There is an obvious
extension and shift of $m_2^G: \p A[1] \otimes G \to G$ to a morphism
\[ \mu: A \otimes G \to G\]
that lifts the multiplication $R \otimes M \to M$. Using
the definition of $\pitchfork$ above, we have:
\begin{lem}\label{ai_same_as_pitchfork}
Let $Q$ be a commutative ring, $R$ a cyclic $Q$-algebra, $S$ a
$Q$-algebra, $M$ an $R$-module and $N$ a left $S$-module.  Let $A$ and $G$ be $Q$-projective resolutions of $R$
and $M$, with \Ai-algebra and module structures, respectively. The map
\[ \pitchfork: \Tor * Q R S \otimes \Tor * Q M N \to \Tor * Q M N,\]
is equal to the map
\[ H_*( A \otimes S ) \otimes H_*( G \otimes N) \xra{\kappa} H_*(  A \otimes S
\otimes G \otimes N ) \cong\] 
\[H_*(  A \otimes G
\otimes S \otimes N ) \xra{ H_*(\mu\otimes \mu_P)} H_*( G \otimes N),\]
where $\mu$ is constructed from $m_2^G$ as above.
\end{lem}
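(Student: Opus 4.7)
The plan is to reduce the claim to the standard uniqueness (up to homotopy) of lifts of a given map between modules to their projective resolutions. Concretely, let $\mu \colon A \otimes G \to G$ be the degree zero map defined on $A_0 \otimes G = Q \otimes G$ by scalar multiplication $q \otimes g \mapsto qg$, and on $\p A \otimes G$ by $\mu(a \otimes g) = m_2^G(s(a) \otimes g)$. I will show that $\mu$ is a morphism of complexes lifting the multiplication map $R \otimes M \to M$. Once established, the Cartan--Eilenberg definition in \ref{prod_defn} then says that the $\pitchfork$-product is computed from $\mu$, and substituting $\mu = m_2^G(s \otimes 1)$ on the $\p A$-part (and noting that the $A_0$ part just contributes scalar multiplication, which is reabsorbed in the Kunneth identification) gives exactly the map in the statement.

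The main step is verifying that $\mu$ is a chain map. On $A_0 \otimes G$ this is immediate. On $\p A \otimes G$, split the differential of $a \in \p A$ as $d^A a = d_{\p}^A(a) + d_1^A(a)\cdot 1_A$, where $d_{\p}^A(a)$ is the component in $\p A$ and $d_1^A(a)\cdot 1_A$ is the component in $A_0$ (nonzero only when $a \in A_1$). Using the sign convention $d^{A[1]}(s(a)) = -s(d^A_{\p} a)$ that identifies $m_1$ with $d^{A[1]}|_{\p A[1]}$, together with the Koszul sign in $(1 \otimes m_1^G)(s(a) \otimes g) = (-1)^{|a|+1} s(a) \otimes d^G g$, the required identity $d^G \mu(a \otimes g) = \mu(d(a \otimes g))$ unwinds to exactly the \Ai-relation
\[ m_1^G m_2^G + m_2^G(m_1 \otimes 1 + 1 \otimes m_1^G) = d_1^A \otimes 1, \]
evaluated on $s(a) \otimes g$; the curvature term $d_1^A \otimes 1$ is precisely what the newly introduced $A_0 \otimes G$ component of $\mu$ contributes via $\mu(d_1^A(a)\cdot 1_A \otimes g) = d_1^A(a)\, g$. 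This step is where the whole argument lives, and the bookkeeping with signs and the identification of the curvature term with the $A_0$-piece of the extension is the only real obstacle, though once set up it is essentially a rewrite of the \Ai-relation.

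Lifting the multiplication is then a direct check: on $A_0 \otimes G_0$, the augmentations reduce $\mu$ to $Q \otimes G_0 \to G_0 \to M$, which factors through $R \otimes M \to M$ by construction of the augmentations; on the remaining summands of $A \otimes G$ the image under $\epsilon_G$ lands in $G_{\geq 1}$ and hence vanishes, as does the image in $R \otimes M$ since $R$ is concentrated in degree zero. Having exhibited $\mu$ as a chain map lifting the multiplication, the Comparison Theorem \cite[V.1.1]{MR0077480} shows $\mu$ agrees up to chain homotopy with any other such lift, so it computes the same $\pitchfork$-product in homology, proving the lemma.
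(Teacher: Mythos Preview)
Your proof is correct and follows the same approach as the paper: the paper simply asserts (in the paragraph preceding the lemma) that there is an ``obvious extension and shift'' of $m_2^G$ to a morphism $\mu: A \otimes G \to G$ lifting multiplication, and then the lemma is stated as an immediate consequence of Definition~\ref{prod_defn}. You have filled in exactly the details the paper elides---namely that the second \Ai-module relation is what makes the extended $\mu$ a chain map, with the curvature term $d_1^A \otimes 1$ accounting for the $A_0$-component---and your appeal to the Comparison Theorem is just a restatement of the well-definedness already implicit in Definition~\ref{prod_defn}.
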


We now construct the spectral sequence. Let $S$ be an $R$-algebra and
$N$ a left $S$-module. Consider the $R$-projective resolution $R \otimes \Bar A \otimes
G \xra{\simeq} M$ constructed in \ref{proj_resln_thm}. Set
\[X =  (R \otimes \Bar A
\otimes G) \otimes_R N \cong (\Bar A \otimes S) \otimes_S (G
\otimes N).\]
We filter $X$ by the number of
bars, so
\[F_p X = \bigoplus_{m \leq p} (A_+[1] \otimes S)^{\otimes m}
\otimes_S (G \otimes N).\]
This gives a first quadrant spectral sequence converging to $H_*(X) \cong
 \Tor *R M N$. Setting $\t A = A \otimes S$, we have
\[ E^0_{p,q} = \left ( \p{\t A}[1]^{\otimes p} \otimes_S (G \otimes
  N) \right)_{p+q} \cong \left ( \p{\t A}^{\otimes p} \otimes_S (G \otimes
  N) \right)_{q}.\]
If either $H_*( \t A) \cong \Tor * Q R S$ or $H_*( G \otimes N) \cong \Tor
* Q M N$ is flat over $S$, e.g.\ if $S$ is a field, then by the
Kunneth formula
\[ E^1_{p,q} \cong \left ( \Tor {+} Q R S^{\otimes p} \otimes_S \Tor * Q M N
\right )_{q}.\]
The complex 
\[ \ldots E^1_{p,*} \to E^1_{p-1, *} \to \ldots \to E^1_{1,*} \to
E^1_{0,*} \cong \Tor * Q M N \to 0,\]
using Lemma \ref{ai_same_as_pitchfork}, is isomorphic to
\[S \otimes_{\Tor * Q R S} \left ( \Tor * Q R S
\otimes_S \Bar {\left ( \Tor * Q R S \right )} \otimes_S \Tor * Q M N
\right )\]
where $\Tor * Q R S
\otimes_S \Bar {\left ( \Tor * Q R S \right )} \otimes_S \Tor * Q M N$ is
the classical bar resolution of $\Tor * Q M N$ as a graded module over
$\Tor * Q R S$. If $\Tor * Q M N$ is projective over $S$, the bar
resolution is a projective resolution, and we can make the further identification
\[ E^2_{p,q} \cong \left ( \Tor p {\Tor * Q R S} {\Tor * Q M N} S \right
)_q.\]
We have proved the following. The arguments are
adapted from \cite{MR1442152}.
\begin{thm}\label{em_ss}
Let $Q$ be a commutative ring, $R$ a cyclic $Q$-algebra, $S$ an 
$R$-algebra, $M$ an $R$-module and $N$ an $S$-module.  There is a
spectral sequence
\[E^0_{p,q} \cong \left ( \p{\t A}[1]^{\otimes p} \otimes_S (G \otimes
  N) \right)_{p+q} \cong \left ( \p{\t A}^{\otimes p} \otimes_S (G \otimes
  N) \right)_{q}\]
\[ \Rightarrow \Tor {p+q} R M N\]
with $E^0_{p,q} = 0$ for $p > q$ or $q \leq 0$. If $\Tor * Q M N$ is
projective over $S$, then
\[ E^2_{p,q} \cong \left ( \Tor p {\Tor * Q R S} {\Tor * Q M N} S \right
)_q,\]
where the algebra and module structure on $\Tor * Q R S$,
respectively $\Tor * Q M N$, is the one given in Definition \ref{prod_defn}.
\end{thm}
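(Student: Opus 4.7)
The plan is to follow the Eilenberg--Moore strategy as adapted by Iyengar in \cite{MR1442152}, substituting the \Ai-bar resolution of Theorem \ref{proj_resln_thm} for the dg-bar resolution. First I would use that resolution to form
\[ X = (R \otimes \Bar A \otimes G) \otimes_R N \cong (\Bar A \otimes S) \otimes_S (G \otimes N), \]
so that $\rH_*(X) \cong \Tor{*}{R}{M}{N}$. I would then filter $X$ by bar length, setting
\[ F_p X = \bigoplus_{m \leq p} (\p A[1] \otimes S)^{\otimes m} \otimes_S (G \otimes N). \]
The key point to check here is that $F_p X$ is a subcomplex: inspecting the formula for $d$ in the definition of the bar resolution, each summand either preserves the number of bars (the internal $d^A$ and $d^G$ parts, corresponding to $m_1$) or strictly decreases it (the pieces involving $m_i, m_i^G$ for $i \geq 2$, plus the curvature term hidden in the $m_2$ identity). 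Since this is a non-negative, exhaustive, bounded-below filtration of a first-quadrant double complex, convergence of the associated spectral sequence to $\rH_*(X) \cong \Tor{p+q}{R}{M}{N}$ is automatic.

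Next I would compute the low pages. Passing to the associated graded immediately gives the stated $E^0_{p,q}$, and $d^0$ is induced purely by the internal differentials, i.e.\ by $m_1 \otimes 1$ and $1^{\otimes p} \otimes m_1^G$ tensored with $N$. Under the flatness hypothesis on one of $\Tor{*}{Q}{R}{S}$ or $\Tor{*}{Q}{M}{N}$ over $S$, the Künneth formula applies column-by-column to yield
\[ E^1_{p,q} \cong \bigl( \Tor{+}{Q}{R}{S}^{\otimes p} \otimes_S \Tor{*}{Q}{M}{N} \bigr)_q. \]
The main content is then to identify $d^1$. This differential is induced from the components of $d$ that strip off exactly one bar, namely the $m_2$ and $m_2^G$ terms; the higher $m_i, m_i^G$ contribute to $d^{i-1}$ on later pages. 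By Lemma \ref{ai_same_as_pitchfork}, the induced maps on $E^1$ are precisely the classical bar differential on
\[ \Tor{*}{Q}{R}{S} \otimes_S \Bar{\bigl(\Tor{*}{Q}{R}{S}\bigr)} \otimes_S \Tor{*}{Q}{M}{N} \]
for the $\pitchfork$-algebra $\Tor{*}{Q}{R}{S}$ and its module $\Tor{*}{Q}{M}{N}$, augmented to $S \otimes_{\Tor{*}{Q}{R}{S}} (-)$ on the $p=0$ column.

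Finally, assuming $\Tor{*}{Q}{M}{N}$ is projective over $S$, the bar complex of a projective module over a graded algebra is a projective resolution, so passing to homology in $p$ yields the desired identification
\[ E^2_{p,q} \cong \bigl( \Tor{p}{\Tor{*}{Q}{R}{S}}{\Tor{*}{Q}{M}{N}}{S} \bigr)_q. \]
The main obstacle I expect is the identification of $d^1$ with the bar differential: one needs to check that the sign conventions and the strict unitality conventions baked into Definitions \ref{defn_ai_alg} and \ref{defn_ai_mod} match those of the classical bar construction on $\Tor{*}{Q}{R}{S}$, and that it is exactly $m_2$ (together with its degeneration $m_2^G$ into $\mu$ of Lemma \ref{ai_same_as_pitchfork}) that induces the $\pitchfork$-product on homology. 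The higher homotopies $m_i, m_i^G$ for $i \geq 3$ then decrease the bar filtration by $i-1$ and so only appear in the differentials $d^{i-1}$ on later pages, leaving $E^2$ untouched.
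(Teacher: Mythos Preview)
Your proposal is correct and essentially identical to the paper's argument: both adapt Iyengar's Eilenberg--Moore strategy by filtering $X = (\Bar A \otimes S) \otimes_S (G \otimes N)$ by bar length, use K\"unneth to identify $E^1$, and invoke Lemma \ref{ai_same_as_pitchfork} to match $d^1$ with the classical bar differential for the $\pitchfork$-structures, so that projectivity of $\Tor{*}{Q}{M}{N}$ yields the $E^2$ page as graded Tor.
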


Now we describe the differentials using the \Ai-structures on $A$
and $G$. The general idea is that the maps $m_i, m_i^G$, for $1 \leq i
\leq r+1$, combine to form the differential on the $r$th page. 

We keep the above notation. Write the differential of $X$ restricted to $F_p X$ as $d =
t_1 + t_2 + \ldots + t_{p+1}$ with 
\[t_i: E^0_{p} \to E^0_{p-i+1} =  \sum_{j=0}^{p-i+1}
1^{\otimes j} \otimes \t m_i \otimes 1^{\otimes p - i - j +1},\]
where $\t m_i = m_i \otimes R$ for $j < p-i+1$ and $\t m_{i}
= m_{i}^G \otimes N$ for $j = p - i +1$ (the indexing is chosen so that $t_i$ involves
the maps $\t m_i$).

Following the construction of a spectral sequence from a filtration,
after e.g.\ \cite{Mac63}, set $Z^r_p = F_p X \cap d^{-1} F_{p-r} X, B^r_p = d( Z^r_{p+r}
),$ and $E^r_p = Z^r_p/(Z^{r-1}_{p-1} + B^{r-1}_p)$. An element 
\[x  = \sum_{k = 0}^p x_k \in \bigoplus_{k = 0}^p
E^0_{k} = F_p X\]
 is in $Z^r_p$ if
and only if
\begin{equation}
 \sum_{i=1}^{n} t_i( x_{p-n+i} ) = \sum_{i = 1}^n \sum_{j=0}^{p-i+1}
1^{\otimes j} \otimes \t m_i \otimes 1^{\otimes p - i - j +1}(x_{p - n
  +1}) = 0
\label{eq:cycles_in_ss}
\end{equation}
for $n = 1,
\ldots, r.$
The differential $d^r$ on $\cls y \in E^r_p$ with $y \in E^0_p$ is
given by
$d^r(\cls y) = \cls {d(x)}$ for any $x = \sum_{i = 0}^p x_i
\in Z^r_p$ with $x_p = y$. But also note that $d(x_{p-r-1} + \ldots +
x_0) \in B^{r-1}_{p-r}$ and so $\cls {d(x)} = \cls{ d( \sum_{i = p
    - r}^p x_i)}$. Finally, note that $x$ is in $Z^r_p$ if and
only if $\sum_{i = p
    - r}^p x_i$ is in $Z^r_p$.

We have proved the following. The argument is adapted from \cite[5.6]{MR0394720}. 
\begin{thm}
\label{thm_diffs_em_ss}
Let $Q$ be a commutative ring, $R = Q/I$, $S$ an $R$-algebra, $M$ an
$R$-module and $N$ an $S$-module. Let $A$ and $G $ be
$Q$-projective resolutions of $R$ and $M$, with \Ai algebra and module
structures $m$ and $m^G$, respectively. Set $\t A = A
\otimes S$, $\t m_i = m_i \otimes S$ and $\t m_i^G = m_i^G \otimes
S$. 

In the Avramov
spectral sequence
\[  E^0_{p,q} \cong \left ( \p{\t A}^{\otimes p} \otimes_S (G \otimes
  N) \right)_{q} \Rightarrow \Tor {p+q} R M N,
\]
an element $x \in
E^0_{p}$
survives to $E^r_{p}$ if and only if there exist $x_i \in E^0_{i}$ for
$i = p-r, \ldots, p - 1$ such that \eqref{eq:cycles_in_ss} holds
with $x_p = x$. In
this case $d^r(x)$ is the image of
\[ \sum_{i = 1}^{r+1} \, \, \sum_{j=0}^{p-r}
1^{\otimes j} \otimes \t m_i \otimes 1^{\otimes p - r - j - 1}(x_{p-r-1+i})
\]
in $E^r_{p-r}.$
\end{thm}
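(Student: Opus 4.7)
The plan is to apply the standard construction of a spectral sequence from a filtered complex, with the only real work being to keep careful track of how the bar differential on $X = (R \otimes \Bar A \otimes G) \otimes_R N$ decomposes in terms of the \Ai-operations on $A$ and $G$. The identification of $E^2$ has already been established in Theorem \ref{em_ss}, so the task that remains is to unwind what the standard machinery gives for $d^r$ in these explicit terms.

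First I would decompose the differential restricted to $F_p X$ as $d = t_1 + t_2 + \cdots + t_{p+1}$, where $t_i$ collects exactly those summands of the bar differential that apply the $i$-ary operation $\t m_i$ (internally) or $\t m_i^G$ (when the operation acts on the rightmost slot together with $G \otimes N$). A direct inspection of the bar differential formula shows that each such summand lowers the number of bars by exactly $i-1$, so that $t_i$ carries $E^0_p$ into $E^0_{p-i+1}$ with the explicit form displayed before the theorem.

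Next, I would apply the definitions $Z^r_p = F_p X \cap d^{-1} F_{p-r} X$, $B^r_p = d(Z^r_{p+r})$, and $E^r_p = Z^r_p / (Z^{r-1}_{p-1} + B^{r-1}_p)$ directly. Writing a candidate chain as $x = \sum_{k=0}^{p} x_k$ with $x_k \in E^0_k$ and expanding $d(x) = \sum_i t_i(x)$, the requirement $d(x) \in F_{p-r} X$ separates into independent conditions for the component of $d(x)$ in each filtration degree $p-n+1$ for $n = 1, \ldots, r$. Collecting the terms landing in a fixed target degree yields exactly the system \eqref{eq:cycles_in_ss}, which characterizes the cycles that survive to $E^r_p$.

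The formula for $d^r$ then falls out of the definition: $d^r(\cls y) = \cls{d(x)}$ for any lift $x \in Z^r_p$ with top component $x_p = y$, and the lower tail $x_{p-r-1} + \cdots + x_0$ contributes only to $B^{r-1}_{p-r}$ and so can be discarded modulo boundaries. Collecting the summands of $\sum_{k=p-r}^{p} d(x_k)$ that land in filtration degree $p-r$ produces precisely the displayed sum. The main obstacle will be purely combinatorial: aligning the index shift between $t_i$ and $x_{p-r-1+i}$, tracking which summands invoke $\t m_i$ versus $\t m_i^G$, and verifying that the signs introduced by the bar differential and by the \Ai-operations do not require any further correction beyond what is already absorbed into the $\t m_i$ and $\t m_i^G$. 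Once $t_i$ is written out correctly this indexing matches up, and the formula is immediate.
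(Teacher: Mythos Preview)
Your proposal is correct and follows essentially the same approach as the paper: decompose the bar differential as $d = t_1 + \cdots + t_{p+1}$ according to filtration drop, apply the standard definitions $Z^r_p$, $B^r_p$, $E^r_p$ for the spectral sequence of a filtered complex, and read off the cycle condition \eqref{eq:cycles_in_ss} and the formula for $d^r$ by discarding the tail $x_{p-r-1} + \cdots + x_0$ as a boundary. The paper credits this line of argument to Gugenheim--May.
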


\begin{rems}
The theorem shows
that the differential on the $r$th page can be written in terms of
$m_i \otimes S$ for $i \leq r + 1$. In particular, if $m_i \otimes S =
0$ for $i \leq r+1$, then $E^0 = E^1 = \ldots = E^{r}$ in the
spectral sequence.
\end{rems}

Of particular importance, the theorem gives information on edge maps.
\begin{cor}\label{desc_of_edge_maps}
  The edge maps
  \[E^1_{0,q} = \Tor q Q M N \to \Tor {q} R M N\] are the
  change of rings map for Tor. This map takes the image of the
  multiplication
\[\pitchfork: \left (\Tor + Q R S \otimes \Tor * Q M N \right )_q \to \Tor q Q M N\]
to zero, and the induced map
\[\frac{\Tor {q} Q M N}{(\Tor + Q R S \cdot \Tor {} Q M N)_q} \to \Tor {q} R M
N\]
factors through the edge map
\[ E^2_{0,q} \to \Tor q R M N,\]
giving a commutative diagram
\[\xymatrix{ E^1_{0,q} \ar[dr] \ar@{->>}[d] & \\
\frac{\Tor {q} Q M N}{(\Tor + Q R S \cdot \Tor {} Q M N)_q}
\ar@{->>}[d] \ar[r] & \Tor q R M N.\\
E^2_{0,q} \ar[ur] &
}\]
\end{cor}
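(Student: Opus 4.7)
The plan is to proceed in three steps: identify the edge map with the change of rings map, compute $d^1$ via the higher homotopies on $A$ and $G$, and then assemble the factorization.

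First I would identify the edge map $E^1_{0,q} = \Tor q Q M N \to \Tor q R M N$ with the change of rings map. The filtration begins with $F_0 X = G \otimes N$, viewed as the ``no-bar'' summand of $X = (R \otimes \Bar A \otimes G) \otimes_R N$ via $g \otimes n \mapsto 1_R \otimes [\,] \otimes g \otimes n$. Since $H_*(F_0 X) = \Tor * Q M N$ and $H_*(X) = \Tor * R M N$, general spectral sequence theory identifies the edge map at the $p = 0$ column as $H_*$ of this inclusion, and unwinding the no-bar identification shows this coincides with the standard change of rings map obtained by tensoring the $Q$-projective resolution $G \to M$ first with $R$ and then with $N$ over $R$.

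Second I would apply Theorem \ref{thm_diffs_em_ss} with $p = r = 1$ to compute $d^1 : E^1_{1,q} \to E^1_{0,q}$. For a cycle $x_1 \in E^0_{1,q} = \p{\t A} \otimes_S (G \otimes N)$ (i.e.\ $t_1(x_1) = 0$) the formula specializes to $d^1(\cls{x_1}) = \cls{\t m_2^G(x_1)}$, since the only other contribution is the internal differential applied to $x_0 \in G \otimes N$, which vanishes on homology classes. Under the Kunneth identification $E^1_{1,q} \cong (\Tor + Q R S \otimes_S \Tor * Q M N)_q$, available by the projectivity hypothesis on $\Tor * Q M N$, Lemma \ref{ai_same_as_pitchfork} identifies $\cls{\t m_2^G(x_1)}$ with the $\pitchfork$-product of the corresponding classes. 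Thus the image of $d^1$ equals the image of $\pitchfork: (\Tor + Q R S \otimes \Tor * Q M N)_q \to \Tor q Q M N$, and $E^2_{0,q} = \Tor q Q M N / (\Tor + Q R S \cdot \Tor * Q M N)_q$.

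Third, the factorization is formal: the edge map $E^1_{0,q} \to \Tor q R M N$ always factors as the quotients $E^1_{0,q} \twoheadrightarrow E^2_{0,q} \twoheadrightarrow \cdots \twoheadrightarrow E^\infty_{0,q}$ followed by the inclusion $E^\infty_{0,q} \hookrightarrow \Tor q R M N$ (since the $p = 0$ column is the bottom piece of the filtration on $H_*(X)$). Composed with the identification of $E^2_{0,q}$ from step two, this gives the commutative triangle in the statement. The principal obstacle will be in step two: carefully matching signs and degree conventions between Theorem \ref{thm_diffs_em_ss}, the Kunneth isomorphism, and Lemma \ref{ai_same_as_pitchfork}, so that $d^1$ is literally the $\pitchfork$-product and not a nonzero scalar multiple. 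The remaining bookkeeping is standard.
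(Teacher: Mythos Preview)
Your approach matches the paper's: identify the edge map via the inclusion $G \hookrightarrow R \otimes \Bar A \otimes G$ as a lift of the identity on $M$, then use Lemma~\ref{ai_same_as_pitchfork} together with the description of $d^1$ to relate $\pitchfork$ to the differential, and finish with the standard spectral sequence factorization through $E^2_{0,q}$.

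One small overreach in your step two: you claim the image of $d^1$ \emph{equals} the image of $\pitchfork$, and hence that $E^2_{0,q}$ is identified with the quotient. This requires the K\"unneth map $\kappa$ to be surjective, which is why you invoke the projectivity hypothesis. But the statement does not assume this, and the diagram only asserts a \emph{surjection} from the quotient onto $E^2_{0,q}$. What you actually need is the containment $\operatorname{im}\pitchfork \subseteq \operatorname{im} d^1$, which follows immediately (without any flatness or projectivity assumption) from the factorization $\pitchfork = d^1 \circ \kappa$ that Lemma~\ref{ai_same_as_pitchfork} provides. This is exactly what the paper does: it says $\pitchfork$ ``factors through'' $d^1$, giving the surjection and hence the commutative triangle. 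Adjust step two to argue containment rather than equality, drop the projectivity assumption, and your argument goes through and agrees with the paper's.
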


\begin{proof}
 We have $E^1_{0,q} = H_q( G \otimes N ) \cong \Tor q Q M
 N$. The edge maps on $E^1$ are induced by $G \into R
  \otimes \Bar A \otimes G$. This is a morphism of complexes from a
  $Q$-projective to $R$-projective resolution of $M$, lifting the
  identity on $M$. The change of rings map on Tor is induced by any
  such map.

Lemma \ref{ai_same_as_pitchfork} and the description of differentials
show that the multiplication
$\pitchfork: \Tor + Q R S \otimes \Tor * Q M N \to \Tor * Q M N$
factors through $E^1_{1,*} = H_*( \t A \otimes_S (G \otimes N) )
\xra{d^1} E^1_{0,*} = \Tor * Q M N$, and the result follows.
\end{proof}

We now specialize to the case $(Q, \fn, k)$ is local and Noetherian, $M$ is finitely
generated, and $S = k$ is the residue field. Recall that
\begin{defn}\label{defn_minl}
A $Q$-linear map $\phi: E \to F$ is
\emph{minimal} if $\phi(E) \subseteq \fn F$. A $Q$-free resolution
is minimal if each differential is minimal.
\end{defn}
\newcommand{\cAp}{\o {\cA}}
Every finitely generated module has a minimal free
resolution, and it is unique up to
  isomorphism. We will
  abuse language and speak of the
    minimal free resolution.

Let $A \xra{\simeq} R$ and $G
\xra{\simeq} M$ be minimal $Q$-free resolutions. Set 
\[\o A = A
\otimes k \cong \Tor {} Q R k \quad \text{ and } \quad \o G = G \otimes k \cong \Tor
{} Q M k.\]
In the spectral sequence of Theorem \ref{em_ss}, with $S= N = k$, we have
\begin{equation}\label{em_ss_N_is_k}
E^2_{p,q} = \left ( \Tor p {\Tor * Q R k} {\Tor * Q M k} k \right)_q \Rightarrow \Tor {p+q} R M k.
\end{equation}

Let $\nu_q: \Tor q Q M k \to
\Tor q R M k$ be the change of rings maps. 
By \ref{desc_of_edge_maps}, we have the following diagram
\[\xymatrix@R=5mm{
E^1_{0,q} \ar[r]^(.4){\nu_q} \ar@{->>}[d]& \Tor q R M k \\
E^2_{0,q} \ar@{->>}[d] & \\
\vdots  \ar@{->>}[d] & \\
E^{r}_{0, q} \ar[uuur]_{\alpha} &
}\]
where $\alpha$ is injective and $r = \lfloor  (q-1)/2 \rfloor$. (The
formula for $r$ comes from the fact that $E^0_{p,q} = 0$ for $q < p$.) Thus $\nu_q$ is an injection if and only
if each vertical map is an isomorphism if and only if $d^i$ is zero on
$E^i_{i, q - i +1}$ for all $2 \leq i \leq r$. From the description of the
differentials $d^i$, and the fact that $\o m_1 = 0 = \o m_1^G$, we
have the following.
\begin{cor}\label{change_of_rings_vanishing} Let $A$ be a minimal
  $Q$-free resolution of $R$ with an \Ai algebra structure
  and $G$ a minimal $Q$-free resolution of $M$ with an
  \Aia-module structure $m^G$. Let 
$$\nu_q: \Tor q Q M k \to
\Tor q R M k$$ be the change of rings map. 
  \begin{enumerate}
  \item If the maps
\[(\o m^G_n)_{q+1} = (m_n^G \otimes k)_{q+1}: \left ( \p
    A^{\otimes n - 1}[1] \otimes G \right )_{q+1} \otimes k \to G_q \otimes k\]
 are zero for all
$n$, then $\nu_q$ is injective.
\item If $\nu_q$ is injective, then $(\o m_2^G)_{q+1} = 0$.
  \end{enumerate}
\end{cor}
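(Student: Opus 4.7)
The plan is to invoke the factorization of $\nu_q$ through the Avramov spectral sequence recorded in Corollary \ref{desc_of_edge_maps}: the diagram displayed just before the statement shows that $\nu_q$ is injective if and only if each edge differential $d^r\colon E^r_{r,\,q-r+1}\to E^r_{0,q}$ vanishes for every $r\ge 1$. So the task reduces to reading off these edge differentials from Theorem \ref{thm_diffs_em_ss} and seeing exactly which \Ai-operations enter.

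Specialize the formula of Theorem \ref{thm_diffs_em_ss} to $p=r$ and $S=N=k$. The inner sum over $j$ collapses to the single value $j=0$, so
\[ d^r(x)\;=\;\sum_{i=1}^{r+1}\t m_i(x_{i-1}).\]
Each lift $x_{i-1}$ lies in $E^0_{i-1}$ and carries exactly $i-1$ bar factors together with a single $G$-factor. Since the output of $d^r$ must land in the bar-free strand $E^0_0=G\otimes k$, the operation $\t m_i$ is forced to consume every factor of $x_{i-1}$, which means $\t m_i=\o m_i^G$ (this matches the case $j=p-i+1$ in the theorem, taking $p$ to be $i-1$ and $j=0$). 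A routine shifted-degree count shows each $x_{i-1}$ carries total shifted degree $q+1$, so only the component $(\o m_i^G)_{q+1}$ of each higher module homotopy shows up, giving
\[ d^r(x)\;=\;\sum_{i=1}^{r+1}(\o m_i^G)_{q+1}(x_{i-1}).\]

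With this formula in hand both parts are immediate. For (1), if $(\o m_n^G)_{q+1}=0$ for every $n$ then every summand vanishes for every $r\ge 1$, so every edge differential is zero and $\nu_q$ is injective. For (2), injectivity of $\nu_q$ forces in particular the surjection $E^1_{0,q}\twoheadrightarrow E^2_{0,q}$ to be an isomorphism, so $d^1\colon E^1_{1,q}\to E^1_{0,q}$ must vanish; minimality of $A$ and $G$ gives $\o m_1=\o m_1^G=0$, hence $E^1_{1,q}=E^0_{1,q}=(\p A[1]\otimes G)_{q+1}\otimes k$, and the displayed formula with $r=1$ identifies $d^1$ with $(\o m_2^G)_{q+1}$, forcing it to be zero.

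The only mild obstacle is to verify that the $A$-algebra operations $\o m_i$ (acting on bar factors alone) never contribute to these particular edge differentials; this is settled by the ``all factors must be consumed'' observation above, and the shifted-degree bookkeeping needed to pin down the subscript $q+1$ is a one-line check.
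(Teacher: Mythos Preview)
Your argument is correct and is exactly the approach the paper intends: the paper's own proof is the single sentence ``From the description of the differentials $d^i$, and the fact that $\o m_1 = 0 = \o m_1^G$, we have the following,'' and you have simply written out the computation it points to. Your key observations---that specializing Theorem~\ref{thm_diffs_em_ss} to $p=r$ collapses the $j$-sum and forces the surviving operation to be $\o m_i^G$ because the target $E^0_0$ has no bar factors, and that minimality gives $E^0=E^1$ so that $d^1$ is literally $(\o m_2^G)_{q+1}$---are precisely the content of that sentence.
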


\begin{cor}\label{inj_change_of_rings_for_M_and_syz}
The change of rings maps
\[\nu: \Tor * Q M k \to
\Tor * R M k \quad \nu': \Tor * Q {\syz R M} k \to \Tor * R {\syz R M}k \] are injective if
and only if $\o m_n = 0 = \o m_n^G$ for all $n \geq 1$.
\end{cor}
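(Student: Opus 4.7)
The plan is to reduce both implications to Corollary \ref{change_of_rings_vanishing} applied in two settings: once to $M$ using the given \Ai-structures on $A$ and $G$, and once to $\syz R M$ using the canonical \Ai-module structure on $\csyz R G$ provided by Proposition \ref{ai_structure_on_cone}. Throughout, $\o m_1 = 0 = \o m_1^G$ holds automatically from the minimality of $A$ and $G$.

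For the ``if'' direction, assume $\o m_n = 0 = \o m_n^G$ for all $n$. Applying Corollary \ref{change_of_rings_vanishing}(1) in each degree immediately yields $\nu$ injective. Since $\o m_2^G = 0$, the differential of $\csyz R G$, which by Lemma \ref{csyz_lem} is built from $m_1, m_1^G$, and $m_2^G$, is minimal; hence $\csyz R G$ is a minimal $Q$-free resolution of $\syz R M$. The explicit formula in Proposition \ref{ai_structure_on_cone} shows each entry of the block matrix $\o{\msyz n}$ is one of $\pm \o m_n$, $\pm \o m_{n+1}^G$, $\pm \o m_n^G$, all of which vanish by assumption. Corollary \ref{change_of_rings_vanishing}(1) applied to $\csyz R G$ then gives $\nu'$ injective.

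For the ``only if'' direction, assume $\nu$ and $\nu'$ are injective. Corollary \ref{change_of_rings_vanishing}(2) applied to $\nu$ in each degree yields $\o m_2^G = 0$, so $\csyz R G$ is again a minimal $Q$-free resolution of $\syz R M$, carrying the canonical \Ai-module structure maps $\msyz n$. I would then induct on $n \geq 1$ to prove the joint statement that $\o m_i = 0$ for $2 \leq i \leq n$ and $\o m_i^G = 0$ for $2 \leq i \leq n + 1$. The base case $n = 1$ is what we just established. For the inductive step, the inductive hypothesis together with the block formula in Proposition \ref{ai_structure_on_cone} forces $\o{\msyz i} = 0$ for $2 \leq i \leq n$. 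In the Avramov spectral sequence for $\syz R M$ (built from the resolution $\csyz R G$), Theorem \ref{thm_diffs_em_ss} then shows that the differentials $d^1, \ldots, d^{n-1}$ vanish on the relevant $E^r_{r,\ast}$ strata, and the cocycle conditions defining $E^n_{n,\ast}$ are satisfied by taking the lower-index representatives $x_0, \ldots, x_{n-1}$ to be zero. Hence $d^n$ on $E^n_{n,\ast}$ collapses to its single leading term $t_{n+1}$, which by the formula in Theorem \ref{thm_diffs_em_ss} is precisely $\o{\msyz{n+1}}$. Since $\nu'$ is injective, the edge-map analysis from the proof of Corollary \ref{change_of_rings_vanishing} forces $d^n$ to vanish on this stratum, so $\o{\msyz{n+1}} = 0$. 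Reading off the block structure yields $\o m_{n+1} = 0$ and $\o m_{n+2}^G = 0$, completing the inductive step.

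The main obstacle is the inductive step in the ``only if'' direction. Extracting $\o{\msyz{n+1}} = 0$ for $n \geq 2$ requires more than part (2) of Corollary \ref{change_of_rings_vanishing}, which only records the degree-two vanishing; one must return to the edge-map diagram in the proof of that corollary and exploit the fact that $\nu'$ injective forces every surjection $E^r_{0,\ast} \twoheadrightarrow E^{r+1}_{0,\ast}$ to be an isomorphism, not just the first one. Combining this with the leading-term calculation for $d^n$ permitted by the inductive hypothesis (via Theorem \ref{thm_diffs_em_ss}) isolates $\o{\msyz{n+1}}$ as the sole surviving contribution and forces the cascade of vanishings.
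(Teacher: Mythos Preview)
Your approach is essentially the paper's: both directions rest on Corollary \ref{change_of_rings_vanishing} for $M$ together with the block formula for $\msyz n$ from Proposition \ref{ai_structure_on_cone}, and the ``only if'' direction is precisely the induction the paper abbreviates as ``$E^1=E^2$, and we can repeat the argument \ldots etc.''.

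There is one small bookkeeping gap in your inductive step. From $\o{\msyz{n+1}}=0$ the block formula yields $\o m_{n+1}=0$, but only the restriction $\o m_{n+2}^G|_{\p A[1]^{\otimes n+1}\otimes G_0}=0$: the bottom-left entry of $\msyz{n+1}$ is $m_{n+2}^G$ restricted to inputs ending in $G_0$ (this is the map $\varphi_{n+1}$ in the proof of Proposition \ref{ai_structure_on_cone}), not the full $m_{n+2}^G$. The missing piece $\o m_{n+2}^G|_{\ldots\otimes\p G}$ is the bottom-right entry of $\msyz{n+2}$, which only appears at the next iteration. The easiest fix is to weaken the hypothesis to ``$\o m_i=0$ for $i\le n$, $\o m_i^G=0$ for $i\le n$, and $\o m_{n+1}^G|_{\ldots\otimes G_0}=0$''. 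Then at step $n$ the vanishing of $\o\msyz{n+1}$ gives $\o m_{n+1}=0$, the $\p G$-restriction $\o m_{n+1}^G|_{\ldots\otimes\p G}=0$ (bottom-right block), and the new partial datum $\o m_{n+2}^G|_{\ldots\otimes G_0}=0$ (bottom-left block); combining the two restrictions of $m_{n+1}^G$ yields the full $\o m_{n+1}^G=0$. Alternatively, after extracting $\o m_{n+1}=0$ from the syzygy spectral sequence, run the same edge-map argument in the spectral sequence for $M$, where $d^{n+1}$ is now exactly the full $\o m_{n+2}^G$. Either way the induction closes, and this is what the paper's sketch is doing.
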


The proof is a pleasant combination of Section \ref{syz_sec} and
Corollary \ref{change_of_rings_vanishing}.

\begin{proof}
If $\o m_n$ and $\o m_n^G$ are zero, the differentials
of the spectral sequence are zero, and so the edge morphism $\Tor * Q M k \to
\Tor * R M k$ is injective. Since $m_2^G$ is minimal, the syzygy complex $\csyz R G$
defined in \ref{defn_csyz} is minimal,
and $\o {\msyz n}$ is zero for all $n$, by definition. Thus there are no nonzero
differentials in the spectral sequence for
$\syz R M$, and so $\Tor * Q {\syz R M} k \to \Tor * R {\syz R
  M}k$ is also injective.

If $\nu$ is injective, then $\o m_2^G = 0$ by
\ref{change_of_rings_vanishing}. Thus $\csyz R G$ is a
minimal resolution of $\syz R M$ and we can apply
\ref{change_of_rings_vanishing}. This shows that if $\nu'$ is injective, then
$\o {\msyz 2} = 0$.  It follows from Definition
\ref{eq:csyz_maps} that $\o m_2 = 0$. Thus $E^1 = E^2$,
and we can repeat the argument to show that $\o m_3, \o m_3^G = 0$, in
which case $E^2 = E^3$, etc.
\end{proof}

Avramov defined 
\[o_q(M) = \ker \left ( \frac{\Tor {q} Q M k}{(\Tor + Q R k \cdot \Tor {} Q M k)_q} \to \Tor {q} R M
k \right )\] and showed that if $A$ is a dg-algebra, i.e.\ $m_n = 0$ for all
$n \geq 3$, and if
\[o(M) := \bigoplus_{q \geq 1}
o_q(M)\neq 0,\] then the minimal free $Q$-free resolution
$G$ of $M$ has no structure of a dg
$A$-module \cite[1.2]{MR601460}. We generalize this, slightly, below.

\begin{cor}
Let $(Q, \fn, k)$ be a local ring, $R$ a cyclic $Q$-algebra, and $M$ a finitely
generated $R$-module. Let $A$ and $G $ be minimal
$Q$-free resolutions of $R$ and $M$, with \Ai algebra and module
structures $m$ and $m^G$, respectively. Set
$\o m_n = m_n \otimes k$ and $\o m_n^G = m_n^G \otimes k$, and assume that $\o m_n = 0$ for $n \geq 3$, e.g.\ $A$
is a dg-algebra. 

The obstruction $o_q(M)$ vanishes if and only if the differential
$d^r: E^r_{r,q-r+1} \to E^r_{0,q}$, which is induced by
$(\o m_{r+1}^G)_{q+1}$, is zero for all $r \geq 2$. In
particular, if $\o m_r^G = 0$ for all $r \geq 3$, then $o(M) = 0.$ 
\end{cor}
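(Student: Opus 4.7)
The plan is to combine Corollary \ref{desc_of_edge_maps} with standard properties of convergent first-quadrant spectral sequences and the explicit description of the higher differentials in Theorem \ref{thm_diffs_em_ss}. First I would apply Corollary \ref{desc_of_edge_maps} (with $S = N = k$) to identify $o_q(M)$ with the kernel of the edge map $E^2_{0,q} \to \Tor q R M k$. Because the Avramov spectral sequence is first quadrant and converges to $\Tor * R M k$, this edge map factors as a chain of surjections
\[ E^2_{0,q} \onto E^3_{0,q} \onto \cdots \onto E^\infty_{0,q} \into \Tor q R M k, \]
in which the kernel of each surjection $E^r_{0,q} \onto E^{r+1}_{0,q}$ is the image of $d^r: E^r_{r,q-r+1} \to E^r_{0,q}$ (the outgoing $d^r$ from the bottom row vanishes by the first-quadrant condition). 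Consequently $o_q(M) = 0$ if and only if $d^r$ is zero on $E^r_{r,q-r+1}$ for every $r \geq 2$.

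Next I would evaluate the formula from Theorem \ref{thm_diffs_em_ss} at $p = r$. Since $p - r = 0$, only the boundary index $j = 0$ is admissible in each term $t_i$, which forces $\t m_i$ to equal its module value $m_i^G \otimes k = \o m_i^G$. Thus $d^r$ at a surviving cycle with chosen compatible lifts $x_0, \ldots, x_r$ is represented by $\sum_{i=1}^{r+1} \o m_i^G(x_{i-1})$. Minimality of $A$ and $G$ gives $\o m_1 = 0 = \o m_1^G$, and the hypothesis $\o m_n = 0$ for $n \geq 3$ removes every algebra higher homotopy from this expression, leaving only module higher homotopies. A short induction on $r$ then shows that for each $i$ with $2 \leq i \leq r$ the term $\o m_i^G(x_{i-1})$ is identified with a boundary image from an earlier page, hence vanishes in $E^r_{0,q}$; the only genuinely new contribution on the $r$th page is $\o m_{r+1}^G(x_r)$, which is the precise sense in which ``$d^r$ is induced by $(\o m_{r+1}^G)_{q+1}$''.

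Given this description, the in-particular statement is immediate: if $\o m_r^G = 0$ for every $r \geq 3$, then $\o m_{r+1}^G$ vanishes for every $r \geq 2$, so $d^r = 0$ on $E^r_{r,q-r+1}$ for all $r \geq 2$ and all $q$, whence $o(M) = \bigoplus_q o_q(M) = 0$. I expect the main technical obstacle to be the inductive step identifying $\o m_i^G(x_{i-1})$ for $i \leq r$ as a boundary coming from an earlier page; this requires reading off from the cocycle conditions in Theorem \ref{thm_diffs_em_ss} that the class of $x_{i-1}$ already represents an element of $E^{i-1}_{i-1,*}$ whose $d^{i-1}$-image equals that term modulo lower filtration. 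This is pure bookkeeping once the formula in Theorem \ref{thm_diffs_em_ss} has been unpacked, but keeping the signs and lift choices straight will require some care.
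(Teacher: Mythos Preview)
Your proposal is correct and follows essentially the same route as the paper's own (very terse) argument: identify $o_q(M)$ with the kernel of the edge map $\beta_q\colon E^2_{0,q}\to \Tor q R M k$, factor $\beta_q$ through the tower of surjections $E^2_{0,q}\twoheadrightarrow E^3_{0,q}\twoheadrightarrow\cdots$, and then read off the differentials from Theorem~\ref{thm_diffs_em_ss}. One small point worth tightening: Corollary~\ref{desc_of_edge_maps} only gives a surjection from the quotient onto $E^2_{0,q}$, so to get the identification $E^2_{0,q}=\Tor q Q M k/(\Tor + Q R k\cdot\Tor * Q M k)_q$ you should note that with $S=N=k$ and $A,G$ minimal one has $E^0=E^1$ and $d^1\colon E^1_{1,*}\to E^1_{0,*}$ equals $\overline{m}_2^G$, so the image of $d^1$ is exactly the image of $\pitchfork$.
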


The proof follows from considering the edge map
\[\beta_q: E^2_{0,q} = \frac{\Tor {q} Q M k}{(\Tor + Q R k \cdot \Tor {} Q M k)_q} \to \Tor {q} R M
k,\]
the diagram
\[\xymatrix@R=5mm{
E^2_{0,q} \ar[r]^(.4){\beta_q} \ar@{->>}[d]& \Tor q R M k \\
E^3_{0,q} \ar@{->>}[d] & \\
\vdots  \ar@{->>}[d] & \\
E^{r}_{0, q} \ar[uuur]_{\alpha} &
}\]
and the description of the differentials given in
\ref{thm_diffs_em_ss}.

This recovers Avramov's result, since if $o(M) \neq 0$, then for any
\Ai-structure $m^G$, there exists $n \geq 3$ with $\t m_n^G \neq 0$. In particular
$m_n^G \neq 0$, so by Corollary \ref{obs_cor}, $G$ does not have a dg
$A$-module structure.

\section{Golod maps}
In this section $(Q, \fn, k)$ is a local Noetherian ring, $R$ is a
cyclic $Q$-algebra and $\varphi: Q \to R$ is the projection map.

and $M$ is a
finitely generated $R$-module. We let $\varphi: Q \to R$ be projection.

\begin{defn}
An \Ai-algebra structure $m$, or \Aia-module structure $m^G$, is
minimal if
$m_n$, or $m_n^G$, is minimal for all $n \geq 2$. 
\end{defn}

We now further assume that $A \xra{\simeq} R$ and $G \xra{\simeq} M$
are minimal $Q$-free resolutions. In this section we determine when there exist minimal
\Ai-structures $A$ and $G$, or equivalently, when the bar resolution is minimal. 
We need the following notation on minimal free
resolutions.

\newcommand{\betti}[3]{\beta^{#1}_{#2}(#3)}
% The \emph{$n$th Betti number of $M$} is
%   \[ \betti R M n = \dim_k \Ext n R M k =.\] 
\begin{defn}
  The \emph{Poincare series of $M$} is the generating function
  \[ \ponser R M := \sum_{n \geq 0} \dim_k \Tor n R M k t^n.\]
This applies to any local ring, so e.g.\ $\ponser Q M$
also makes sense.
\end{defn}
% This equals the rank of the $n$th module in the minimal free resolution of
% $M$. 
% The Poincare series of a complex of finitely
% generated free modules is the generating function of the ranks in each degree.

\begin{lemma}\label{golod_bound_lemma}
There is a degree-wise inequality of power series
  \begin{equation}
    \ponser R M \slte \frac{\ponser Q M}{1 - t \left ( \ponser Q R - 1
      \right )}.\label{golod_bound}
  \end{equation}
This is an equality if, respectively only if, the resolution \ref{proj_resln_thm}
is minimal for some, respectively every, choice of \Ai-structures on $A, G$.
\end{lemma}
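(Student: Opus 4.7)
The plan is to compare the Poincar\'e series $\ponser R M$ with the rank-generating function of the $R$-free resolution $R \otimes \Bar A \otimes G$ supplied by Theorem \ref{proj_resln_thm}. The key observation is that while the differentials of this bar resolution depend on the choice of \Ai-structures, its underlying graded $R$-module does not.

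First I would compute this rank-generating function. Decomposing $\Bar A = \bigoplus_{p \geq 0} \p A[1]^{\otimes p}$ and using minimality of $A$ and $G$ (so $\rank_Q A_n = \beta_n^Q(R)$ and $\rank_Q G_n = \beta_n^Q(M)$), the Hilbert series of $\p A[1]$ equals $t(\ponser Q R - 1)$: the shift $[1]$ accounts for the factor $t$, and the $-1$ removes $A_0 = Q$. A K\"unneth-type calculation then gives the rank-generating function of $R \otimes \Bar A \otimes G$ as
\[ \ponser Q M \cdot \sum_{p \geq 0} \bigl( t(\ponser Q R - 1)\bigr)^p = \frac{\ponser Q M}{1 - t(\ponser Q R - 1)}. \]

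Since the bar resolution is an $R$-free resolution of $M$, the betti numbers $\dim_k \Tor n R M k$ are bounded above by these ranks in each degree, yielding the inequality \eqref{golod_bound}. If the bar resolution is minimal for \emph{some} choice of \Ai-structures, it computes these betti numbers exactly and equality holds. Conversely, I would invoke the standard fact that every finitely generated free resolution over the local ring $R$ decomposes as the minimal free resolution direct-summed with contractible trivial complexes of the form $0 \to R \xra{\cong} R \to 0$, each of which strictly increases the ranks in two consecutive degrees. Consequently, if \eqref{golod_bound} is an equality, then any $R$-free resolution of $M$ whose ranks match the right-hand side must itself be minimal. Because those ranks are independent of the choice of \Ai-structures, this forces the bar resolution to be minimal for \emph{every} choice.

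The main technical ingredient is the rank computation, which is routine once the shifts and truncations in the bar construction are tracked carefully. The only delicate point is the ``some versus every'' distinction between the two implications, and this is resolved cleanly by the choice-independence of the rank-generating function.
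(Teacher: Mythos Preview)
Your proposal is correct and follows essentially the same approach as the paper: compute the rank-generating function of the bar resolution to be the right-hand side of \eqref{golod_bound}, use that the minimal resolution is a direct summand (equivalently, a subcomplex) of any free resolution to get the inequality, and observe that the ranks of the bar resolution are independent of the \Ai-structures to handle the ``some versus every'' dichotomy. Your version is slightly more explicit about the rank computation and the direct-sum decomposition, but the argument is the same.
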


We refer to the above as the Golod bound for $M$.

\begin{proof}
The free
  resolution of \ref{proj_resln_thm} has Poincare series the rational
  function on the right. Since the minimal $R$-free resolution of $M$ is a
  subcomplex of this resolution, the inequality follows. If we can
  choose minimal \Ai-structures, then the resolution \ref{proj_resln_thm}
is clearly minimal, and so the inequality must be an equality. If the
inequality is an equality, then for any choice of \Ai-structures the
corresponding bar resolution has the same Poincare series as the minimal
resolution and so must be minimal.
\end{proof}

\begin{rems}
 The inequality \eqref{golod_bound} classical, first appearing in
 print in \cite{MR0138667} in case $M = k$ (and is credited to Serre
 there). The Eagon resolution of the
    residue field, see \cite[4.1]{MR0262227}, realizes the bound for
    this module,
    while Gokhale \cite{MR1261018} shows the existence of a resolution
    realizing the bound for any module, but he does not give an
    explicit description of the differentials. To our knowledge,
    the resolution of Theorem \ref{proj_resln_thm} is the first
    explicit realization of the Golod bound for every finitely
    generated module.
\end{rems}

\begin{defn}
 A finitely generated $R$-module $M$ is \emph{$\varphi$-Golod} if
 \eqref{golod_bound} is an equality. If $k$ is a $\varphi$-Golod
 module, then $\varphi$ is a \emph{Golod homomorphism}.
\end{defn}

\begin{rems}
Levin first defined Golod
modules \cite{LevinNotes} and Golod homomorphisms
\cite{MR0429868}. The definitions were inspired by Golod
\cite{MR0138667}. For more information, and history on Golod
    homomorphisms see \cite{MR846439} and \cite[\S 5]{MR1648664}.
\end{rems}

\begin{ex}
Let $R = Q/(f)$ with $f \in \fn^2$ a non-zero divisor. Let $K$ be
the Koszul complex on a minimal set of generators of $\fn$ over $Q$. The
construction of \cite[2.2]{MR877011} gives a homotopy $s: K \to K$
for multiplication by $f$ on $K$ such that $s$ is minimal and $(s)^2 =
0$. Following \ref{hyp_ex}, $m_2^K = s$ and $m_n^K = 0$ for $n \geq 3$
is an \Aia-module structure on $K$, and
it is clear the corresponding bar resolution of $k$ is minimal. Thus $Q \to R$ is
Golod.
\end{ex}

\begin{ex}
If is often the case that quotients of powers of ideals are
Golod. Levin shows that for a local ring $(Q, \fn)$, the map $Q \to
Q/\fn^r$ is Golod for all $r \gg 0$,
\cite[3.15]{MR0429868}, while Herzog, Welker and Yassemi show that if $(Q,
\fn)$ is regular, or a standard graded polynomial ring, then for any ideal $I$,
graded if $Q$ is the polynomial ring, the map $Q \to
Q/I^r$ is Golod for all $r \gg 0$ \cite[4.1]{1108.5862}. When $Q$ is a
standard graded polynomial ring over a field of characteristic zero,
Herzog and Huneke show that $Q \to Q/I^r$ is Golod for all graded
ideals $I$ and $r \geq 2$ \cite[Theorem 2.3]{MR3091800}.
\end{ex}

\begin{thm}\label{char_of_golod_mods}
Let $(Q, \fn, k)$ be a local Noetherian ring, $\varphi: Q \to R$ a
surjective ring map, and $M$ a finitely
generated $R$-module. Let $A \xra{\simeq} R$ and $G \xra{\simeq} M$ be
minimal $Q$-free resolutions.  The following are equivalent:
\begin{enumerate}
\item $M$ is $\varphi$-Golod;
\item the bar resolution \ref{proj_resln_thm} is minimal for some
  (respectively, every)
\Ai-structure on $A$ and $G$;
\item there exist minimal \Ai-structures on $A$ and
  $G$ (respectively, every such structure is minimal);
\item there exist $Q$-free resolutions $A'$ and $G'$ of $R$ and $Q$,
  respectively, that are not necessarily minimal, but have minimal \Ai-structures;
\item the Avramov spectral sequence \ref{em_ss}, with $S = N = k$,
  collapses on the
  first page;
\item the change of rings maps 
\[\Tor {} Q M k \to \Tor {} R M k \quad \Tor {} Q {\syz R M} k \to
\Tor {} R {\syz R M} k\]
are injective.
\end{enumerate}
\end{thm}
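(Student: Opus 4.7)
The plan is to establish the equivalences by the cycle (1)$\Leftrightarrow$(2)$\Leftrightarrow$(3)$\Leftrightarrow$(6), together with (3)$\Rightarrow$(5)$\Rightarrow$(1) and (3)$\Rightarrow$(4)$\Rightarrow$(1); the ``some versus every'' parentheticals will then fall out from a Poincar\'e series comparison.

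First, (1)$\Leftrightarrow$(2) is simply Lemma \ref{golod_bound_lemma}. The equivalence (2)$\Leftrightarrow$(3) follows by unpacking the bar differential: its components (after tensoring with $R$) are tensor products of identity maps with one $m_i$ or $m_i^G$. Since $A$ and $G$ are already minimal, $m_1$ and $m_1^G$ contribute minimally to $d^{\Bar{}} \otimes_R k$, so the bar resolution is minimal iff $m_n, m_n^G$ are minimal for all $n \geq 2$. Next, (3)$\Leftrightarrow$(6) is precisely Corollary \ref{inj_change_of_rings_for_M_and_syz}.

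For (3)$\Rightarrow$(5), I invoke Theorem \ref{thm_diffs_em_ss}: with $S = N = k$, each $d^r$ is expressible in terms of $\o m_i, \o m_i^G$ for $i \leq r+1$, all zero under (3). For (5)$\Rightarrow$(1), since $A, G$ are minimal the $E^1$-page simplifies to $\o{A_+}^{\otimes p} \otimes \o G$, whose total Poincar\'e series is the right-hand side of \eqref{golod_bound}; collapse at $E^1$ forces $P^R_M(t)$ to equal this series. The implication (3)$\Rightarrow$(4) is trivial.

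The main obstacle is (4)$\Rightarrow$(1). Given $A', G'$ with minimal \Ai-structures (but themselves possibly non-minimal resolutions), I form the bar resolution $R \otimes \Bar{A'} \otimes G' \to M$ of Theorem \ref{proj_resln_thm} and compute $\Tor * R M k$ by tensoring with $k$ over $R$. Because the \Ai-structures are minimal, the only components of the differential that survive mod $\fn$ are those coming from $m_1 = d^{A'}|_{A'_+[1]}$ and $m_1^{G'} = d^{G'}$, so the reduced complex is $\Bar{(A' \otimes k)} \otimes (G' \otimes k)$. Since $A' \otimes k \simeq \Tor * Q R k$ and $G' \otimes k \simeq \Tor * Q M k$ as complexes of $k$-vector spaces (the homology complexes having zero differential), a K\"unneth/transfer argument identifies the homology with $\Bar{(\Tor * Q R k)} \otimes \Tor * Q M k$, whose Poincar\'e series is the Golod bound. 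The delicate point here is verifying that the tensor-coalgebra construction $\Bar{(-)}$ preserves quasi-isomorphisms of $k$-complexes, which follows from flatness. Since this homology equals $\Tor * R M k$, we obtain (1).

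Finally, if (1) holds then, for any choice of \Ai-structures on $A$ and $G$, the bar resolution is an $R$-projective resolution of $M$ whose rank-generating function equals the Golod bound, hence coincides with that of the minimal free resolution; the bar resolution is therefore itself minimal, forcing every $m_n, m_n^G$ with $n \geq 2$ to be minimal. This yields the ``every'' direction in (2) and (3), since ``every $\Rightarrow$ some'' is immediate.
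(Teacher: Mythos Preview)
Your proof is correct and tracks the paper's argument closely; the only organizational difference is that you prove $(4)\Rightarrow(1)$ by a direct K\"unneth computation on $(R\otimes\Bar{A'}\otimes G')\otimes_R k$, whereas the paper routes $(4)\Rightarrow(5)$ via Theorem~\ref{thm_diffs_em_ss} and then $(5)\Rightarrow(1)$ by the same rank count you use---your computation is exactly that spectral-sequence collapse unpacked. One detail worth making explicit in your $(4)\Rightarrow(1)$ step: the bar construction uses $A'_+$, not $A'$, so you need $H_*(A'_+\otimes k)\cong\Tor{+}{Q}{R}{k}$, which follows once you note $d_1^{A'}\otimes k=0$ (because $\img d_1^{A'}=I\subseteq\fn$); without this, the quasi-isomorphism $A'\otimes k\simeq\Tor{*}{Q}{R}{k}$ does not immediately transfer to the truncation.
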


\begin{proof}
First note both conditions in 2 are equivalent by Lemma
\ref{golod_bound_lemma}. This also shows 1 and 2 are equivalent. Let
$m, m^G$ be \Ai-structures and assume 2 is true, so the bar resolution tensored with $k$ is zero. Using the
fact that a morphism $\begin{pmatrix}
 f_1 & f_2 \\
f_3 & f_4
\end{pmatrix}$ is zero if and only if $f_i$ is zero for all $i$, we
see that $m_n \otimes k = 0$ and $m_n^G \otimes k = 0$ for all $n
\geq 1$. Thus the \Ai-structures are minimal, and so $2$ implies the
stronger assertion of $3$. The weaker assertion of 3 tautologically
implies 4, and 4 implies 5 by the
description of the differentials of the spectral sequence in Theorem
\ref{thm_diffs_em_ss}. 
The ranks on the first page of the Avramov spectral sequence are the
same as the bound in \ref{golod_bound_lemma}, thus the sequence
collapses on the first page if and only if the bound is an equality,
i.e.\ $5$ and $1$ are equivalent. Finally,
$3$ and $6$ are equivalent by \ref{inj_change_of_rings_for_M_and_syz}.
\end{proof}

The equivalence of $1$ and $6$ was first proved by Levin in \cite{MR814185}. The
next result is also proved there. We give a short proof using
the syzygy of a resolution defined in \ref{defn_csyz}.

\begin{cor}
If $M$ is $\varphi$-Golod, then $\syz R M$ is also $\varphi$-Golod.
\end{cor}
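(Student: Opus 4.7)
The plan is to use the equivalence of conditions (1) and (3) in Theorem \ref{char_of_golod_mods} together with the canonical \Aia-module structure on the syzygy complex $\csyz R G$ constructed in Proposition \ref{ai_structure_on_cone}. The strategy is to transfer minimality of \Ai-structures on $A, G$ to minimality of the induced \Ai-structure on the syzygy complex, and to verify that this syzygy complex is itself a minimal $Q$-free resolution.

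First, since $M$ is $\varphi$-Golod, Theorem \ref{char_of_golod_mods} lets me choose a minimal \Ai-algebra structure $m = (m_n)$ on $A$ and a minimal \Aia-module structure $m^G = (m_n^G)$ on $G$, meaning $m_n$ and $m_n^G$ are minimal for all $n \geq 2$ (the maps $m_1$ and $m_1^G$ are the differentials of the minimal resolutions $A$ and $G$, so they are minimal too). Next, I consider the first syzygy complex $\csyz R G$ together with the induced \Aia-module structure $(\msyz n)$ of Proposition \ref{ai_structure_on_cone}, recalling the block-matrix formula
\[ \msyz n = \begin{pmatrix} -s^{-1}(m_n \otimes 1)(1 \otimes s) & 0 \\ s^{-1} m_{n+1}^G (1 \otimes s) & -s^{-1}m^G_n(1 \otimes s) \end{pmatrix}. \]

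Inspecting this formula, the differential $\msyz{1}$ of $\csyz R G$ has entries built from $m_1$, $m_1^G$, and $m_2^G$, all of which are minimal. Hence $\csyz R G$ is a minimal $Q$-free resolution of $\syz R M$, and so it coincides (up to isomorphism) with the minimal $Q$-free resolution. For $n \geq 2$, each entry of $\msyz n$ is built from $m_n$, $m_n^G$, and $m_{n+1}^G$, all minimal by hypothesis; since a block matrix is minimal iff each of its blocks is, the structure maps $\msyz n$ are minimal for all $n \geq 2$. Thus $\csyz R G$ carries a minimal \Aia-module structure over the minimal \Ai-algebra $A$.

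Applying the implication (3)$\Rightarrow$(1) of Theorem \ref{char_of_golod_mods} to $\syz R M$, with resolution $\csyz R G$ and the structure $(\msyz n)$, I conclude that $\syz R M$ is $\varphi$-Golod. There is no real obstacle here; the work was all absorbed in Proposition \ref{ai_structure_on_cone}, which packages the \Ai-data of the syzygy complex so cleanly that minimality of $(m, m^G)$ transfers directly.
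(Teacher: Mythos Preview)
Your proof is correct and follows essentially the same approach as the paper: use Theorem \ref{char_of_golod_mods} to obtain minimal \Ai-structures on $A$ and $G$, transfer these via Proposition \ref{ai_structure_on_cone} to minimal \Ai-structures on $\csyz R G$, and conclude by Theorem \ref{char_of_golod_mods} again. Your version is more explicit in verifying that $\csyz R G$ is itself minimal (so that condition (3) applies); the paper's terser argument could equally well be read as invoking condition (4), which does not require minimality of the resolution.
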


\begin{proof}
Let $m, m^G$ be \Ai-structures. These are minimal by Theorem
\ref{char_of_golod_mods}. By \ref{ai_structure_on_cone}, $\syz R M$
has a $Q$-free resolution with minimal \Ai-structures, thus is $\varphi$-Golod.
\end{proof}

The following answers a question I learned from Eisenbud. There was no
previously bound of this type known, even in case $Q$ is regular.
\begin{cor}\label{new_bound_for_golod_mods}
Assume that $c = \max \{ \pd_Q R, \pd_Q M - 1\}$
is finite. 
If \eqref{golod_bound} is an equality up to the coefficient of
$t^{c+1}$, then $M$ is $\varphi$-Golod.
\end{cor}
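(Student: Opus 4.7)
The plan is to invoke the implication $(3) \Rightarrow (1)$ of Theorem \ref{char_of_golod_mods}: it suffices to produce minimal \Ai-structures on $A$ and $G$. Fix any such $m$ on $A$ and $m^G$ on $G$ via Proposition \ref{ai_exist}, and let $B = R \otimes \Bar A \otimes G$ be the associated bar resolution, which is $R$-projective by Theorem \ref{proj_resln_thm}. Because $B$ is a projective resolution of $M$, one has $\dim_k \Tor j R M k = \dim_k H_j(B \otimes k) \leq \rank_R(B_j)$, with equality if and only if both $d^B_j \otimes k = 0$ and $d^B_{j+1} \otimes k = 0$ (rank-nullity applied to $B_j \otimes k$). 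Equality in \eqref{golod_bound} through the coefficient of $t^{c+1}$ therefore forces $d^B_j \otimes k = 0$ for \emph{every} $j = 1, \ldots, c+2$, gaining one extra degree of minimality past the naive bound of $c+1$.

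Next I locate every nonzero component of $m_n$ and of $m_n^G$ (for $n \geq 2$) as a matrix entry of $d^B_j$ for some $j \leq c+2$. The restriction of $m_n$ to $A_{j_1}[1] \otimes \cdots \otimes A_{j_n}[1]$ lands in $A_{j_1 + \cdots + j_n + n - 2}[1]$, which forces $j_1 + \cdots + j_n + n \leq \pd_Q R + 2 \leq c + 2$ whenever this component is nonzero. Tensoring with $G_0$ and taking no additional bar factors, this restricted map is exactly the matrix entry $A_{j_1}[1] \otimes \cdots \otimes A_{j_n}[1] \otimes G_0 \to A_{j_1 + \cdots + j_n + n - 2}[1] \otimes G_0$ of the bar differential in source degree $j_1 + \cdots + j_n + n \leq c+2$. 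An entirely parallel calculation using $\pd_Q M \leq c + 1$ bounds the source bar degree of each nonzero component of $m_n^G$ also by $c+2$. By the first paragraph all these entries lie in $\fn$, hence $m_n \otimes k = 0 = m_n^G \otimes k$ for every $n \geq 2$.

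The \Ai-structures are therefore minimal, and Theorem \ref{char_of_golod_mods} gives that $M$ is $\varphi$-Golod. The subtle point is the gain of one extra degree in the first paragraph: the top-weight components of $m_n$ and of $m_n^G$ naturally live in source bar degree $c+2$ by the length constraints on $A$ and $G$, so a naive reading of equality through $t^{c+1}$ (forcing bar minimality only through $c+1$) would not suffice. The crucial observation is that the Poincare equality at the $j$-th coefficient constrains both $d^B_j$ and $d^B_{j+1}$, granting the needed additional degree at no cost; the remainder is a routine degree count inside the bar differential.
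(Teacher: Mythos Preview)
Your proof is correct and follows essentially the same approach as the paper's: both argue that equality through degree $c+1$ forces the bar differentials $d^B_j$ to be minimal for $j \leq c+2$ (you via rank--nullity on $H_j(B\otimes k)$, the paper via the minimal resolution being a direct summand), and then both use the degree count that every nonzero component of $m_n$ and $m_n^G$ already appears as a matrix entry of some $d^B_j$ with $j \leq c+2$. Your explicit isolation of the ``gain one degree'' phenomenon is a nice clarification of what the paper leaves somewhat implicit.
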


\begin{proof}
If equality holds in degrees $\leq c+1$, then for any
choice of \Ai-structures the differentials $d_n$
of the bar resolution
\ref{proj_resln_thm} must be minimal for $n \leq c+2$, since the minimal free resoution is a
subcomplex of the bar resolution. As in the proof of
\ref{char_of_golod_mods}, this means that every map occurring as a
summand in degrees $\leq c+2$ is minimal. But since $A_c \otimes
G_0$ is a summand in degree $c+1$, all maps $m_n: \p A[1]^{\otimes n}
\to \p A[1]$ must have occured in degrees $\leq c + 2$ as terms $m_n
\otimes 1_{G_0}$. Also, since $G_{c+1}$ occurs in degree $c+1$, all
maps $m_n^G$ must have occurred in degrees $\leq c+2$. Thus $m_n$ and
$m_n^G$ are minimal for all $n$ and so the entire bar resolution is
minimal, and $M$ is $\varphi$-Golod.
\end{proof}

Golodness cannot be verified in fewer degrees, by the following:
\begin{ex}\label{codim_3_ex_poin_ser}
Let $Q, R$ be as in \ref{codim_3_ex}.
By \ref{bar_const_gor3}, the Poincare series of $k$ starts
\[ \ponser R k = 1 + 3t +
8t^2 + 21t^3 + 55t^4 + \ldots,\]
however
\[ \frac{\ponser Q k}{1 - t \left ( \ponser Q R - 1
      \right )} = \frac{(1+t)^3}{1 - t(5t + 5t^2 + t^3)} = 1+ 3t + 8t^2 + 21t^3 + 56t^4 + \ldots  \]
and so $k$ is not $\varphi$-Golod. Since $R$ is a codimension 3 Gorenstein ring, one could also use a
result of Wiebe, \cite[Satz 9]{MR0255531}, that shows 
\[\ponser R k = \frac{(1+t)^3}{1 - 5t^2 - 5t^3 + t^5}.\]
\end{ex}

For the
rest of the paper, we assume that
$(Q, \fn, k)$ is a regular local ring, $R \cong Q/I$ with $I \subseteq
\fn^2$, and $\varphi: Q \to R$ is projection. Any local ring that is the quotient of a regular local ring can be
written in this form; see \cite[\S 4]{MR1648664}.
\begin{defn}
A finitely generated $R$-module
is Golod if it is $\varphi$-Golod. The local ring $R$ is a Golod ring
if the residue field
$k$ is a Golod module.
\end{defn}

\begin{rems}
Whether $M$ is Golod does not depend on the presentation $R =
Q/I$. Indeed, if $R \cong Q'/I'$ with $I' \subseteq (\fn')^2$, then we
have $\ponser M Q = \ponser M
{Q'}$ for any finitely generated $R$-module $M$, and so the bound \ref{golod_bound_lemma} is an equality for $Q$
if and only if it is an equality for $Q'$. To see the equality of
Poincare series, if $K$ is the Koszul complex on a
minimal generating set of $\fn$ and $K'$ for $\fn'$, then $K \otimes
R \cong K' \otimes_{Q'} R$, since both are Koszul complexes on a
minimal generating set of the maximal ideal of $R$. So we have $\Tor * Q M
k \cong H_*( K \otimes M) \cong H_*(K' \otimes_{Q'} M) \cong \Tor *
{Q'} M k$.
\end{rems}

\begin{thm}\label{golod_ring_thm}
 Let $(Q, \fn, k)$ be a regular local ring, $I \subseteq \fn^2$ an
 ideal, and $R = Q/I$. Let $A \xra{\simeq} R$
 be the minimal free $Q$-resolution. The following are
 equivalent:
 \begin{enumerate}
 \item $R$ is Golod;
\item there exists a non-zero Golod $R$-module;
\item there exists a minimal \Ai-structure on $A$ (respectively, every
  \Ai-structure on $A$ is minimal);
\item the change of rings map for the maximal ideal $\fm$ of $R$,
\[ \Tor {} Q {\fm} k \to \Tor {} R {\fm} k,\]
is injective;
\item the inequality \eqref{golod_bound}, for the module $M = k$, is an equality up to the coefficient of
$t^{e+1}$, where $e = \dim Q$.
 \end{enumerate}
\end{thm}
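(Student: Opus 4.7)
My plan is to establish the chain $(1) \Rightarrow (2) \Rightarrow (3) \Rightarrow (1)$ together with $(1) \Leftrightarrow (4)$ and $(1) \Leftrightarrow (5)$. Four of these are quick: $(1) \Rightarrow (2)$ (take $M = k$) and $(1) \Rightarrow (5)$ (equality throughout implies equality through any finite range) are tautological, and $(2) \Rightarrow (3)$ is a direct invocation of Theorem \ref{char_of_golod_mods}, which for any nonzero Golod module supplies a minimal \Ai-structure on $A$. For $(5) \Rightarrow (1)$, I would apply Corollary \ref{new_bound_for_golod_mods} with $M = k$: since $Q$ is regular of dimension $e$, we have $\pd_Q R \leq e$ and $\pd_Q k = e$, so the threshold $c = \max\{\pd_Q R,\, \pd_Q k - 1\} \leq e$, and equality of \eqref{golod_bound} up to the coefficient of $t^{e+1}$ already forces equality throughout.

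For $(1) \Leftrightarrow (4)$, I would apply Theorem \ref{char_of_golod_mods}(6) to $M = k$, whose first $R$-syzygy is $\fm$. This characterizes Golodness of $k$ by injectivity of both change of rings maps $\Tor * Q k k \to \Tor * R k k$ and $\Tor * Q \fm k \to \Tor * R \fm k$. The first is always injective: since $I \subseteq \fn^2$, generators of $\fn$ map to minimal generators of $\fm$, and the Koszul complex on those over $Q$ embeds as a DG subalgebra of the Tate acyclic closure of $k$ over $R$, realizing the change of rings map on $\Tor$ as a split inclusion; see e.g.\ \cite[\S 6]{MR1648664}. Hence $(1)$ reduces to the remaining injectivity, which is $(4)$.

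The crux is $(3) \Rightarrow (1)$. Fix a minimal \Ai-structure $m = (m_n)$ on $A$, and let $K$ be the Koszul complex on a minimal generating set of $\fn$, a minimal $Q$-free resolution of $k$. The plan is to construct, by induction on $n$, a minimal \Aia-module structure $m^K = (m_n^K)$ on $K$: then the bar resolution $R \otimes \Bar A \otimes K$ of Theorem \ref{proj_resln_thm} is automatically minimal, so $k$ is $\varphi$-Golod by Theorem \ref{char_of_golod_mods}. Set $m_1^K = d^K$, which is minimal by minimality of $K$. Supposing minimal $m_1^K, \ldots, m_{n-1}^K$ have been chosen, the obstruction to defining $m_n^K$, following the proof of Proposition \ref{ai_exist}, is the cycle
\[ r(m^K|_{n-1}) = \sum_{i=2}^{n-1} \sum_j m_{n-i+1}^K \bigl( 1^{\otimes j} \otimes m_i \otimes 1^{\otimes n-i-j} \bigr) \]
in $C := \Hom{}{\p A[1]^{\otimes n-1} \otimes K}{K}$. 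By the inductive hypothesis each summand is a composition containing at least one minimal factor (an $m_i$ or an earlier $m_j^K$), so $r(m^K|_{n-1})$ lies in $\fn \cdot C$.

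The main obstacle is a Nakayama-type lifting: showing that this minimal cycle admits a minimal preimage under $\dhom$, so $m_n^K$ itself can be chosen minimal. The vanishing $H_{-1}(C) = 0$ from Proposition \ref{ai_exist} yields some preimage $\psi$, but minimality must be arranged by modifying $\psi$ by a degree-$0$ cycle whose reduction kills $\bar\psi \in C/\fn C$. The long exact sequence of $0 \to \fn \cdot C \to C \to C/\fn C \to 0$ shows this is possible precisely when $[\bar\psi]$ lies in the image of $H_0(C) \to H_0(C/\fn C)$, i.e.\ when the connecting map $H_0(C \otimes k) \to H_{-1}(\fn \cdot C)$ vanishes on the class in question. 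I anticipate this to be the technically delicate step; the plan is to establish it via the identification $\fn \cdot \Hom{}{X}{K} = \Hom{}{X}{\fn K}$ for $X$ finitely generated free, re-running the degree truncation of Proposition \ref{ai_exist} with target $\fn K$ to bound $H_{-1}(\fn \cdot C)$ in the relevant range and close the induction.
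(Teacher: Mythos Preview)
Your treatment of $(1)\Rightarrow(2)$, $(2)\Rightarrow(3)$, $(1)\Leftrightarrow(4)$, and $(1)\Leftrightarrow(5)$ is correct and agrees with the paper. The genuine gap is in $(3)\Rightarrow(1)$.

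The Nakayama-type lifting you flag is not just delicate---without extra input it actually fails. Notice that your inductive construction of a minimal $m^K$ uses nothing about $K$ beyond its being a minimal $Q$-free resolution; run verbatim with $K$ replaced by the minimal resolution $G$ of an arbitrary finitely generated $R$-module $M$, it would show that whenever $A$ carries a minimal \Ai-structure, so does $G$. But over a Golod ring there exist non-Golod modules, and for these no minimal $m^G$ exists by Theorem~\ref{char_of_golod_mods}. Your proposed degree-truncation fix cannot work either: for $n\ge 3$, writing $X=\p A[1]^{\otimes n-1}\otimes K$ and $C=\Hom{}{X}{K}$, both $H_{-1}(C)$ and $H_{-2}(C)$ vanish (via $C\simeq\Hom{}{X}{k}$, which has zero differential since $X$ is minimal, and $X_1=X_2=0$), so the long exact sequence for $0\to\fn C\to C\to C\otimes k\to 0$ identifies the obstruction group with $(C\otimes k)_{-1}$. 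This is typically nonzero---for instance it contains $\Hom{k}{\o A_1^{\otimes n-1}\otimes\o K_0}{\o K_{2n-3}}$ once $\dim Q\ge 2n-3$. So no purely degree-theoretic bound on $\Hom{}{X}{\fn K}$ will kill the class.

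What is missing is precisely the special feature of $k$ you already invoked for $(1)\Leftrightarrow(4)$: the change-of-rings map $\nu\colon\Tor{*}{Q}{k}{k}\to\Tor{*}{R}{k}{k}$ is injective because $I\subseteq\fn^2$. The paper exploits this through the Avramov spectral sequence. Once $\o m_n=0$ for all $n$ (assumption (3)), the bootstrap in the proof of Corollary~\ref{inj_change_of_rings_for_M_and_syz} runs with the single hypothesis ``$\nu$ injective'' (the vanishing of $\o m$ replaces the role of $\nu'$): injectivity of $\nu$ forces $\o m_2^K=0$ by Corollary~\ref{change_of_rings_vanishing}(2), hence $E^1=E^2$; then the edge differential $d^2$ is carried by $\o m_3^K$ alone, so injectivity of $\nu$ forces $\o m_3^K=0$; and so on. This gives $(3)\Rightarrow(4)$, and your own $(4)\Rightarrow(1)$ closes the loop. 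Any repair of your direct construction must feed $\nu$-injectivity back into the lifting step, at which point it is essentially the spectral-sequence argument done by hand.
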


\begin{proof}
Set $K$ to be the Koszul complex on $\fn$ with
an \Aia-module structure $m^K$.  Since $I \subseteq
\fn^2$, the change of rings map $\nu: \Tor {} Q k k \to \Tor {} R k k$ is
injective. This is well known. To see it, one can minimally resolve $k$ over
$R$ by adjoining higher degree dg-variables to the dg $R$-algebra $K \otimes R$, as
in \cite[\S 6]{MR1648664}. 

For the implications, we have $1 \Rightarrow 2$ by definition, and $2 \Rightarrow 3$ by
\ref{char_of_golod_mods}. If $3$ holds, then $m \otimes k =
0$, and arguing as in the proof of
\ref{inj_change_of_rings_for_M_and_syz},  the injectivity of $\nu$
implies that $m^K \otimes k = 0$. Thus by \ref{change_of_rings_vanishing}
applied to $M = k$, 4 holds. Since $\nu$ is always injective, we have
$4 \Rightarrow 1$ by \ref{inj_change_of_rings_for_M_and_syz} and
\ref{char_of_golod_mods}. Finally, $1 \iff 5$ holds by
\ref{new_bound_for_golod_mods}.
\end{proof}

The equivalence of 1 and 4 was first proved in \cite{MR0429868},
and that of 1 and 2 in \cite{MR814185} and \cite{MR1060830}, independently.

\begin{rems}
Golod rings are also characterized as having a homotopy Lie algebra
that is free in degrees $\geq 2$, or as having trivial Massey
products on the homology of the Koszul complex, or those for which
Eagon's resolution of the residue field is minimal; see
\cite{MR846439, MR0262227}. There seems to be interesting, but delicate, connections between these three
characterizations and \Ai-structures. We plan to return to this in
future work.
\end{rems}

The following was first proved by Lescot in
\cite{MR1060830}. We give
a short proof using the syzygy construction of a free resolution.
\begin{cor}\label{syzs_over_golod_ring_are_golod}
Let $R$ be a Golod ring, $M$ a finitely generated $R$-module, and let
$c = \pd_Q M$. Then $\syz [c+1] R M$ is a Golod module.
\end{cor}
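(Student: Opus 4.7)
The plan is to combine the structural result on higher syzygy complexes (Corollary \ref{high_ai_mod_given_by_ring}) with the characterization of Golodness via minimal \Ai-structures (Theorem \ref{char_of_golod_mods}(4)) and the characterization of Golod rings in Theorem \ref{golod_ring_thm}.

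First I would let $A \xra{\simeq} R$ and $G \xra{\simeq} M$ be the minimal $Q$-free resolutions, so that $G$ has finite length equal to $c = \pd_Q M$. Pick any \Ai-algebra structure $m$ on $A$ and any \Aia-module structure $m^G$ on $G$; these exist by Proposition \ref{ai_exist}. Since $R$ is Golod, Theorem \ref{golod_ring_thm} tells us that every \Ai-algebra structure on $A$ is automatically minimal, so $m_n \otimes k = 0$ for all $n \geq 2$.

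Next I would form the iterated syzygy complex $H := \csyz [c+1] R G$, equipped with the canonical \Aia-module structure $m^H$ produced by Proposition \ref{ai_structure_on_cone} and Definition \ref{defn_of_higher_syzygy_complex}. By construction, $H$ is a $Q$-free (in general non-minimal) resolution of $\syz [c+1] R M$. Because $G$ has length $c$, Corollary \ref{high_ai_mod_given_by_ring} applies: for every $n \geq 2$, the structure map $m_n^H$ is assembled from $m_n$'s coming out of the \Ai-algebra $A$ only, with the $m_n^G$'s having been absorbed into the differential $m_1^H$. Combined with the minimality of each $m_n$ for $n \geq 2$, this forces $m_n^H$ to be minimal for all $n \geq 2$, i.e.\ $m^H$ is a minimal \Aia-module structure on the $Q$-free resolution $H$ of $\syz [c+1] R M$.

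Finally, the conclusion follows directly from condition (4) of Theorem \ref{char_of_golod_mods}: the existence of \emph{some} (not necessarily minimal) $Q$-free resolution of $\syz [c+1] R M$ carrying a minimal \Ai-module structure is equivalent to $\syz [c+1] R M$ being Golod. The only step requiring any care is verifying that the iterative formula of Proposition \ref{ai_structure_on_cone} really does propagate minimality through all $c+1$ syzygy steps, and this is precisely the content of Corollary \ref{high_ai_mod_given_by_ring}, so no additional obstacle arises.
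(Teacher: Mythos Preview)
Your proof is correct and follows essentially the same approach as the paper: use Theorem \ref{golod_ring_thm} to get a minimal \Ai-structure on $A$, invoke Corollary \ref{high_ai_mod_given_by_ring} to obtain a (possibly non-minimal) $Q$-free resolution of $\syz[c+1] R M$ carrying a minimal \Aia-module structure, and conclude via condition (4) of Theorem \ref{char_of_golod_mods}. Your write-up is simply a more detailed unpacking of the paper's three-line argument.
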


\begin{proof}
By the above theorem, $A$ has a minimal \Ai-structure. Thus by \ref{high_ai_mod_given_by_ring} there is a
(possibly non-minimal)
$Q$-free resolution of $\syz [c+1] R M$ with a minimal \Aia-module structure, and so
by \ref{char_of_golod_mods}.(4), it is a Golod module.
\end{proof}

By Theorem \ref{golod_ring_thm}, to show $R$ is Golod it is enough to show that $\o m_n = 0$ for all
 $n$. This is connected to a folklore question. There is an isomorphism of
graded $k$-algebras $H_*( K
\otimes R ) \cong \o A$, where $K$ is the Koszul complex on $\fn$, the
maximal ideal of $Q$. There is no known
example of a non-Golod ring $R$ with trivial multiplication on $H_*(K
\otimes R)$. The question is whether such non-Golod rings
exist. In terms of \Ai-algebras, this is the following.

\begin{quest}
Let $R = Q/I$ be a local ring, with $(Q, \fn)$ a
regular local ring and $I \subseteq \fn^2$. Let $A$ be the minimal
$Q$-free resolution of $R$ with \Ai-algebra structure $(m_n)$. If $\o m_2 = 0$, is $\o m_n = 0$ for all $n \geq 2$?
\end{quest}

By \cite{MR2344344}, this is true when $Q$ is a graded polynomial ring
and $I$ a monomial ideal.

\begin{ex}
Theorem \ref{golod_ring_thm} gives an easy proof of a result of
Shamash \cite{MR0241411}. Let $(Q, \fn)$ be a regular local ring, $J$ be an ideal in $Q$, and $f \in \fn$ a nonzero element. Then
$R = Q/(f\cdot J)$ is
 Golod. Indeed, let $B \xra{\simeq} Q/J$
be the minimal $Q$-free resolution. Since $f$ is
 a non-zero divisor, multiplication by $f$ gives an isomorphism of
 $Q$-modules 
\[ J \xra{ \cong} f \cdot J.\] 
Thus we can construct a minimal $Q$-free resolution of
 $R$ by setting $\p A = \p B$, $A_0 = Q$ and $d_1^A =
 f \cdot d_1^B$. If $m^B_n: \p
 B[1]^{\otimes n} \to \p B[1]$ is an
 \Ai-algebra structure on $B$, then
\[m_n^A = f^{n-1}
 m^B_n: \p A[1]^{\otimes n} \to \p A[1]\] is an \Ai-algebra structure on $A$. In particular,
 $m_n^A$ is minimal for all $n$, so $R$ is Golod.
\end{ex}

\begin{cor}
If $R$ is a Golod local ring, then one can construct the minimal $R$-free resolution
of every finitely generated module in finitely many steps.
\end{cor}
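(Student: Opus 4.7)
The plan is to combine Corollary \ref{syzs_over_golod_ring_are_golod} with Theorem \ref{char_of_golod_mods} and the bar construction of Theorem \ref{proj_resln_thm}. Let $M$ be a finitely generated $R$-module and set $c = \pd_Q M$, which is at most $\dim Q$ since $Q$ is regular. First, I would iteratively compute the initial segment $F_c \to F_{c-1} \to \cdots \to F_0 \to M$ of the minimal $R$-free resolution of $M$ by taking minimal free covers and kernels over the Noetherian ring $R$; this is a finite linear-algebra computation at each step. The $(c+1)$-st syzygy $N := \syz[c+1] R M$ that emerges is, by Corollary \ref{syzs_over_golod_ring_are_golod}, a Golod $R$-module. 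It thus suffices to construct the minimal $R$-free resolution of $N$ in finitely many steps and splice it onto this initial segment.

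Next, I would assemble the \Ai-data needed for the bar construction. Let $A$ be the minimal $Q$-free resolution of $R$ and $G'$ the minimal $Q$-free resolution of $N$; both have length at most $\dim Q$. By Theorem \ref{golod_ring_thm}, $A$ admits a minimal \Ai-algebra structure $(m_n)$, and by Theorem \ref{char_of_golod_mods} \emph{every} \Aia-module structure on $G'$ is automatically minimal since $N$ is Golod. Proposition \ref{ai_exist} provides an inductive procedure for constructing both structures by lifting null-homotopic maps across bounded $Q$-projective resolutions of finitely generated modules, a finite computation at each stage. Moreover, by degree considerations only finitely many $m_n$ and $m_n^{G'}$ can be non-zero: the source $\p A[1]^{\otimes n}$ is concentrated in degrees $\geq 2n$ while $\p A[1]$ lives in degrees $\leq \dim Q + 1$, so $m_n = 0$ once $n$ is sufficiently large, and similarly for $m_n^{G'}$. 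The entire \Ai-data is therefore finite and explicitly computable.

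Applying Theorem \ref{proj_resln_thm} to $A$, $G'$, and these \Ai-structures yields the bar resolution $R \otimes \Bar A \otimes G' \xra{\simeq} N$, whose differentials are prescribed finite combinations of the $m_n$ and $m_n^{G'}$. By Theorem \ref{char_of_golod_mods} this bar resolution is minimal, hence is the minimal $R$-free resolution of $N$. Splicing it onto the initial segment produced in the first step (identifying the degree-zero part of the bar resolution with the minimal $R$-free cover of $N$) yields the minimal $R$-free resolution of $M$ in its entirety.

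The main obstacle, and the key conceptual input, is the transfer of Golodness from $R$ to the deep syzygy $N$ provided by Corollary \ref{syzs_over_golod_ring_are_golod}; once this is in hand, every remaining step reduces to finite linear algebra over $Q$ thanks to the length bounds on $A$ and $G'$. A pleasant feature of the approach is that no separate minimization step is required at the end: the Golod hypothesis promotes any chosen \Aia-module structure on $G'$ to a minimal one, and hence the bar resolution comes out minimal automatically.
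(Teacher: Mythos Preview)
Your proposal is correct and follows essentially the same route as the paper: pass to a high enough syzygy $N$ so that Corollary~\ref{syzs_over_golod_ring_are_golod} guarantees $N$ is Golod, then invoke Theorem~\ref{char_of_golod_mods} to conclude that the bar resolution of Theorem~\ref{proj_resln_thm} on finite $Q$-free data is the minimal $R$-free resolution of $N$. Your write-up is more explicit about the finiteness of the \Ai-data and the splicing of the initial segment, but the argument is the same.
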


\begin{proof}
Let $M$ be a finitely generated $R$-module. By \ref{syzs_over_golod_ring_are_golod}, a syzygy
$N$ of $M$ is a Golod module. We
can then construct the finite minimal $Q$-free resolutions of $R$ and $N$, and
the finitely many maps in \Ai-structures on these. By
\ref{char_of_golod_mods}, 
the resolution \ref{proj_resln_thm} using these \Ai-structures is the minimal resolution of
$N$.
\end{proof}
\def\cprime{$'$}

\end{document}